
\documentclass[10pt]{article}

\usepackage{amsmath,amssymb,amsfonts,amsthm,bm,mathabx}

\usepackage[english]{babel} 
\usepackage[utf8]{inputenc} 
\usepackage[T1]{fontenc}    
\usepackage{lmodern}

\usepackage[numbers]{natbib}
\usepackage{csquotes}

\usepackage{authblk}
\usepackage{graphicx}
\usepackage{multirow}
\usepackage{color}
\usepackage{hyperref}
\usepackage{cleveref}
\usepackage{dsfont}
\usepackage{times}
\usepackage{float}
\usepackage[a4paper, margin=2.5cm]{geometry}
\usepackage{algorithm2e}
\usepackage{appendix}
\usepackage{subfig}
\usepackage{listings}

\newtheorem{rmk}{Remark}[section]

\newtheorem{rmkAppndx}{Remark}
\newtheorem{prpAppndx}{Proposition}



        %
\newcommand{\df}{{\rm d}}               

\newcommand{\R}{\mathbb{R}}
\newcommand{\Z}{\mathbb{Z}}

\newcommand{\N}{\mathbb{N}}
\newcommand{\Q}{\mathbb{Q}}



\newcommand{\Ms}{\mathsf{M}}
\newcommand{\Ns}{\mathsf{N}}

\newcommand{\as}{\mathsf{a}}


\newcommand{\gs}{\mathsf{g}}

\newcommand{\is}{\mathsf{i}} 

\newcommand{\ms}{\mathsf{m}}
\newcommand{\ns}{\mathsf{n}}


\newcommand{\vs}{\mathsf{v}}

\newcommand{\xs}{\mathsf{x}}

\renewcommand{\Im}{\operatorname{Im}}
\renewcommand{\Re}{\operatorname{Re}}

\begin{document}

\title{A recursive method for computing singular solutions in  corners with homogeneous Dirichlet-Robin boundary condition with power-law coefficient variation}

\author[1]{Nicolás Piña-León \thanks{Corresponding author. Email: jpinna@us.es}}
\author[2]{Vladislav Manti\v{c} \thanks{Email: mantic@us.es}}
\author[3,4]{Sara Jim\'enez-Alfaro \thanks{Email: 
 s.jimenez-alfaro@imperial.ac.uk}}

\affil[1]{\small Instituto de Matemáticas de la Universidad de Sevilla (IMUS)\\ Edificio Celestino Mutis, Avda. Reina Mercedes  s/n, 41012, Sevilla, Spain}
\affil[2]{\small Departamento de Mecánica de Medios Continuos y Teoría de Estructuras, Escuela Técnica Superior de Ingeniería, Universidad de Sevilla\\ Camino de los Descubrimientos s/n, 41092, Sevilla, Spain}
\affil[3]{Department of Civil and Environmental Engineering, Imperial College London\\ London, SW7 2AZ, UK}
\affil[4]{Department of Engineering Science, University of Oxford, Oxford, OX1 3PJ, UK}

\date{\today} 

\maketitle


\begin{abstract}
This study introduces a recursive method for computing asymptotic solutions of the Laplace equation in corner domains with the homogeneous Dirichlet boundary condition on one side and the Robin boundary condition with a power-law coefficient variation with exponent $\alpha\in\R$  on the other side (D-R corner problem). An asymptotic solution of this D-R corner problem is given as the sum of a main term, the solution of either a homogeneous Dirichlet-Neumann (D-N) or Dirichlet-Dirichlet (D-D) corner problem, and a finite or infinite series of the associated higher-order shadow terms by using harmonic basis functions with power-logarithmic terms.  To determine this series of shadow terms, it is shown that the recursive procedures based on recursive non-homogeneous D-N or D-D corner problems are always convergent for $\alpha>-1$ or $\alpha<-1$, respectively. For the critical case $\alpha=-1$, the closed form expression of the asymptotic solution is given. Asymptotic solutions for several relevant D-R corner problems are derived and analysed. Two of these examples are applied to the problem of bridged cracks in antiplane Mode III in linear elastic fracture mechanics. The results presented can be applied to many other physical and engineering applications, such as heat transfer with the thermal resistance condition, acoustics and electrostatics with the impedance condition, and elasticity and structural analysis with the Winkler spring boundary condition.
\end{abstract}

\textbf{Keywords:} Laplace equation, harmonic functions, corner domain, angular sector, corner singularity, boundary singularity,  asymptotic solution, singular eigensolution, Dirichlet-Robin boundary condition, main and shadow terms, power-logarithmic terms 


\section{Introduction}

Consider a linear elliptic boundary value problem (BVP) in a plane domain with one or several corners on the boundary, also called angular points. As follows from the seminal works by Kondratiev \cite{Kondratiev1967} and Costabel and Dauge \cite{CostabelDauge1993}, the asymptotic solution of such a BVP near the corner vertex includes a linear combination  of  singular eigensolutions, or simply singularities, of a homogeneous elliptic BVP  in an infinite corner (angular sector). See   \cite{Grisvard1985, KufnerSandig1987, Dauge1988, Nicaise1993,Grisvard1992, NazarovPlamenevsky1994, KozlovMazyaRossmann1997, KozlovMazyaRossmann2001} for a large number of  relevant mathematical results regarding elliptic BVPs with singularities on the domain boundary.
In general, the asymptotic solution of an elliptic BVP, denoted as  $u$, near a corner  can be decomposed into a singular part, denoted as $u_\text{S}$, given by a linear combination of a few most singular eigensolutions (often taking just one term), and a regular part, denoted as $u_\text{R}$, given by an infinite series of more regular (i.e. less singular) eigensolutions. These functions are harmonic and near the corner, the solution exhibits reduced regularity compared to the interior and belongs to certain fractional Sobolev spaces; see~\cite{Grisvard1985,Grisvard1992,Mghazli1992} for further details. 
One of the key issues in the analysis of singularities in elliptic BVPs and their applications is the derivation of the closed-form expressions of these singular eigensolutions, and then an accurate calculation of the coefficients that multiply the most singular eigensolutions included in $u_\text{S}$, usually referred to as flux intensity factors in scalar elliptic BVPs and stress intensity factors in elastic BVPs. These flux or stress intensity factors often represent key magnitudes for applications, e.g., in fracture mechanics, since they appear in fracture criteria for crack formation and propagation.


%

The analysis of the asymptotic singular behavior of solutions of elliptic BVPs near  corner  points on the boundary is of great importance for many engineering applications, e.g., fracture and contact mechanics, fluid mechanics and heat transfer, see  \cite{Leguillon1987,Yosibash2012} where many engineering problems in domains with corner singularities on the boundary were studied.

Numerical analysis of elliptic BVPs with boundary singularities is very relevant for applications because the presence of such singularities in the solution of a BVP can significantly reduce the convergence rate of the numerical solution, e.g., by Finite Element Method (FEM), as shown in the pioneering works by Babu\v{s}ka \cite{Babuska1970} and Strang and Fix \cite{StrangFix1973}, and reviewed in a very comprehensive way in \cite{SzaboBabuska1991,BrennerCarstensen2004}. A large number of methods have been proposed to deal with this difficulty and to achieve an optimal convergence rate despite the presence of a singularity in the BVP solution. Many of these approaches are based on knowledge of the structure of the singular part $u_\text{S}$, especially the closed form of the most singular eigensolutions. 

Recently,   BVPs with the so-called Robin boundary conditions  have attracted  a considerable attention in mathematical literature, see \cite{SayasVariational, Medkova2018} for comprehensive studies of such BVPs and their variational formulations, because this type of boundary condition is very relevant for many applications.    Therefore,   the present work deals with derivation of such singular eigensolutions for  elliptic BVPs with the  Robin boundary condition.  Note that, the nomenclature regarding Robin boundary condition is not unique, and different names are used for this type of boundary condition, mainly depending on the application, e.g., Fourier or Kapitza boundary condition or thermal boundary resistance condition are  used in heat transfer, impedance boundary condition  in electromagnetism and acoustic, and Winkler spring boundary condition in elasticity and structural analysis. 

Although the literature on singularities in corners with Robin boundary condition is rather scarce, several relevant results have been obtained in the last decades, showing that the problem of corner singularities with this boundary condition is actually very tricky due to a  linear combination of the solution $u$ and its first derivatives. This leads to a peculiar and much more complex structure of each singular eigensolution compared to corner singularities with standard Dirichlet and Neumann boundary conditions.

Mghazli~\cite{Mghazli1992} analysed the regularity of harmonic solutions in plane corners with nonhomogeneous Dirichlet-Robin (D-R) and Robin-Robin (R-R) boundary conditions and deduced rather complicated closed form expressions for singular eigenfunctions  for constant coefficients in the Robin condition.
Costabel and Dauge~\cite{CostabelDauge1996} analysed the asymptotic behaviour of the harmonic solutions near the singular point of change of the homogeneous Robin to the Neumann boundary condition on a smooth boundary, characterising the change of the nature of the local solution behaviour, from the logarithmic to the square root singularity,  when the coefficient multiplying the normal derivative vanishes.

Sinclair~\cite{Sinclair1996,Sinclair1999,Sinclair2004,Sinclair2009,Sinclair2015}   applied  Williams~\cite{Williams1952} approach and a recursive procedure to derive expressions of the double asymptotic series,  including the main and so-called shadow terms, for several relevant particular cases of corners with Robin boundary conditions,  for both the Laplace and plane-elastic BVPs. 
Mishuris~\cite{Mishuris1999,Mishuris2001,MishurisKuhn2001} applied the Mellin transform to characterise singular eigensolutions in some specific Laplace and plane-elastic BVPs with Neumann-Robin boundary conditions with power-law variation of the coefficient, corresponding to a crack either perpendicular to or located along a straight Robin-type interface.
Antipov et al.~\cite{Antipov2001} obtained exact solutions of Laplace equation for some  problems of a semi-infinite crack along a straight Robin  interface, deducing asymptotic behavior of the solutions at the crack tip. 
Lenci~\cite{Lenci2001} derived, for an elastic BVP in an infinite plane, an integral equation for a finite crack on a Robin interface, showing the presence logarithmic singularity at the crack tip observed also in~\cite{Antipov2001}.

Ueda et al.~\cite{Ueda2006} derived recursive formulas for a series representation of the singular eigensolutions of the Laplace equation with Dirichlet-Robin boundary condition on a straight boundary with the coefficient variation given by a linear combination of a constant term and a term  inversely proportional to the distance from the singular point. 
These authors have generalized their approach in~\cite{Watanabe2007} to derive recursive formulae for a series representation of the singular eigensolutions of the Laplace and elastic BVPs considering two dissimilar materials joined by a straight interface, one part of which is a perfect interface and the other part is of the Robin type, with the coefficient variation given by a linear combination of a constant term and a term inversely proportional to the distance from the singular point where these parts meet.
Wu~\cite{Wu2017} derived an exact solution using conformal mapping technique  of a BVP for the Laplace equation in a half-plane with Robin boundary condition on a finite segment  with the coefficient  inversely proportional to the square root of the distance from the segment end-points,  and the homogeneous Dirichlet boundary condition on the rest of the boundary.

Jim\'enez-Alfaro et al.~\cite{Jimenez-Alfaro2020} derived expressions for singular eigensolutions of homogeneous Dirichlet-Robin and Neumann-Robin BVPs for the Laplace equation in infinite corners in the form of asymptotic series given by a main term and a finite or infinite series of the so-called shadow terms. These power-logarithmic shadow terms are defined by solving recursive linear systems. In~\cite{Jimenez-Alfaro2023} this approach was applied to an especially relevant Neumann-Robin BVP in the half-plane domain, leading to the development of a new crack-tip finite element for logarithmic singularities to model cracks propagating along   Robin interfaces in~\cite{Mantic2024}.

In the present work, we address the derivation of  singular eigensolutions of the Laplace equation in an infinite corner domain (angular sector)   
  with mixed homogeneous boundary conditions: Dirichlet  boundary condition on one side, and Robin  boundary condition with power law variation of the coefficient on the other side of the corner. 
Inspired mainly by the approaches developed in~\cite{Mghazli1992,CostabelDauge1996} and especially that in~\cite{Jimenez-Alfaro2020}, we propose two original and general recursive procedures to derive expressions for the singular eigensolutions  in the form of finite or infinite asymptotic series.  In these recursive procedures, the Robin boundary condition is replaced by non-homogeneous recursive Neumann  or Dirichlet boundary conditions. An advantage of these recursive procedures is that they are very suitable for computational implementation in computer algebra software, which is substantial, as the complexity of the shadow terms is in general increasing with the shadow term order.
A complete study of these series is presented, proposing conditions under which the series is finite or infinite, depending on the values of the problem parameters: the inner angle of the corner and the exponent in power-law variation of the coefficient in the Robin boundary condition. The energy of the singular eigensolution is analyzed, and also the error of a truncated asymptotic series. 
The main results are summarized in  tables, with the mathematical arguments given in the appendices. Then,   several representative examples of the singular eigensolutions represented by the asymptotic series  are studied, showing the behavior of the eigensolution, its  derivatives and errors, and analyzing the solution energy. 
Finally, the results of the present study are comprehensively presented for the case of the half-plane domain, because it actually models a crack bridged by a linear elastic spring distribution with power-law variation of the spring stiffness, which could have relevant applications in computational fracture mechanics.

For the sake of brevity, this article does not address the similar case of homogeneous Neumann-Robin boundary conditions  with power-law variation of the coefficient, which will be a topic of a forthcoming work.

The article is organized as follows. In Section 2, the D-R corner problem is introduced, observing certain singularities at the corner tip, for   some power-law variations of the coefficient in the Robin boundary condition. Then, in Section 3, the asymptotic series expansions for singular eigensolutions in these corners are deduced using a Dirichlet-Neumann (D-N) approach and a Dirichlet-Dirichlet (D-D) approach, considering a D-N homogeneous problem for main terms and non-homogeneous D-N for shadow terms, and performing an analogous procedure for the D-D approach. In addition, we analyze the error of these procedures, their convergence as series finite or infinite, and the energy of the system, which is summarized in tables.  Within this section, we study a special case of the power-law variation  of the coefficient in the Robin boundary condition for which the previous approaches do not converge.  
In Section 4, several representative examples of the application of both approaches are analyzed, together with a special case.  Finally, an application of the present results in modeling bridged cracks in fracture mechanics is briefly presented in Section 5. In the Appendix, we introduce some proofs about our methods, and we deduce an explicit recursive method for the calculation of the coefficient of the asymptotic series.	

\section{Definition of the Dirichlet-Robin corner problem}\label{sec:DR_definition}

The original motivation of the present work is the elastic anti-plane strain  problem in an infinite corner with a power-law variation of spring stiffness on one corner side and restrained movement on the other side. The out-of-plane displacement function $u=u_z(x,y)$  solves the Laplace equation in a corner domain $\Omega$  defined in polar coordinates  $(r,\theta)$ as
  \begin{equation}
   \Omega =  \{(r\cos{\theta}, r\sin{\theta}), r>0, 0 <\theta< \omega\}, \label{Omegadef}  
  \end{equation}
 where $0<\omega \leq 2\pi$ is  the inner angle of the corner. Let  the (open) boundary parts be defined by radial lines in polar coordinates as $\Gamma_{1} = \{(r,0), r>0 \}$,  with $\theta = 0$, and $\Gamma_{2} = \{(r\cos(\omega),r\sin(\omega)), r>0 \}$, with $\theta =\omega$.  

Then, this elastic BVP can be defined as follows, see  Fig. \ref{fig:EsquemaDR} for a scheme,
\begin{equation}\label{DR1}
    \begin{aligned}
        \Delta u & = 0, \quad  & \text{ in } &\Omega, \\
		u & = 0, \quad & \text{ on } & \Gamma_{1},   \\
		 \sigma_{\theta z} + K(r) u & =  0, \quad & \text{ on }  & \Gamma_{2},  
    \end{aligned}
\end{equation}
where the function 
$K(r) = K_0 \left(\frac{r}{a}\right)^{\alpha}$ 
describes the spring stiffness variation with  $r$, where the parameter $K_0$ is a reference stiffness value, $a$ is a reference length, and $\alpha\in\mathbb{R}$ is the exponent in the power-law variation of spring stiffness. The shear stress $\sigma_{\theta z}=\frac{G}{r}\frac{\partial u}{\partial \theta}$, where $G$ is the shear modulus of the bulk. 

The above elastic problem can be rewritten as the Dirichlet-Robin (D-R) BVP for the Laplace equation in an infinite corner domain (angular sector) $\Omega\subset \R^{2}$,  
  with mixed homogeneous boundary conditions: Dirichlet  boundary condition on one side, and Robin  boundary condition with power law variation of the coefficient on the other side of the corner.  It is convenient to prescribe the Dirichlet boundary condition  at $\theta=0$, for the sake of simplicity of the following derivations.
 
 Then, the singular eigensolutions are defined  as solutions of the following BVP in  $\Omega$:  
\begin{align}\label{DR}
	\Delta u &= 0 \quad \text{ in } \Omega, \nonumber \\
	u & = 0 \quad \text{ on } \Gamma_{1},    \\
	\frac{1}{r} \frac{\partial u }{\partial \theta} +  \gamma\, r^\alpha u & =  0 \quad \text{ on }  \Gamma_{2}, \nonumber  
\end{align} 
where $\alpha, \gamma\in \R$   are given real constants, with $\gamma = \frac{K_0}{G a^\alpha}>0$.
Notably, there is no remote boundary condition prescribed at infinity in the above problem, leading to non-uniqueness in its solution, and resulting in an infinite sequence of  singular eigensolutions.
%

If $\alpha>-1$ the Robin boundary condition in \eqref{DR} can be expressed as 
\begin{equation}
    \frac{\partial u}{\partial \theta} + \gamma r^{\alpha+1} u = 0 \quad \text{ on }  \Gamma_{2}.  \label{eq:BR-DN}
\end{equation}
whereas if $\alpha<-1$, the condition \eqref{eq:BR-DN} leads to a singularity for $r\rightarrow 0$, which is avoided by expressing the Robin boundary condition as 
\begin{equation}
    \frac{1}{\gamma r^{\alpha+1}} \frac{\partial u}{\partial \theta} + u = 0 \quad \text{ on }  \Gamma_{2}.  \label{eq:BR-DD}
\end{equation}
A particular case is $\alpha=-1$, for which a closed-form solution is obtained, as will be shown in the following sections.

\begin{figure}[H]
	\centering
    \includegraphics[scale=0.25]{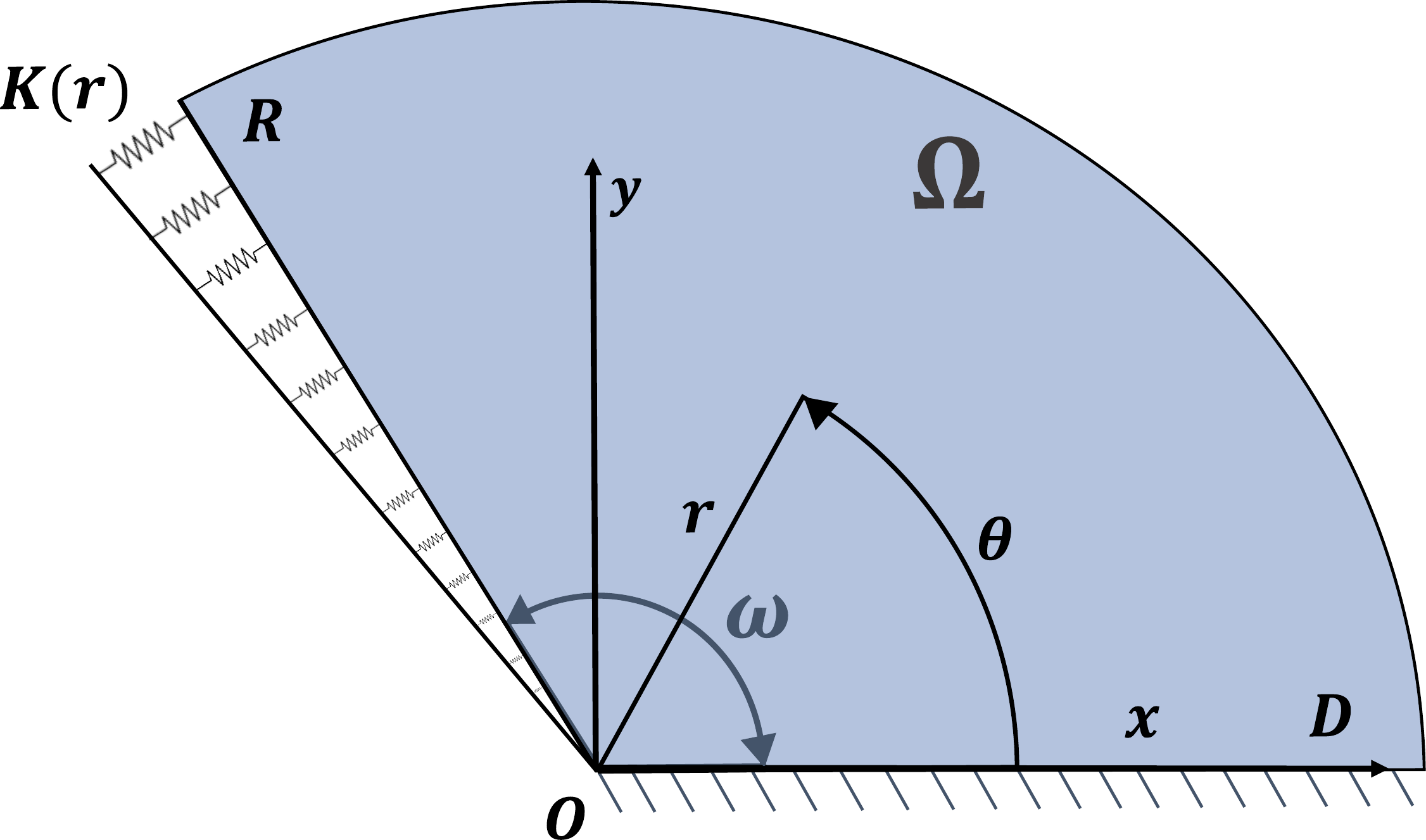}
	\caption{Scheme of a corner elastic problem with spring distribution of varying stiffness with $\alpha=-1$, where $D = \Gamma_{1}$ and $R = \Gamma_{2}$ denote the Dirichlet and Robin boundaries, respectively.}
	\label{fig:EsquemaDR}
\end{figure}


\section{Asymptotic series expansion for singular eigensolutions in Dirichlet-Robin corner problems}
\subsection{Corner BVPs}

A  $j$-th singular eigensolution $u_j$, for  $j\in \N$, of a   D-R    corner problem is given by the sum of a main term $u_j^{(0)}$ and a series of $S_{j}$ shadow terms  $u_{j}^{(k)}$,  for $k \in\{1,\ldots, S_{j}\}$, see  \cite{Mghazli1992,CostabelDauge1996,Jimenez-Alfaro2020},
\begin{equation}\label{eq:asympseries}
 u_j(r,\theta) = u_{j}^{(0)}(r,\theta) +  \sum_{k=1}^{S_{j}}u_{j}^{(k)}(r,\theta) . 
\end{equation}
%


The main term $u_{j}^{(0)}$ is obtained by solving the homogeneous Dirichlet-Neumann (D-N) problem obtained from \eqref{DR} by replacing the Robin boundary condition~\eqref{eq:BR-DN} by a homogeneous Neumann boundary condition. The eigensolutions of the D-N corner problem are easily obtained by the method of separation of variables   \cite{Yosibash2012}  giving 
\begin{equation}
	u_{j}^{(0)}(r,\theta) = r^{\lambda_{j}}\sin(\lambda_{j}\theta ),   \qquad\text{with } \lambda_{j} = (2j-1)\frac{\pi}{2\omega}, \quad j\in \N. 
\end{equation}
%
The shadow terms $u_{j}^{(k)}$, for $k\in\{1,2,\ldots,S_{j}\}$, are obtained by solving the following recursive non-homogeneous Dirichlet-Neumann (D-N) corner  problems
\begin{equation}
	\label{DN}
\begin{aligned}
\Delta u_{j}^{(k)} & = 0, & \qquad \text{in } & \Omega   \\
u_{j}^{(k)} & = 0, & \qquad \text{on } & \Gamma_{1} \\
\frac{1}{r}\frac{\partial u_{j}^{(k)}}{\partial \theta} & = -\gamma r^{\alpha}u_{j}^{(k-1)}, & \qquad \text{on } & \Gamma_{2},   
\end{aligned}
\end{equation}
where the condition on $\Gamma_{2}$ is a non-homogeneous Neumann condition, in which the right-hand side is defined either by the main term $u_{j}^{(0)}$, if $k =  1$, or by the previous shadow term $u_{j}^{(k-1)}$, if $k > 1$.

As will be seen, in the case of $\alpha > -1$, this error decreases in a neighborhood of the corner tip as we add shadow terms. However, in the case of $\alpha \leq -1$, it does not always converge; it depends on the inner angle of the corner $\omega$   and the power $\alpha$. 

To avoid this difficulty for $\alpha \leq -1$, the main term $u_{j}^{(0)}$ is obtained by solving the homogeneous Dirichlet-Dirichlet (D-D) problem obtained from \eqref{DR} by replacing the Robin boundary condition in the form \eqref{eq:BR-DD} by a homogeneous Dirichlet  boundary condition. The eigensolutions of the homogeneous D-D corner problem are obtained by the method of separation of variables%
\begin{equation}%
 u_{j}^{(0)}(r,\theta) = r^{\lambda_{j}}\sin(\lambda_{j}\theta), \quad \text{with } \lambda_{j} = j\frac{\pi}{\omega}, \quad j\in \N.   
\end{equation}%
Then, the shadow terms $u_{j}^{(k)}$, for $k\in\{1,2,\ldots,S_{j}\}$, are obtained by solving the following recursive non-homogeneous Dirichlet-Dirichlet (D-D) corner  problems, obtained from \eqref{DR} by  rewriting the Robin boundary condition in   the form \eqref{eq:BR-DD} 
\begin{equation}\label{DD}
    \begin{aligned}
    \Delta u_{j}^{(k)} & = 0, & \qquad \text{in } & \Omega\\
    u_{j}^{(k)} & = 0, & \qquad \text{on } & \Gamma_{1}\\
    \gamma r^{\alpha} u_{j}^{(k)}  & =  -\frac{1}{r}\frac{\partial u_{j}^{(k-1)}}{\partial \theta}, & \qquad \text{on } & \Gamma_{2}.
    \end{aligned}
\end{equation}
With this   recursive approach, similar to the previous one, the solution converges for the case $\alpha < -1$; but for the case $\alpha \geq -1$, it does not always converge and again depends on the values of $\omega$ and $\alpha$. 

It is important to highlight that expressions $u_{j}^{(0)}$ are valid for all $j\in \Z$. However, for $j\leq 0$ the   solution has infinite   energy in the neighborhood of the corner tip; therefore, we focus on $j\in \N$.

\begin{rmk}\label{RmkErr}
	It is easy to see that the main terms do not fulfill the Robin boundary condition. Thus, the shadow terms are added to diminish the error in this boundary condition on $\Gamma_{2}$.  
\end{rmk}

In summary, we have two recursive procedures to find the shadow terms given by equations~\eqref{DN} and~\eqref{DD} for  $\alpha > -1$  and $\alpha < -1$, see Tables~\ref{TblDN1} and~\ref{TblDD1}, respectively.  Nevertheless, there are also cases such that the convergence and finite energy hold for $\alpha < -1$ using D-N approach, and for $\alpha > - 1$ using D-D approach, see Tables~\ref{TblDN2} and~\ref{TblDD2}, respectively. Similar recursive procedures were previously developed in~\cite{Mghazli1992,CostabelDauge1996,Sinclair2004,Sinclair2015,Jimenez-Alfaro2020}. In the next sections, we will study both procedures with $\alpha \in \R$ analyzing the convergence and the local energy of the system, which will give us criteria under which to use one or another recursive procedure. But before that, we are going to study a special case in which neither of the two procedures converges.

\subsection{Special case \texorpdfstring{$\alpha = -1$}{a = -1} \label{sec:case_alpha-1}} 

As mentioned in Section~\ref{sec:DR_definition}, a particular and critical case is $\alpha = -1$, since the recursive procedures based on~\eqref{DN}  and~\eqref{DD} do not converge in this case, which will be deduced from the error analysis. However, we can easily find the exact closed-form expression for the singular eigensolutions by considering
\begin{equation} \label{eq:DR_alpha_1}
u_{j}(r,\theta) = r^{\lambda_{j}} \sin(\lambda_{j} \theta). 
\end{equation}
Since this $u_{j}(r,\theta)$    is a harmonic function and $u_j(r,0)=0$, the only condition to be fulfilled in \eqref{DR} is   the Robin boundary condition, leading to a  transcendental equation for $\lambda_j$,
\begin{equation}\label{trascendental}
	\tan(\lambda_{j}\omega) + \frac{\lambda_{j}}{\gamma} = 0,  \qquad \text{with }(2 j - 1) \frac{\pi}{2\omega} < \lambda_j < j\frac{\pi}{\omega}, \text{for all } j\in \N.
\end{equation}
This equation has infinite roots $\lambda_j$, but only approximate numerical solutions can be found.
It is worth mentioning that only $j\in \N$ are taken to avoid infinite strain energy, as  was explained before.

\subsection{General expressions for the shadow terms}

Taking into account the general results on singular solutions for elliptic BVPs developed in~\cite{Kondratiev1967,CostabelDauge1993}, see also \cite{Grisvard1985, KufnerSandig1987, Dauge1988, Grisvard1992, Nicaise1993, NazarovPlamenevsky1994, KozlovMazyaRossmann1997, KozlovMazyaRossmann2001}, and adapting  
   the     expression for the shadow terms used in \cite{Jimenez-Alfaro2020} for  $\alpha=0$
\begin{equation} \label{ExpBBasica}
	u_{j}^{(k)} (z) = \sum_{l=0}^{L_{j,k}} a_{j,k}^{(l)} \Im \left\{ z^{\lambda_{j} + k} \log^{l}(z)  \right\} + b_{j,k}^{(l)} \Re\left\{ z^{\lambda_{j} + k} \log^{l}(z)  \right\},  
\end{equation}
to the present case $\alpha\in\R$,  we propose the following general expression for the shadow terms  
\begin{equation} \label{ExpBasica}
	u_{j}^{(k)} (z) = \sum_{l=0}^{L_{j,k}} a_{j,k}^{(l)} \Im \left\{ z^{\lambda_{j} \pm k(\alpha + 1)} \log^{l}(z)  \right\} + b_{j,k}^{(l)} \Re\left\{ z^{\lambda_{j} \pm k(\alpha + 1)} \log^{l}(z)  \right\},  
\end{equation}
where $z(r,\theta) = r e^{\is \theta}$ is a complex number, $a_{j,k}^{(l)}$ and $b_{j,k}^{(l)}$ are coefficients to be determined. The symbol $\pm$ denotes `+' and `-', respectively, for the recursive procedures based on \eqref{DN} and~\eqref{DD}. It is convenient that this series also covers the main terms, thus, we  set $L_{j,0} = 0$, $a_{j,0}^{(0)}=1$ and $b_{j,0}^{(0)}= 0$. 

In terms of the polar coordinates, the series~\eqref{ExpBasica} can be rewritten as 
\begin{equation}\label{ExpIm}
	\begin{split}
        u_{j}^{(k)} (r,\theta) = \sum_{l=0}^{L_{j,k}} & a_{j,k}^{(l)}  \Im \left\{ r^{\lambda_{j} \pm k(\alpha + 1)}e^{\is(\lambda_{j} \pm k(\alpha + 1) )\theta } (\log(r) + \is \theta )^{l}\right\} 
		\\ 
		+ \ & b_{j,k}^{(l)} \Re\left\{ r^{\lambda_{j} \pm k(\alpha + 1)} e^{\is(\lambda_{j} \pm k(\alpha + 1))\theta }(\log(r) + \is \theta )^{l}\right\}.
	\end{split} 
\end{equation}
Using binomial expansion  and $\is =  e^{\is\frac{\pi}{2}}$,  we deduce that
\begin{equation}
	\label{ExpRe_a}
	\begin{aligned}
	u_{j}^{(k)} (r,\theta)  = & \sum_{l=0}^{L_{j,k}} \left[  a_{j,k}^{(l)}\ r^{\lambda_{j} \pm k(\alpha + 1)} \sum_{m=0}^{l} \binom{l}{m} \log^{m}(r) \theta^{l-m}  \Im \left\{e^{ \is\left[ (\lambda_{j} \pm k(\alpha + 1))\theta  + \frac{\pi}{2} (l-m)\right] }  \right\}  \right.  \\
	&  + \  \left. b_{j,k}^{(l)} \ r^{\lambda_{j} \pm k(\alpha + 1)} \sum_{m=0}^{l} \binom{l}{m} \log^{m}(r) \theta^{l-m}  \Re \left\{e^{ \is\left[ (\lambda_{j} \pm k(\alpha + 1))\theta  + \frac{\pi}{2} (l-m)\right] }  \right\}  \right]\\ 
	  = & \ r^{\lambda_{j} \pm k(\alpha + 1)}\sum_{l=0}^{L_{j,k}}  \left[ a_{j,k}^{(l)} \sum_{m=0}^{l} \binom{l}{m} \log^{m}(r) \theta^{l-m}  \sin\left( (\lambda_{j} \pm k(\alpha + 1))\theta  + \frac{\pi}{2} (l-m)\right)   \  \right. \\
	 &  +\  \left. b_{j,k}^{(l)} \sum_{m=0}^{l} \binom{l}{m} \log^{m}(r) \theta^{l-m}  \cos\left( (\lambda_{j} \pm k(\alpha + 1))\theta  + \frac{\pi}{2} (l-m)\right) \right].
\end{aligned}	
\end{equation}

Note that the expression in~\eqref{ExpRe_a} takes the form of the solutions proposed in~\cite[Proposition A1, A2 and A3]{Mghazli1992}, for the recursive procedure based on \eqref{DN}, while it  takes the form of the solutions proposed in Propositions~\ref{AppPrp1},~\ref{AppPrp2}, and~\ref{AppPrpn} in the Appendix~\ref{AppC}, for the recursive procedure based on \eqref{DD}.


Evaluating the expression in~\eqref{ExpIm} at  $\theta = 0$ we get
 \[u_{j}^{(k)}(r,0) = \sum_{l=0}^{L_{j,k}} b_{j,k}^{(l)} r^{\lambda_{j} \pm k(\alpha + 1)} \log^l(r). \]
Thus,  it can be concluded that all the coefficients $b_{j,k}^{(l)}$ are zero to verify the Dirichlet boundary condition on $\Gamma_{1}$, i.e., $u_{j}^{(k)}(r,0) = 0$ for all $r>0$.  Then, by interchanging the summation order, we get
\begin{equation}
	\begin{aligned}\label{ExpRe}
	u_{j}^{(k)} (r,\theta)  & =   r^{\lambda_{j} \pm k(\alpha + 1)}\sum_{l=0}^{L_{j,k}}  a_{j,k}^{(l)} \sum_{m=0}^{l} \binom{l}{m} \log^{m}(r) \theta^{l-m}  \sin\left( (\lambda_{j} \pm k(\alpha + 1))\theta  + \frac{\pi}{2} (l-m)\right)\\
	& = r^{\lambda_{j} \pm k(\alpha + 1)} \sum_{m=0}^{L_{j,k}}  \log^{m}(r) \sum_{l=m}^{L_{j,k}} a_{j,k}^{(l)}\binom{l}{m}  \theta^{l-m}  \sin\left( (\lambda_{j} \pm k(\alpha + 1))\theta  + \frac{\pi}{2} (l-m)\right).
\end{aligned}
\end{equation}


\subsection{Shadow terms computed by the recursive procedure based on  D-N BVPs}\label{SubsectCoeff_DN}

In this section we apply the recursive procedure defined by~\eqref{DN}. Considering~\eqref{ExpRe}, the Neumann boundary condition on $\Gamma_{2}$ leads to the following equation
\begin{align*}
&\frac{1}{r}\sum_{l=0}^{L_{j,k}} \frac{\partial}{\partial \theta}\left[ a_{j,k}^{(l)}\ r^{\lambda_{j} + k(\alpha + 1)} \sum_{m=0}^{l} \binom{l}{m} \log^{m}(r) \theta^{l-m}  \sin\left( (\lambda_{j} + k(\alpha + 1))\theta  + \frac{\pi}{2} (l-m)\right)  \right]_{\theta = \omega} \\ 
& =  -\gamma \sum_{l=0}^{L_{j,k-1}} a_{j,k-1}^{(l)} \ r^{\lambda_{j} + k(\alpha + 1) -1} \sum_{m=0}^{l} \binom{l}{m} \log^{m}(r) \omega^{l-m}  \sin\left( (\lambda_{j} + (k-1)(\alpha + 1))\omega  + \frac{\pi}{2} (l-m)\right)    
\end{align*}
or equivalently, interchanging the order of summation and simplifying the expression $r^{\lambda_{j} + k(\alpha + 1) -1}$  it holds 
\begin{align*}
	&\sum_{m=0}^{L_{j,k}} \log^{m}(r) \sum_{l=m}^{L_{j,k}} a_{j,k}^{(l)} \binom{l}{m} \omega^{l-m} \left[ \frac{(l-m)}{\omega} \sin\left( (\lambda_{j} + k(\alpha + 1))\omega + \frac{\pi}{2} (l-m)\right)  \right. \\ 
   & \hspace{4cm} +\  \left. (\lambda_{j}  + k(\alpha + 1))\cos\left( (\lambda_{j} + k(\alpha + 1))\omega + \frac{\pi}{2} (l-m)\right)\right]
   \nonumber\\ 
   & =   -\gamma \sum_{m=0}^{L_{j,k-1}} \log^{m}(r)  \sum_{l=m}^{L_{j,k-1}} a_{j,k-1}^{(l)} \binom{l}{m}  \omega^{l-m}  \sin\left( (\lambda_{j} + (k-1)(\alpha + 1))\omega  + \frac{\pi}{2} (l-m)\right).   
\end{align*}
Using angle sum identities we have 
\begin{equation}\label{SumSystDN}
\begin{split}
    &\sum_{m=0}^{L_{j,k}} \log^{m}(r) \sum_{l=m}^{L_{j,k}} a_{j,k}^{(l)} \binom{l}{m} \omega^{l-m} \left[ \frac{(l-m)}{\omega} \cos\left(  \omega k(\alpha + 1) + \frac{\pi}{2} (l-m)\right)  \right. \\ 
   & \hspace{4cm} - \left. (\lambda_{j}  + k(\alpha + 1))\sin\left( \omega k(\alpha + 1) + \frac{\pi}{2} (l-m)\right)\right] 
   \\ 
   &=  -\gamma \sum_{m=0}^{L_{j,k-1}} \log^{m}(r)  \sum_{l=m}^{L_{j,k-1}} a_{j,k-1}^{(l)} \binom{l}{m}  \omega^{l-m}  \cos\left( \omega (k-1)(\alpha + 1)  + \frac{\pi}{2} (l-m)\right).   
\end{split}
\end{equation}
Since, the previous polynomial expression should be verified for all $r>0$, coefficients $a_{j,k}^{(l)}$ are determined, obtaining one equation for each power of logarithmic terms. Hence, a system of $L_{j,k}+1$ equations is obtained,  
\begin{equation}\label{SystDN}
\bm{M}_{j,k} \bm{a}_{j,k} = \bm{g}_{j,k-1},
\end{equation}
where
\begin{align*}
	\bm{a}_{j,k} &= \left[ a_{j,k}^{(0)}, a_{j,k}^{(1)}, \ldots, a_{j,k}^{(L_{j,k})}  \right] ^\top \\
	\bm{g}_{j,k-1} &= \left[ g_{j,k-1}^{(m)} \right]^\top_{m=0,\ldots, L_{j,k-1}}\\
	\bm{M}_{j,k} &= \left[ \mu_{j,k}^{(m,l)}  \right]_{m = 0, \ldots, L_{j,k};\ l = m,\ldots, L_{j,k} }	 = 
	\begin{pmatrix} 
		\mu_{j,k}^{(0,0)} & \mu_{j,k}^{(0,1)}  & \cdots & \mu_{j,k}^{(0,L_{j,k})}  \\[1.5mm] 
		        0         & \mu_{j,k}^{(1,1)}  & \cdots & \mu_{j,k}^{(1,L_{j,k})}  \\[1.5mm] 
		        \vdots    &  \ddots            & \ddots & \vdots                   \\[1.5mm] 
				0		  &   0                & \cdots & \mu_{j,k}^{(L_{j,k},L_{j,k})}
	\end{pmatrix}
\end{align*}
with  
\begin{align*}
	g_{j,k-1}^{(m)}  & =-\gamma \sum_{l=m}^{L_{j,k-1}} a_{j,k-1}^{(l)} \binom{l}{m} \omega^{l-m} \cos\left( \omega (k-1)(\alpha + 1)  + \frac{\pi}{2} (l-m)\right), \\ 
	\mu_{j,k}^{(m,l)} & =  \binom{l}{m} \omega^{l-m} \left[ \frac{(l-m)}{\omega} \cos\left( \omega k(\alpha + 1) + \frac{\pi}{2} (l-m)\right) \ - \right. \\ 
	& \hspace{2cm} \left. (\lambda_{j}  + k(\alpha + 1))\sin\left( \omega k(\alpha + 1) + \frac{\pi}{2} (l-m)\right)\right]. 
\end{align*}

In Appendix~\ref{AppB}, we detail how the above set of equations can be recursively reformulated as the system~\eqref{RrcSystDN} which depends on the solution in the previous step \(\bm{a}_{j,k-1}\), facilitating its computational implementation.

The main diagonal terms of the upper triangular matrix $\bm{M}_{j,k}$ are given by
\[
\mu_{j,k}^{(m,m)} = -(\lambda_{j} + k(\alpha + 1))\sin\left( \omega k(\alpha + 1) \right).
\]
Thus, the system becomes inconsistent if
\begin{equation}\label{CondDN_1}
	\lambda_{j} + k(\alpha + 1) = 0
\end{equation}
or
\begin{equation}\label{CondDN_2}
	\sin\left( \omega k(\alpha + 1) \right) = 0,
\end{equation}
as these conditions result in zero diagonal entries. To ensure system consistency, we increment the system size by one, i.e., $L_{j,k} = L_{j,k-1} + 1$. The resulting system, comprising $L_{j,k-1} + 2$ equations, retains a matrix $\bm{M}_{j,k}$ with a first column and last row of zeros. This system is consistent because the last element of $\bm{g}_{j,k-1}$, denoted $g_{j,k-1}^{(L_{j,k-1}+1)}$, is zero. The conditions~\eqref{CondDN_1} and~\eqref{CondDN_2} are fulfilled for  
\begin{equation}\label{CondDNResumen}
	(\alpha + 1)\frac{\omega}{\pi} = - \frac{2j-1}{2k} \quad \text{ or }\quad(\alpha + 1) \frac{\omega}{\pi} = \frac{p}{k},\quad\text{ with } j,k \in \N,\ p\in \Z. 
\end{equation}
When the system is augmented, the coefficient $a_{j,k}^{(0)}$ is undefined. However, $a_{j,k}^{(0)}$ is a coefficient of purely power type singularity, with no logarithmic term. Therefore, its associated term is included in another shifted main term. Thus, we can set $a_{j,k}^{(0)} = 0$ and the asymptotic expansion will keep complete.  Later in   Section~\ref{CharAsympSeries} we will analyse the    conditions \eqref{CondDNResumen} in detail.

\begin{rmk}\label{Rmk1}
	If $\alpha > -1$, then $\lambda_{j}  + k(\alpha + 1) > 0$ for all $\omega\in(0,2\pi], k\in\N$ and $j\in \N$ in this case, and the series always converges in a neighborhood of zero. For the case $\alpha < -1$ the series does not always converge. In addition, if $\alpha=-1$,  it can be deduced from~\eqref{CondDN_2} that in every step $k$ we increase the system size; however, the series does not converge, as we will see in the next sections. The conditions~\eqref{CondDN_1} and~\eqref{CondDN_2} are not fulfilled at the same time, see Proposition~\ref{PrpB1} in Appendix~\ref{AppA}, this means that  the system is augmented only due to one condition for a shadow term.  
\end{rmk}
\subsection{Shadow terms computed by the recursive procedure based on D-D BVPs}\label{SubsectCoeff_DD}

In this section we apply the recursive procedure defined by~\eqref{DD}. To distinguish between the two recursive procedures, we use the \textit{Sans-serif} typography for the present case (i.e. $\bm{\as}_{j,k}$,  $\as_{j,k}^{(l)}$,   $\bm{\Ms}_{j,k}$, $\ms_{j,k}^{(m,l)}$, $\bm{\gs}_{j,k}$, $\gs_{j,k}^{(m)}$). The procedure is similar to the previous one, using the negative sign in~\eqref{ExpRe} and interchanging the non-homogeneous Neumann boundary condition by a non-homogeneous Dirichlet boundary condition on $\Gamma_{2}$, which results in the equation 
\begin{equation}\label{SumSystDD}
\begin{split}
    & \sum_{m=0}^{L_{j,k}} \log^{m}(r) \sum_{l=m}^{L_{j,k}} \as_{j,k}^{(l)} \binom{l}{m} \omega^{l-m} \sin\left(\frac{\pi}{2} (l-m) - k\omega (\alpha + 1)  \right) = \\ 
    -\frac{1}{\gamma} & \sum_{m=0}^{L_{j,k-1}} \log^{m}(r)  \sum_{l=m}^{L_{j,k-1}} \as_{j,k-1}^{(l)} \binom{l}{m}  \omega^{l-m}  \left[ \frac{(l-m)}{\omega} \sin\left(  \frac{\pi}{2} (l-m) - (k-1)\omega (\alpha + 1) \right) \ + \right. \\ 
   & \hspace{5cm} \left. (\lambda_{j}  - (k-1)(\alpha + 1))\cos\left( \frac{\pi}{2} (l-m) - \omega (k-1)(\alpha + 1)  \right)\right],
  \end{split}
\end{equation}
which leads to a system of $L_{j,k}+1$ equations that can be expressed as
\begin{equation}\label{SystDD}
\bm{\Ms}_{j,k} \bm{\as}_{j,k} = \bm{\gs}_{j,k-1},
\end{equation}
where
\begin{align*}
	\bm{\as}_{j,k} &= \left[ \as_{j,k}^{(0)}, \as_{j,k}^{(1)}, \ldots, \as_{j,k}^{(L_{j,k})}  \right] ^\top, \\
	\bm{\gs}_{j,k-1} &= \left[ \gs_{j,k-1}^{(m)}\right]_{m=0,\ldots,L_{j,k-1}}^\top, \\
	\bm{\Ms}_{j,k} &= \left[ \ms_{j,k}^{(m,l)}  \right]_{m = 0, \ldots, L_{j,k};\ l = m,\ldots, L_{j,k} }	 = 
	\begin{pmatrix} 
		\ms_{j,k}^{(0,0)} & \ms_{j,k}^{(0,1)}  & \cdots & \ms_{j,k}^{(0,L_{j,k})}  \\[1.5mm] 
		        0         & \ms_{j,k}^{(1,1)}  & \cdots & \ms_{j,k}^{(1,L_{j,k})}  \\[1.5mm] 
		        \vdots    &  \ddots            & \ddots & \vdots                   \\[1.5mm] 
				0		  &   0                & \cdots & \ms_{j,k}^{(L_{j,k},L_{j,k})}
	\end{pmatrix},
\end{align*}
with  
\begin{align*}\label{DfSystDD}
	\ms_{j,k}^{(m,l)}  & = \binom{l}{m} \omega^{l-m} \sin\left(\frac{\pi}{2} (l-m) - k\omega (\alpha + 1)  \right), \\  
	\gs_{j,k-1}^{(m)}    & = -\frac{1}{\gamma} \sum_{l=m}^{L_{j,k-1}} \as_{j,k-1}^{(l)} \binom{l}{m}  \omega^{l-m}  \left[ \frac{(l-m)}{\omega} \sin\left(  \frac{\pi}{2} (l-m) - (k-1)\omega (\alpha + 1) \right) \ + \right. \\ 
   & \hspace{4cm} \left. (\lambda_{j}  - (k-1)(\alpha + 1))\cos\left( \frac{\pi}{2} (l-m) - \omega (k-1)(\alpha + 1)  \right)\right]. 
\end{align*}
Notice that terms on the main diagonal of the upper triangular matrix $\bm{\Ms}_{j,k}$ are given by 
\[\ms_{j,k}^{(m,m)} = \sin\left( -k \omega (\alpha + 1) \right).\] Thus, when  
\begin{equation}\label{CondDD}
	\sin\left( k \omega (\alpha + 1) \right) = 0,
\end{equation}
the system is inconsistent. Similar to the D-N approach, we increase $L_{j,k}$  by one, i.e., $L_{j,k} = L_{j,k-1} + 1$ to obtain a consistent system (see also the system~\eqref{RrcSystDD}, where we reformulate the previous system as a recursive one).
Conditions in~\eqref{CondDD} are fulfilled when  
\begin{equation}\label{CondDDResumen}
	(\alpha + 1) \frac{\omega}{\pi} = \frac{p}{k},\quad\text{ with } j,k \in \N,\ p\in \Z. 
\end{equation}
If we augment  the system, then the coefficient $\as_{j,k}^{(0)}$ is undefined. However, $\as_{j,k}^{(0)}$  is related to another shifted main term, as explained in the previous section, and it can be set $\as_{j,k}^{(0)} = 0$, without losing the completeness of the asymptotic series. The   condition~\eqref{CondDDResumen} is analysed in detail in Section~\ref{CharAsympSeries}.

\begin{rmk}\label{Rmk2}
	If $\alpha > -1$, then the series does not always converge in a neighborhood of   zero. In addition, if $\alpha=-1$, then from~\eqref{CondDD} we deduce that in every step $k$ we increase the system, however the series is not convergent.  
\end{rmk}

\subsection{Error in the Robin boundary condition}\label{Errores}

As mentioned in Remark~\ref{RmkErr}, the main terms do not satisfy the Robin boundary condition, that is, the error in the recursive procedures is due to the main term on the Robin boundary. Shadow terms are harmonic functions that satisfy the homogeneous Dirichlet boundary conditions on $\Gamma_1$ and either the non-homogeneous Neumann or Dirichlet boundary conditions on $\Gamma_2$.  The characteristic recursive expressions for shadow terms on $\Gamma_2$ can be found in~\cite[Propositions A1, A2 and A3]{Mghazli1992} for D-N approach and propositions~\ref{AppPrp1},~\ref{AppPrp2} and~\ref{AppPrpn} in Appendix~\ref{AppC}, for D-D approach. These expressions allow us to show that the error in the Robin boundary caused by the main term diminishes or cancels. Therefore, we only need to measure the error on the Robin boundary.  

Based on the asymptotic series proposed in~\eqref{eq:asympseries}  for $u_{j}(r,\theta)$,
we define the absolute error for the D-N approach defined by~\eqref{DN} as  
\begin{equation}\label{ErrAbsDN}
   E_{DN}(r):=\frac{1}{r}\frac{\partial u_{j} (r,\omega) }{\partial \theta } +  \gamma r^{\alpha} u_{j}(r,\omega),
\end{equation}
whereas the relative error as
\begin{equation}\label{ErrRltDN}
	e_{DN}(r):= \frac{- \frac{1}{r}\frac{\partial u_{j}(r,\omega)}{\partial \theta }}{ \gamma r^{\alpha} u_{j}(r,\omega)} - 1.
\end{equation}
From the definition of the absolute error, thanks to recursive BVPs~\eqref{DN} and the solution  expression in~\eqref{ExpRe}, it holds
\begin{equation}\label{ErrCharDN}
\begin{split}
	\frac{1}{r}\frac{\partial u_{j}(r,\omega)}{\partial \theta } +  \gamma r^{\alpha} u_{j}(r,\omega)  &=
	\frac{1}{r}\frac{\partial u_{j}^{(0)}(r,\omega)}{\partial \theta } +  \gamma r^{\alpha} u_{j}^{(0)}(r,\omega)  +   \frac{1}{r}\sum_{k=1}^{S_{j}} \frac{\partial u_{j}^{(k)}(r,\omega)}{\partial \theta } +  \gamma r^{\alpha} \sum_{k=1}^{S_{j}}u_{j}^{(k)}(r,\omega),\\
	&=  \gamma r^{\alpha} u_{j}^{(0)}(r,\omega)  -   \gamma r^{\alpha}\sum_{k=1}^{S_{j}} u_{j}^{(k-1)}(r,\omega)
	+  \gamma r^{\alpha} \sum_{k=1}^{S_{j}}u_{j}^{(k)}(r,\omega),\\
	&= \gamma r^{\alpha} u_{j}^{(S_{j})}(r,\omega)\\	
	& =\gamma r^{\lambda_{j} + S_{j}(\alpha + 1) + \alpha} v_{j}^{(S_{j})}(r,\omega),
\end{split}
\end{equation}
where \[ v_{j}^{(k)}(r,\theta) := \sum_{m=0}^{L_{j,k}} \log^{m}(r) \sum_{l=m}^{L_{j,k}}  a_{j,k}^{(l)}\binom{l}{m} \theta^{l-m} \sin\left( (\lambda_{j} + k(\alpha + 1))\theta  + \frac{\pi}{2} (l-m)\right),\]  
which allows us to distinguish between the powers of $r$ and logarithmic polynomial in $r$ for a better error analysis. Hence,
\begin{equation}\label{E_DN}
E_{DN}(r) =\gamma r^{\lambda_{j} + S_{j}(\alpha + 1) + \alpha } v_{j}^{(S_{j})}(r,\omega),   
\end{equation}
with $\lambda_{j} = (2j-1)\frac{\pi}{2\omega}$. From \eqref{E_DN}  it is easy to conclude for $r>0$ that 
\begin{itemize}
	\item If $\alpha > -1$ and $S_{j}$ tends to infinity, then $E_{DN}(r)$ converges to zero as $r$ tends to zero. 
	\item If $\alpha < -1$ and $S_{j}$ tends to infinity, then $E_{DN}(r)$ diverges as $r$ tends to zero.
	\item If $\alpha = -1$ and $j > \frac{\omega}{\pi} + \frac{1}{2}$, then $S_{j}$ tends to infinity and $E_{DN}(r)$ converges to zero as $r$ tends to zero.  
\end{itemize}

It is easy to calculate that
\begin{equation}
	e_{DN}(r) = \frac{ - E_{DN}(r)}{\gamma r^{\alpha} u_{j}(r,\omega)}.
\end{equation}
Thus, for the case of the D-N approach we have
\begin{align}\label{errCharDN}
	e_{DN}(r)   & =  \frac{-u_{j}^{(S_{j})}(r,\omega)}{\sum_{k=0}^{S_{j}} u_{j}^{(k)} (r,\omega) } 
	=   \frac{-r^{\lambda_{j}+ S_{j}(\alpha +1)}  v_{j}^{(S_{j})}(r,\omega)}{\sum_{k=0}^{S_{j}} r^{\lambda_{j}+ k(\alpha +1)} v_{j}^{(k)} (r,\omega)}
	= \frac{-r^{S_{j}(\alpha +1)}  v_{j}^{(S_{j})}(r,\omega)}{(-1)^{j-1} + \sum_{k=1}^{S_{j}} r^{k(\alpha +1)} v_{j}^{(k)} (r,\omega)} 
\end{align}
Under suitable bounding conditions for $|a_{j,k}^{(l)}\binom{l}{m} \omega^{l-m}|$,  we observe that, for positive exponents, the power series of the denominator converge absolutely, as shown in Proposition~\ref{PrpB4} in   Appendix~\ref{AppA}. Then, it holds,
\begin{itemize}
	\item If $\alpha > -1$ and $S_{j}$ tends to infinity, then $e_{DN}(r)$ converges to zero as $r$ tends to zero. 
	\item If $\alpha < -1$ and $S_{j}$ tends to infinity, then $e_{DN}(r)$ diverges as $r$ tends to  zero. 
	\item If $\alpha = -1$, then $S_{j}$ tends to infinity and for all $S_{j}\in \N$, $e_{DN}(r)$ diverges as $r$ tends to zero. 
\end{itemize} 
 
Similarly,  
we define the absolute error for the D-D approach defined by~\eqref{DD} as  
\begin{equation}\label{ErrAbsDD}
	E_{DD}(r):= \gamma r^{\alpha}u_{j}(r,\omega) +  \frac{1}{r}\frac{\partial u_{j} (r,\omega) }{\partial \theta }    
\end{equation}
from which we get 
\begin{equation}\label{ErrCharDD}
\begin{split}
\gamma r^{\alpha} u_{j}(r,\omega) + \frac{1}{r}\frac{\partial u_{j}(r,\omega)}{\partial \theta }  &=
\gamma r^{\alpha} u_{j}^{(0)}(r,\omega)  +\frac{1}{r}\frac{\partial u_{j}^{(0)}(r,\omega)}{\partial \theta } +    \gamma r^{\alpha} \sum_{k=1}^{S_{j}}u_{j}^{(k)}(r,\omega)  + \frac{1}{r}\sum_{k=1}^{S_{j}} \frac{\partial u_{j}^{(k)}(r,\omega)}{\partial \theta },\\
    & =   \frac{1}{r}\frac{\partial u_{j}^{(0)}(r,\omega)}{\partial \theta }  -    \frac{1}{r}\sum_{k=1}^{S_{j}} \frac{\partial u_{j}^{(k-1)}(r,\omega)}{\partial \theta } +   \frac{1}{r}\sum_{k=1}^{S_{j}} \frac{\partial u_{j}^{(k)}(r,\omega)}{\partial \theta } \\
	& =\frac{1}{r} \frac{\partial u_{j}^{(S_{j})}(r,\omega)}{\partial \theta}\\
	& = r^{\lambda_{j} - S_{j}(\alpha + 1)-1} \frac{\partial \vs_{j}^{(S_{j})}(r,\omega)}{\partial \theta},
\end{split}
\end{equation}
where 
\begin{equation}
\vs_{j}^{(k)}(r,\theta) :=\sum_{m=0}^{L_{j,k}} \log^{m}(r) \sum_{l=m}^{L_{j,k}}  \as_{j,k}^{(l)}  \binom{l}{m} \theta^{l-m}  \sin\left( (\lambda_{j} - k(\alpha + 1))\theta  + \frac{\pi}{2} (l-m)\right),
\end{equation}
with $\lambda_{j} = j\frac{\pi}{\omega}$. Thus, the absolute error for D-D approach is given by 
\begin{equation}
E_{DD}(r) = r^{\lambda_{j} - S_{j}(\alpha + 1)- 1} \frac{\partial \vs_{j}^{(S_{j})}(r,\omega)}{\partial \theta} .
\end{equation}
From the previous expression and for $r>0$, it is easy to conclude that 
\begin{itemize}
	\item If $\alpha < - 1$ and $S_{j}$ tends to infinity, then $E_{DD}(r)$ converges to zero as $r$ tends to zero. 
	\item If $\alpha > -1$ and $S_{j}$ tends to infinite, then $E_{DD}(r)$ diverges as $r$ tends to zero. 
	\item If $\alpha = -1$ and $j > \frac{\omega}{\pi}$, then $S_{j}$ tends to infinity and $E_{DD}(r)$ converges to zero as $r$ tends zero. 
\end{itemize} 
The relative error, in this case, is defined by 
\begin{equation}
e_{DD}(r) :=   \frac{-\gamma r^{\alpha} u_{j}(r,\omega)}{\frac{1}{r} \frac{\partial u_{j}(r,\omega)}{\partial \theta}}  -1.
\end{equation}
From which we get 
\begin{equation} \label{errCharDD}
	e_{DD}(r)  =  \frac{-E_{DD}(r)}{\frac{1}{r} \frac{\partial u_{j}(r,\omega)}{\partial \theta }} = \frac{-\frac{\partial u_{j}^{(S_{j})}(r,\omega)}{\partial \theta } }{\frac{\partial u_{j}(r,\omega)}{\partial \theta }}
     =  \frac{- r^{\lambda_{j} - S_{j}(\alpha +1)}\frac{\partial \vs_{j}^{(S_{j})}(r,\omega)}{\partial \theta}}{\sum_{k=1}^{S_{j}} r^{\lambda_{j} -k(\alpha + 1)}  \frac{\partial \vs_{j}^{(k)}(r,\omega)}{\partial \theta} }
     =  \frac{- r^{- S_{j}(\alpha +1)}\frac{\partial \vs_{j}^{(S_{j})}(r,\omega)}{\partial \theta}}{\sum_{k=1}^{S_{j}} r^{ -k(\alpha + 1)}  \frac{\partial \vs_{j}^{(k)}(r,\omega)}{\partial \theta} }.
\end{equation}
Then, we conclude that
\begin{itemize}
	\item If $\alpha < -1$ and $S_{j}$ tends to infinity, then $e_{DD}(r)$ converges to zero as $r$ tends to zero. 
	\item If $\alpha > -1$ and $S_{j}$ tends to infinity, then $e_{DD}(r)$ diverges as $r$ tends to zero. 
	\item If $\alpha = -1$, then $S_{j}$ tends to infinity and for all $S_{j}\in \N$, $e_{DD}(r)$ diverges as $r$ tends to zero. 
\end{itemize} 
\begin{rmk}
	We are interested in errors that are smooth and continuous functions that vanish  in a neighbourhood of $r=0$. In this way, the solution is essentially exact near the corner tip, for a sufficiently small $r\geq 0$. In practice, however, we can consider only  truncated series with a finite number of shadow terms even if $S_{j}=\infty$.  The cases of finite and infinite $S_{j}$  will be studied below. 
	The convergence of errors for the D-N approach are summarized in Table~\ref{TblDN2}, for $\alpha <-1$ and the errors for the D-D approach are  summarized in Table~\ref{TblDD2}, for $\alpha >-1$. Recall, that  the case $\alpha = -1$ was analysed in Section \ref{sec:case_alpha-1}, thus, it is not considered in the following Section \ref{CharAsympSeries}.
\end{rmk}

\subsection{Characteristic of the asymptotic series}\label{CharAsympSeries}
In this section, the behavior of the asymptotic series expansion of the singular eigensolutions is analyzed, determining when the recursive procedure stops or not, i.e., when the series is infinite or not, as a function of $\omega$ and $\alpha$, 
see \cite{Jimenez-Alfaro2020}, 
for a similar procedure, cf. \cite{Mghazli1992,CostabelDauge1996,Sinclair1999,Sinclair2004,Sinclair2015}. This study is summarized in Tables~\ref{TblDN1}--\ref{TblDD2}.

If  $\frac{\omega}{\pi}(\alpha  + 1)$ is an irrational number
\begin{equation}\label{Hip-1}
	\frac{\omega}{\pi}(\alpha  + 1) \notin \Q,
\end{equation}
then the conditions in~\eqref{CondDN_1},~\eqref{CondDN_2} and~\eqref{CondDD} are never fulfilled.	
Therefore, $L_{j,k} = 0$ for all $j,k\in \N$, and the shadow terms do not include logarithmic terms, for both recursive approaches D-N and D-D. The series of shadow terms is infinite, i.e. $S_{j}=\infty$, as will be shown in Section~\ref{InfSerie}.

Under the assumption that  $\frac{\omega}{\pi}(\alpha  + 1)$ is a rational number
\begin{equation} \label{Hip-2}
	\frac{\omega}{\pi}(\alpha  + 1) \in \Q,
\end{equation}
we call such a pair of values $(\omega,\alpha)$  a \textit{critical pair}.  In this case, there are two possibilities, the series has a finite number of shadow terms (without logarithmic terms), then the pair $(\omega,\alpha)$ is called \textit{apparent critical pair}, whereas if the series is defined with an infinite number of shadow terms (with logarithmic terms), then $(\omega,\alpha)$ is called  \textit{actual critical pair}. 
In the case $\alpha =-1$, we have $\sin(k\omega(\alpha + 1))=  0$ and $\cos(k\omega(\alpha + 1))\neq 0$ for all $k\in \N$ and $\omega \in (0,2\pi]$, which means that, for both recursive approaches D-N and D-D, the series of shadow terms is infinite adding logarithmic terms for each $k\in \N$, details about the infinite series will be seen in Subsection~\ref{InfSerie}, thus $(\omega,-1)$ is an actual critical pair. 

In the following subsections the cases of $\alpha\neq - 1$ are deeply analyzed, studying the two recursive approaches D-N and D-D. 

\subsubsection{D-N approach}
As shown in Section~\ref{SubsectCoeff_DN}, the behavior of the series of shadow terms is described by conditions~\eqref{CondDN_1} and~\eqref{CondDN_2}. This means that when at least one condition is fulfilled, the system~\eqref{SystDN} increases its size (adding logarithmic terms to the series). For the case $\alpha >-1$, which is represented in Table~\ref{TblDN1}, the condition~\eqref{CondDN_1} is never achieved. We now distinguish two possible irreducible fractions from~\eqref{Hip-2}, 
see Remark~\ref{PrpB0} in Appendix~\ref{AppA}.  

First, if 
\begin{equation}\label{ApprtCritical}
 	\frac{\omega}{\pi}(\alpha + 1) = \frac{2p - 1}{2q} \in \Q, \text{ with } p,q\in \N,
 \end{equation}	
 then $L_{j,q} = 0$. Indeed, for $k \leq q$ it holds $\sin(k\omega(\alpha + 1 )) = \sin\left(\frac{k}{q}(2p-1)\frac{\pi}{2} \right) \neq 0$, see Proposition~\ref{PrpB0} in Appendix~\ref{AppA}, i.e., the condition~\eqref{CondDN_2} is not fulfilled, hence, the size of the system~\eqref{SystDN} does not increase. 
 Moreover, $u_{j}^{(q)}(r,\omega) = 0$.  In fact, due to  $L_{j,q} = 0$ and by~\eqref{ExpRe} we infer  
 \[u_{j}^{(q)}(r, \omega) = (-1)^{j-1} a_{j,q}^{(0)}\ r^{\lambda_{j} + q(\alpha + 1)} \cos\left((2p-1)\frac{\pi}{2} \right) =  0. \]
 Therefore, $E_{DN}(r) = 0$ and $e_{DN}(r) = 0$ with $S_{j} = q$. This means that,
 before increasing the size of the $k$-th shadow term, (i.e., when $k$ is a multiple of $2q$) the absolute and relative errors are zero. In other words, the series is finite with $q$ terms and contains no logarithmic terms. Consequently, $(\omega, \alpha)$ is an apparently critical pair.
 
 Second, if 
\begin{equation}\label{ActCritical}
	\frac{\omega}{\pi}(\alpha + 1) = \frac{p}{2q - 1} \in \Q, \text{ with } p,q\in \N,
\end{equation}
then  $\sin(k\omega(\alpha + 1 )) = 0$ for $k$ a multiple of $2q-1$ and $L_{j,k} = L_{j,k-1} + 1$; that is, the $k$-th shadow term is increasing its size, resulting in an infinite series with powers of logarithmic function. Consequently, $(\omega, \alpha)$ is an actual critical pair. In Subsection~\ref{InfSerie}, we show that the series of shadow terms is infinite, i.e., $S_j = \infty$.




In the case of  $\alpha < -1$, summarized in the Table~\ref{TblDN2}, we can make a similar analysis to previous case, although in this case the condition~\eqref{CondDN_1} can be fulfilled. It is assumed that either
\begin{equation}\label{NOActCritical}
	\frac{\omega}{\pi}(\alpha + 1) = -\frac{p}{2q-1} \in \Q, \text{ with } p,q\in \N,
\end{equation}
or 
\begin{equation}\label{NOApprtCritical}
	\frac{\omega}{\pi}(\alpha + 1) = -\frac{2p - 1}{2q} \in \Q, \text{ with } p,q\in \N.
\end{equation}

Under the assumption~\eqref{NOActCritical}, the condition~\eqref{CondDN_1} is never fulfilled.  Thus, we only take into account the condition~\eqref{CondDN_2} (see Remark~\ref{Rmk1})
and the analysis is the same as the previous one, i.e., $(\omega,\alpha)$ is an actually critical pair. 

On the other hand, assuming~\eqref{NOApprtCritical}, the condition~\eqref{CondDN_1} holds when 
\[ k = q\frac{(2j-1)}{2p-1},\quad \text{with $j,p,q\in \N$ fixed and $k\in \N$},\] 
and two cases can be  distinguished, as it is shown in Table~\ref{TblDN2}. First, if $j = p$, for $k = q$, the number of shadow terms is increased by one (see Proposition~\ref{PrpB1} in Appendix~\ref{AppA}) and the series of shadow terms is infinite, as will be explained later.

On the contrary, if $j \neq p$ we have two possibilities, $2j-1$ is not a multiple of $2p-1$, hence $k\notin\N$ and~\eqref{CondDN_1} are not verified. The other option is that $2j-1$ is a multiple of $2p-1$, in case that it holds at least for $j > p$, from this it follows that $k > 2q$. Thus, before increasing the system by condition~\eqref{CondDN_1} we will have already finished our recursive procedure. Therefore, from these last two possibilities, we only take into account the condition~\eqref{CondDN_2} and the analysis is the same as the case $\alpha > -1$, i.e., $(\omega,\alpha)$ is an apparently critical pair. Consequently, $E_{DN}(r) = 0$ and $e_{DN}(r) = 0$, with $S_{j} = q$, i.e., with $q$ shadow terms  the absolute and relative errors in the Robin boundary condition vanish. 

\subsubsection{D-D approach}
A similar analysis can be made for the D-D recursive approach, in which the condition~\eqref{CondDD} describes the behavior of the asymptotic series, as was  explained in Section~\ref{SubsectCoeff_DD}.
For the case of $\alpha > -1$, summarized in Table~\ref{TblDD2}, we consider two options from the assumption~\eqref{Hip-2}. 

First, if 
\begin{equation}\label{ApprtCriticalDD}
	\frac{\omega}{\pi}(\alpha + 1) = \frac{2p - 1}{2q} \in \Q, \text{ with } p,q\in \N,
\end{equation}	
then we have $L_{j,q} = 0$. Indeed, for $k \leq q$ the condition in~\eqref{CondDD} is  
not accomplished, see Proposition~\ref{PrpB0} in Appendix~\ref{AppA}. Moreover, $\frac{\partial u_{j}^{(q)}(r,\omega)}{ \partial \theta} = 0$.  In fact, due to  $L_{j,q} = 0$, and by~\eqref{ExpRe} we have 
\begin{align}
	u_{j}^{(q)}(r, \theta) & = \as_{j,q}^{(0)}\ r^{\lambda_{j} - q(\alpha + 1)} \sin \left((\lambda_{j} -q(\alpha  + 1))\theta \right), 
\end{align}
which leads to
\begin{align}
	\frac{\partial u_{j}^{(q)}(r, \omega)}{\partial \theta} & = (-1)^{j-1}\as_{j,q}^{(0)}\ r^{\lambda_{j} - q(\alpha + 1)} (\lambda_{j} -q(\alpha  + 1))\cos\left(q\omega(\alpha  + 1) \right), \\
	& = (-1)^{j-1}\as_{j,q}^{(0)}\ r^{\lambda_{j} - q(\alpha + 1)} (\lambda_{j} -q(\alpha  + 1))\cos\left( (2p-1) \frac{\pi}{2}\right)\\
	& = 0.
\end{align}
Therefore, $E_{DD}(r) = 0$ and $e_{DD}(r) = 0$ with $S_{j} = q$; this means that, before increasing the $k$-th shadow term (i.e., when $k$ is a multiple of $2q$), the absolute and relative errors are zero when using $q$ shadow terms. Consequently, $(\omega, \alpha)$ is an apparently critical pair.

Second, if 
\begin{equation}\label{ActCriticalDD}
	\frac{\omega}{\pi}(\alpha + 1) = \frac{p}{2q - 1} \in \Q, \text{ with } p,q\in \N,
\end{equation}
then, the condition~\eqref{CondDD} is fulfilled for $k$ a multiple of $2q-1$, i.e., 
the size of the $k$-th shadow term is increased. This case results in an infinite series  of shadow terms, as we will see later. Consequently, $(\omega,\alpha)$ is an actually critical pair. 

In the case of  $\alpha < -1$, reported in Table~\ref{TblDD1}, the analysis is same to previous one, although assumptions~\eqref{ApprtCriticalDD} and~\eqref{ActCriticalDD} are negatives.

\begin{rmk}
Note that the errors (relative and absolute) in the D-D recursive approach depend on $\frac{\partial u_{j}^{(S_{j})}(r,\omega)}{\partial \theta}$. From~\eqref{ExpRe} we deduce
\begin{equation}
	\begin{split}
	\frac{\partial u_{j}^{(S_{j})}(r,\omega)}{\partial \theta}  =	& \  r^{\lambda_{j} - S_{j}(\alpha + 1)}\sum_{l=0}^{L_{j,S_{j} }}  \as_{j,S_{j}}^{(l)}  \sum_{m=0}^{l} \binom{l}{m} \log^{m}(r) \omega^{l-m} \left[ \frac{l-m}{\omega}\sin\left( (\lambda_{j} - S_{j}(\alpha + 1))\omega + \frac{\pi}{2} (l-m) \right)\right. \\
	& \left.  + \ (\lambda_{j} - S_{j}(\alpha + 1))\cos\left( (\lambda_{j} - S_{j}(\alpha + 1))\omega + \frac{\pi}{2} (l-m) \right) \right].
	\end{split}
\end{equation}
Under the assumption that $L_{j,S_{j}} = 0$ we have 
\begin{equation}
\frac{\partial u_{j}^{(S_{j})}(r, \omega)}{\partial \theta}  = (-1)^{j-1}\as_{j,S_{j}}^{(0)}\ r^{\lambda_{j} - S_{j}(\alpha + 1)} (\lambda_{j} - S_{j}(\alpha  + 1))\cos\left(S_{j}\omega(\alpha  + 1) \right). \end{equation}
If $\lambda_{j} - S_{j}(\alpha  + 1) = 0$, then $	\sin\left( S_{j} \omega (\alpha + 1) \right) = 0$. Thus, $L_{j,S_{j}} = 1$ and  
\begin{equation}
\frac{\partial u_{j}^{(S_{j})}(r, \omega)}{\partial \theta}  = \as_{j,S_{j}}^{(1)}. 
\end{equation}
\end{rmk}

\subsubsection{Criteria for an infinite series of shadow terms}\label{InfSerie}

In this section we establish a condition under which the series of shadow terms is infinite. 
Let $c \in \N$. We perform separate analyses depending on whether $\bm{a}_{j,c-1} \neq \bm{0}$ or $\bm{\as}_{j,c-1} \neq \bm{0}$.
Due to the fact that the systems~\eqref{SystDN} and~\eqref{SystDD} can be set consistent, we have a unique solution by fixing, when necessary, the values of coefficients $a_{j,c}^{(0)}$ and $\as_{j,c}^{(0)}$ by zero, respectively, depending on the approach. Then, the recursive procedure stops if and only if the right-hand side of systems, mentioned before, is  null, i.e.,  $\bm{g}_{j,c-1}= \bm{0}$, for D-N recursive approach and $\bm{\gs}_{j,c-1}= \bm{0}$, for the D-D approach. In that case, $\bm{a}_{j,c}= \bm{0}$ and $\bm{\as}_{j,c}=\bm{0}$ respectively and therefore, in the next step, $\bm{g}_{j,c} = \bm{0}$ and $\bm{\gs}_{j,c}= \bm{0}$, which leads to a null solution of the systems.
\begin{equation}
\bm{M}_{j,c+1} \bm{a}_{j,c+1} = \bm{g}_{j,c} \qquad \text{ and } \qquad  \bm{\Ms}_{j,c+1} \bm{\as}_{j,c+1} = \bm{\gs}_{j,c}
\end{equation}
Therefore, $\bm{a}_{j,k} = \bm{0}$ and $\bm{\as}_{j,k}= \bm{0}$ for all $k\geq c$. See Appendix~\ref{AppB} for a more detailed explanation about dependency of the recursive system. 

For the D-N approach, we analyze the following particular cases
\begin{itemize}
	\item[a)] If $L_{j,c}= L_{j,c-1} = 0$, then $\bm{g}_{j,c-1} =[g_{j,c-1}^{(0)}]$, where
\begin{equation}
    g_{j,c-1}^{(0)} =- \gamma a_{j,c-1}^{(0)} \cos((c - 1) \omega (\alpha + 1) ).
\end{equation}
	If $\cos((c - 1) \omega (\alpha + 1) )=0 $, then $g_{j,c-1}^{(0)} = 0$.   
	\item[b)] If $L_{j,c}= L_{j,c-1} + 1 = 1$, then $\bm{g}_{j,c-1} =[g_{j,c-1}^{(0)},  0 ]^\top$
	where 
\begin{equation}
    g_{j,c-1}^{(0)} =-\gamma a_{j,c-1}^{(0)} \cos((c - 1) \omega (\alpha + 1) ).
\end{equation}
	If $\cos((c - 1) \omega (\alpha + 1) ) = 0$, then $\bm{g}_{j,c-1} = \bm{0}$.
	\item[c)] If $L_{j,c} = L_{j,c-1} = 1$, then $\bm{g}_{j,c-1} =[g_{j,c-1}^{(0)},\ g_{j,c-1}^{(1)}]^\top$
	where 
	\begin{equation}\label{CriticalVector}
		\bm{g}_{j,c-1} = -\gamma
		\begin{pmatrix} 
			a_{j,c-1}^{(0)}\cos((c-1) \omega(\alpha + 1) ) -  a_{j,c-1}^{(1)}\omega\sin((c-1) \omega(\alpha+1)) \\[1.5mm] 
			a_{j,c-1}^{(1)} \cos((c-1) \omega(\alpha+1) ) \\[1.5mm] 				
	\end{pmatrix}
	\end{equation}
	From this, it is easy to see that
	\begin{itemize}
		\item[$\bullet$] If $a_{j,c-1}^{(0)} \neq 0$ and $a_{j,c-1}^{(1)}\neq 0$, then $\bm{g}_{j,c-1}\neq \bm{0}$.
		\item[$\bullet$] If $a_{j,c-1}^{(0)} = 0$ and $a_{j,c-1}^{(1)}\neq 0$, then  $\bm{g}_{j,c-1}\neq \bm{0}$. 
		\item[$\bullet$]If  $a_{j,c-1}^{(0)}\neq 0$ and  $a_{j,c-1}^{(1)} = 0$, then   $\bm{g}_{j,c-1} =\bm{0}$, when $\cos((c-1) \omega(\alpha + 1) )=0$.		
	\end{itemize}
\end{itemize}
In general, the vectors $\bm{g}_{j,c-1}$ at least contain a subvector of the form shown in~\eqref{CriticalVector}. Therefore, these last three cases are sufficient for our purposes, from which we deduce that if
\[\cos\left((c - 1)\omega(\alpha + 1)\right) \neq 0,\]
then the series contains infinitely many terms, which  is straightforward to verify under the assumptions~\eqref{Hip-1},~\eqref{ActCritical}, and~\eqref{NOActCritical}.
Moreover, this condition also holds in the case \(\alpha = -1\). 
A special case is when we are under the assumptions~\eqref{NOApprtCritical} and  $\lambda_{j} + c(\alpha + 1) = 0 $ or equivalently $c = q\frac{(2j-1)}{2p-1}$ with $j = p$, from this it follows that $\cos((c-1)\omega(\alpha + 1) )\neq 0$ (see Proposition~\ref{PrpB2} in Appendix~\ref{AppA}), i.e.,  if~\eqref{NOApprtCritical} is fulfilled and $j = p$, then the series of shadow terms has infinite many terms. 

Similar to the previous approach, we analyze the following particular cases for the recursive approach D-D.
\begin{itemize}
	\item[a)] If $L_{j,c}= L_{j,c-1} = 0$, then $\bm{\gs}_{j,c-1} =[\gs_{j,c-1}^{(0)}]$, where
	\begin{equation}
    \gs_{j,c-1}^{(0)} =\frac{-1}{\gamma} \as_{j,c-1}^{(0)}(\lambda_{j} - (c-1)(\alpha + 1) ) \cos((c - 1) \omega (\alpha + 1) ).
    \end{equation}
	
	If $\cos((c - 1) \omega (\alpha + 1) )=0 $, then $\gs_{j,c-1}^{(0)} = 0$.   
	\item[b)] If $L_{j,c}= L_{j,c-1} + 1 = 1$, then $\bm{\gs}_{j,c-1} =[\gs_{j,c-1}^{(0)},  0 ]^\top$
	where 
    \begin{equation} 
    \gs_{j,c-1}^{(0)} = \frac{-1}{\gamma} \as_{j,c-1}^{(0)} (\lambda_{j} - (c-1)(\alpha + 1) ) \cos((c - 1) \omega (\alpha + 1) ).
    \end{equation}
	If $\cos((c - 1) \omega (\alpha + 1) ) = 0$, then $\bm{\gs}_{j,c-1} = \bm{0}$.

	\item[c)] If $L_{j,c} = L_{j,c-1} = 1$, then $\bm{\gs}_{j,c-1} =[\gs_{j,c-1}^{(0)},\ \gs_{j,c-1}^{(1)}]^\top$
	where 
		\begin{equation}\label{CriticalVectorDD}
	\begin{aligned}
		\bm{\gs}_{j,c-1}   = \frac{-(\lambda_{j} - (c-1)(\alpha + 1))}{\gamma} &
		\begin{pmatrix} 
		\as_{j,c-1}^{(0)}\cos((c-1) \omega(\alpha + 1) ) - \as_{j,c-1}^{(1)}\omega \sin((c-1) \omega(\alpha+1))  \\[1.5mm] 
			\as_{j,c-1}^{(1)} \cos((c-1) \omega(\alpha+1) ) \\[1.5mm] 				
		\end{pmatrix} \\
		 -\frac{1}{\gamma} &
		\begin{pmatrix} 
			\as_{j,c-1}^{(1)} \cos((c-1) \omega(\alpha + 1) )\\[1.5mm] 
				0 \\[1.5mm] 				
			\end{pmatrix} 
	\end{aligned}
\end{equation}
	From this it is easy to see that  
	\begin{itemize}
		\item[$\bullet$] if $\as_{j,c-1}^{(0)} \neq 0$ and $\as_{j,c-1}^{(1)}\neq 0$, then $\bm{\gs}_{j,c-1}\neq \bm{0}$.
		\item[$\bullet$] If $\as_{j,c-1}^{(0)} = 0$ and $\as_{j,c-1}^{(1)}\neq 0$, then  $\bm{\gs}_{j,c-1}\neq \bm{0}$.  
		\item[$\bullet$] If  $\as_{j,c-1}^{(0)}\neq 0$ and $\as_{j,c-1}^{(1)} = 0$, then   $\bm{\gs}_{j,c-1} =\bm{0}$ if $\cos((c-1) \omega(\alpha + 1) )=0$.
	\end{itemize} 
\end{itemize}

\begin{rmk} 
	In the above items {\rm a)} and {\rm b)} we do not consider $(\lambda_{j} - (c-1)(\alpha + 1) ) = 0$, because in that case we have $\sin((c - 1) \omega (\alpha + 1)) = 0$ and therefore $L_{j,c-1} = 1$, $\as_{j,c-1}^{(0)} = 0$ and $\as_{j,c-1}^{(1)} \neq 0$. Thus, if  $(\lambda_{j} - (c-1)(\alpha + 1) ) = 0$, we should take the item {\rm c)}. Moreover, if $(\lambda_{j} - (c-1)(\alpha + 1) ) = 0$, then $\cos((c - 1) \omega (\alpha + 1)) \neq 0$ which implies that $\bm{\gs}_{j,c-1}\neq \bm{0}$. 
\end{rmk}
In general, the vectors $\bm{\gs}_{j,c-1}$ at least contain a subvector of the form shown in~\eqref{CriticalVectorDD}. Thus, the last three cases are sufficient to deduce that if \(\cos((c-1)\omega(\alpha + 1)) \neq 0\), the series of shadow terms has infinitely many terms. Under assumptions \eqref{Hip-1} and \eqref{ActCriticalDD}, it is clear that \(\cos((c-1)\omega(\alpha + 1)) \neq 0\). This condition also holds for \(\alpha = -1\).

\begin{rmk}
As we mentioned in  Subsection~\ref{Errores} and reported in Tables~\ref{TblDN2} and~\ref{TblDD2}, if $S_{j}$ tends to infinity, then the series does not converge in a neighborhood of zero when $\alpha<-1$ and $\alpha >-1$, for the recursive D-N and D-D approaches, respectively.	 
\end{rmk}

\subsection{Energy of singular eigensolutions in neighbourhood of the corner tip}
In this section we determine the conditions under which the energy of a singular eigensolution in neighbourhood of the corner tip is  finite or infinite.

The energy for an eigensolution of the above   D-R problem,  deduced from the variational formulation of the problem, evaluated in a neighbourhood of the corner tip of radius $R$,  denoted as $\Omega_R =  \{(r\cos{\theta}, r\sin{\theta}), 0<r<R, 0 <\theta< \omega\}$ with $\Gamma_{2R} =  \{(r\cos{\omega}, r\sin{\omega}), 0<r<R\}$, is given by
\begin{equation}
	E_R(u) := \frac{1}{2}\int_{\Omega_R} \|\nabla u \|^2 \df \bm{x} + \gamma \int_{\Gamma_{2R}} \|\bm {x}\|^{\alpha} |u(\bm{x}) |^2 \df S(\bm{x}).
\end{equation}
 Thus, using polar coordinates we can compute $E_R(u)$ as a limit
\begin{equation}\label{energy}
	E_R(u) = \lim_{\epsilon\rightarrow 0_{+}} \frac{1}{2}\int_{\epsilon}^{R}\int_{0}^{\omega} r\left( \frac{\partial u(r,\theta) }{\partial r}\right)^2 + \frac{1}{r}\left( \frac{\partial u(r,\theta) }{\partial \theta}\right)^2   \df \theta \df r + \gamma \int_{\epsilon}^{R} r^{\alpha + 1} |u(r,\omega)|^2 \df r .
\end{equation}
When applying this integral expression to the general representation of a shadow term in~\eqref{ExpRe}, we note that this representation is given by a sum of functions with separate variables $r$ and $\theta$. Thus, the energy of the shadow term can be studied considering the integrals for $r$ and $\theta$ independently. Therefore, we consider a general part of this representation, which can be written as
\begin{equation}
	f(r) := r^{\lambda_{j}  \pm k(\alpha + 1)} \log^n (r)
\end{equation}
and 
\begin{equation}
	g(\theta) :=  \theta^n \sin((\lambda_{j}  \pm k(\alpha  + 1))\theta + n\frac{\pi}{2} ) 
\end{equation}
with $n\in\N_{0}$. Note that $g$ and its derivatives are bounded, therefore their integrals in $(0,\omega)$ also are bounded. Thus, for our aim, it is enough to study the following integrals which are deduced from~\eqref{energy} 
\begin{align}
	\int_{\epsilon}^{R}r(f'(r))^2 \df r  &=  \int_{\epsilon}^{R} r^{2(\lambda_{j}\pm k(\alpha +1))-1} \log^{2(n-1)}(r)\left( (\lambda_{j}\pm k(\alpha +1))\log(r) +  n\right)^2 \df r,\\
	\int_{\epsilon}^{R}\frac{1}{r}(f(r))^2 \df r & = \int_{\epsilon}^{R} r^{2(\lambda_{j}\pm k(\alpha +1))-1} \log^{2n}(r) \df r,\\
	\int_{\epsilon}^{R}r^{\alpha + 1} (f(r))^2 \df r & = \int_{\epsilon}^{R} r^{2(\lambda_{j}\pm k(\alpha +1))+ \alpha  + 1 }  \log^{2n}(r) \df r.
\end{align}
Given that our analysis focuses on the energy behavior near zero, we can, without loss of generality, assume $R=1$. Integration by parts  $2n$ times (see Proposition~\ref{PrpB3} in Appendix) it holds 
\begin{equation}\label{EnerL1}
\begin{aligned}
      \lim_{\epsilon\to 0} \int_{\epsilon}^{1} r (f(r)')^{2}\df r  &=   
	\begin{cases} 
		C_{j,k}(\alpha), & \lambda_{j}  \pm k (\alpha + 1) > 0 \\ 
		\pm\infty, & \lambda_{j}  \pm k (\alpha + 1) \leq 0  
	\end{cases} \\
     \lim_{\epsilon\to 0} \int_{\epsilon}^{1} \frac{1}{r}(f(r))^{2}\df r  &=   
	\begin{cases} 
		C_{j,k}(\alpha), & \lambda_{j}  \pm k (\alpha + 1) > 0 \\ 
		\pm\infty, & \lambda_{j}  \pm k (\alpha + 1) \leq 0  
	\end{cases}\\
    \lim_{
    \epsilon\to 0} \int_{\epsilon}^{1} r^{\alpha+1} (f(r))^{2}\df r & =   
	\begin{cases} 
	C_{j,k}(\alpha), & 2(\lambda_{j}  \pm k (\alpha + 1)) + (\alpha + 2) > 0 \\ 
		\pm\infty, & 2(\lambda_{j}  \pm k (\alpha + 1)) + (\alpha + 2) \leq 0,  
	\end{cases}
\end{aligned}
\end{equation}
where $C_{j,k}(\alpha)$ is a suitable constant in each case.
In the recursive D-N approach with $\alpha \geq  -1$ and for the D-D approach with $\alpha \leq -1$ it is easy to see that the energy is finite. Thus, we only have to analyze the complementary cases.  

\paragraph{Energy in the case of the D-N approach with $\alpha < -1$.}

We distinguish two cases:
\begin{itemize}
    \item $k$ tends to infinite, that means that, the series has infinite terms and we deduce from limits~\eqref{EnerL1} that the energy tends to infinite near the zero. 
    \item $k$ finite, the series has a quantity finite of terms, that means, $\frac{\omega}{\pi}(\alpha + 1) = -\frac{2p-1}{2q}$ with $j\neq p$.  If $j>p$, then 
\begin{equation}
\lambda_{j}  + k(\alpha + 1) \geq \lambda_{j}  + q (\alpha + 1)  = (2j-1)\frac{\pi}{2\omega} + q (\alpha + 1) > 0. 
\end{equation}
Indeed, $(2j-1)\frac{\pi}{2\omega} + q (\alpha + 1) > 0 $ is equivalent to $\frac{2j-1}{2q} +  \frac{\omega(\alpha + 1) }{\pi}  > 0 $, therefore \[\frac{2j-1}{2q}  -  \frac{2p-1}{2q}  > 0, \quad \text{when } j > p.\] From this last statement and~\eqref{EnerL1} we conclude that the energy is finite and infinite for $j \leq p$. Note that on $\Gamma_{2}$ the $q-$th shadow term vanishes, thus from~\eqref{EnerL1} we deduce
\[2\lambda_{j}  + 2k(\alpha + 1) + \alpha + 2  >   (2j-1)\frac{\pi}{\omega} +  (2q-1)(\alpha + 1). \]
Hence, $(2j-1)\frac{\pi}{\omega} +  (2q-1)(\alpha + 1) > 0$ when $j >p$. In fact, the above is equivalent to
\begin{equation}
(2j-1) -  \frac{(2q-1)}{2q}(2p-1) > 0,
\end{equation}
which is true for $j > p$.  
\end{itemize}

\paragraph{Energy in the case of the D-D approach with $\alpha > -1$.}

Similar to the previous case, we have:
\begin{itemize}
    \item $k$ tends to infinite, implies that the energy is infinite.
    \item  $k$ finite, we can apply the same analysis as for the  D-N approach, but in this case we should take in account that $\lambda_{j} = j\frac{\pi}{\omega}$.
\end{itemize}

We now summarize our results in Tables~\ref{TblDN1}--\ref{TblDD2}.
\begin{table}[H]
	\begin{center}
\begin{tabular}{r c c l }
	\hline\\
	 $\frac{\omega}{\pi}(\alpha + 1 )$ &  $S_{j} $  &  $L_{j,k}$  &  Description   \\[1.5mm]
	 \hline
$\notin \Q$ &  $\infty$  & 0 & Infinite series without log terms\\[1.5mm] 
$\frac{2p-1}{2q}\in \Q$ & $q$ & 0 & Exact solution without log terms\\[1.5mm]

$\frac{p}{2q-1}\in \Q $ & $\infty$ &  $\frac{k}{2q-1}\in \N$ & Infinite series with log terms\\[1.5mm]
\end{tabular}
\caption{Classification of the asymptotic series of shadow terms for the recursive D-N approach, with $j,k\in \N$ and $\alpha > -1 $. }
\label{TblDN1}
\end{center}
\end{table}

\begin{table}[H]
\begin{center}
\begin{tabular}{r c c c c c l}
	\hline\\
	 $\frac{\omega}{\pi}(\alpha + 1 ) $ & $j$ & $S_{j} $  &  $L_{j,k}$  & Conv. & Energy & Description\\[1.5mm]
	 \hline
$\notin \Q$ & $\in \N$ & $\infty$  & 0 & $\bm{\xs}$ & $\infty$  & Infinite series without log terms \\[1.5mm] 
$-\frac{2p-1}{2q}\in \Q$ & $j \neq p$ & $q$ & 0 & \checkmark & Finite,  $j > p$ & Exact solution without log terms \\[2mm]
$-\frac{2p-1}{2q}\in \Q$ & $j = p$ & $\infty$ & $\left\{ \begin{minipage}[r]{1.2cm} $k = q$ $\frac{k}{2q}\in\N $\end{minipage} \right.$ & $\bm{\xs}$ & $\infty$ & Infinite series with log terms \\[5mm]
$-\frac{p}{2q-1}\in \Q $ & $\in \N$	& $\infty$ &  $ \frac{k}{2q-1}\in \N$ & $\bm{\xs}$ &$\infty$ & Infinite series with log terms \\[1.5mm]
\end{tabular}
\caption{Classification of the asymptotic series of shadow terms for the recursive D-N approach, with $k\in \N$ and  $\alpha < -1$.}
\label{TblDN2}
\end{center}
\end{table}

 \begin{table}[H]
	\begin{center}
\begin{tabular}{r c c l}
	\hline\\
	 $\frac{\omega}{\pi}(\alpha + 1 ) $ & $S_{j} $  &  $L_{j,k}$  &  Description \\[1.5mm]
	 \hline
$\notin \Q$ &  $\infty$  & 0 & Infinite series without log terms\\[1.5mm] 
$-\frac{2p-1}{2q}\in \Q$ & $q$ & 0 & Exact solution without log terms\\[1.5mm]
$-\frac{p}{2q-1}\in \Q $ & $\infty$ &  $ \frac{k}{2q-1}\in \N$ & Infinite series with log terms\\[1.5mm]
\end{tabular}
\caption{Classification of the asymptotic series of shadow terms for the recursive D-D approach with $j,k\in \N$ and 
  $\alpha < -1$. }
\label{TblDD1}
\end{center}
\end{table}

\begin{table}[H]
	\begin{center}
\begin{tabular}{r c c c c l}
	\hline\\
	$\frac{\omega}{\pi}(\alpha + 1 ) $ & $S_{j} $  &  $L_{j,k}$  & Conv. & Energy & Description\\[1.5mm]
	 \hline
$\notin \Q$ &  $\infty$  & 0 & $\bm{\xs}$ & $\infty$ & Infinite series without log terms \\[1.5mm] 
$\frac{2p-1}{2q}\in \Q$ & $q$ & 0 & \checkmark & Finite, $j \geq p$ & Exact solution without log terms \\[1.5mm]
$\frac{p}{2q-1}\in \Q $ & $\infty$ &  $ \frac{k}{2q-1}\in \N$ & $\bm{\xs}$ & $\infty$ & Infinite series with log terms \\[1.5mm]
\end{tabular}
\caption{Classification of the asymptotic series of shadow terms for the recursive D-D approach with $j,k \in \mathbb{N}$ and $\alpha > -1$.}
\label{TblDD2}
\end{center}
\end{table}
Note that the column of $L_{j,k}$ tells us when we increase the size of the linear system for the coefficients in the shadow term representation; for instance, if $k$ is a multiple of $2q-1$ we increase the value of $L_{j,k}$ by one. 

\section{Examples}

The recursive procedures presented in previous sections are implemented in Mathematica~\cite{Wolfram1991}. Thus,  some examples of $\omega$ and $\alpha$ 
were chosen to illustrate the behavior of some  singular eigensolutions $u_j$ and their derivatives $\frac{\partial u_j}{\partial r}$ (denoted in plots as $u_{,r}$, for the sake of simplicity) and $\frac{\partial u_j}{r\partial \theta}$ (denoted in plots as $r^{-1} u_{,\theta}$) , i.e.  the components of the eigensolution gradient in polar coordinates, considering both the D-N and the D-D approaches, in  a neighbourhood of the singularity point -- the corner tip. Moreover, the eigensolution for the special case of $\alpha=-1$ is also represented. It is worth mentioning that the results for $\alpha = 0$ under the approach D-N coincide with those presented in~\cite{Jimenez-Alfaro2020}.  It is important to note that for simplicity, the parameter $\gamma$ is set to $1$ in all the solution plots.

\subsection{Graphics for the recursive D-N approach}

 We   consider four representative examples for   values of $\omega$ and $\alpha$.  A preliminary analysis of the asymptotic series obtained in each case is described in the following lines.
\begin{itemize}
  \item  For $\omega = \frac{\pi}{2}$ and  $\alpha= \frac{3}{2}$ we get   according to  Table \ref{TblDN1} 
  \[ \frac{\omega(\alpha+1) }{\pi} =  \frac{5}{4} = \frac{2p-1}{2q}, \quad \text{with } p = 3 \text{ and } q = 2,\]
this means that the exact solution is obtained including $q$ shadow terms and without $\log$ terms, i.e., $(\omega,\alpha)$ is an  apparently critical pair. The solution for $j=1$  
given by 
\[ u_{1}(r,\theta) =  \frac{1}{21} \left(r^6 \sin (6 \theta )-6 \sqrt{2} r^{7/2} \sin \
\left(\frac{7 \theta }{2}\right)\right) + r \sin (\theta ),\]
is composed of the following main and shadow terms with their respective coefficients, 
\begin{align*}
u_{1}^{(0)}(r,\theta) & = a_{1,0}^{(0)}r \sin (\theta ), &  a_{1,0}^{(0)}  &=  1,\\
u_{1}^{(1)}(r,\theta) &=  a_{1,1}^{(0)}r^{7/2} \sin \left(\frac{7 \theta }{2}\right),&  a_{1,1}^{(0)} &= -\frac{2\sqrt{2}}{7},  \\
u_{1}^{(2)}(r,\theta) &=  a_{1,2}^{(0)}r^6 \sin (6 \theta ),& a_{1,2}^{(0)} &=  \frac{1}{21}.
\end{align*}
Fig. \ref{fig:GrafSolStress_DN_OmegaPi_2Alpha3_2} shows $u_1$ and its derivatives in the domain $\Omega_1$. Looking at Fig. \ref{fig:GrafSolStress_DN_OmegaPi_2Alpha3_2_a}, the Dirichlet boundary condition is verified, as expected, at $\theta=0$. No stress singularities are observed in the stress components in Figs. \ref{fig:GrafSolStress_DN_OmegaPi_2Alpha3_2_b} and \ref{fig:GrafSolStress_DN_OmegaPi_2Alpha3_2_c}.
\begin{figure}[H]
    \centering
    \subfloat[]{\includegraphics[width=0.33\textwidth]{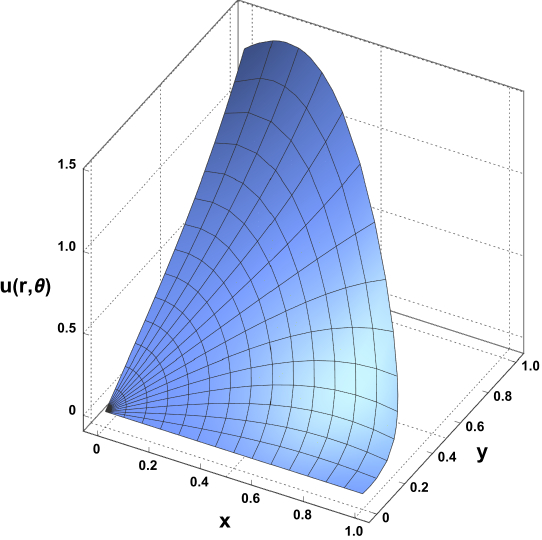}
    \label{fig:GrafSolStress_DN_OmegaPi_2Alpha3_2_a}}
    \subfloat[]{\includegraphics[width=0.33\textwidth]{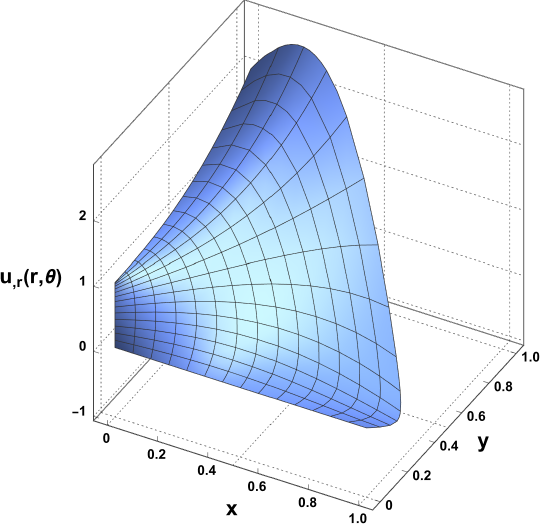}
    \label{fig:GrafSolStress_DN_OmegaPi_2Alpha3_2_b}}
    \subfloat[]{\includegraphics[width=0.33\textwidth]{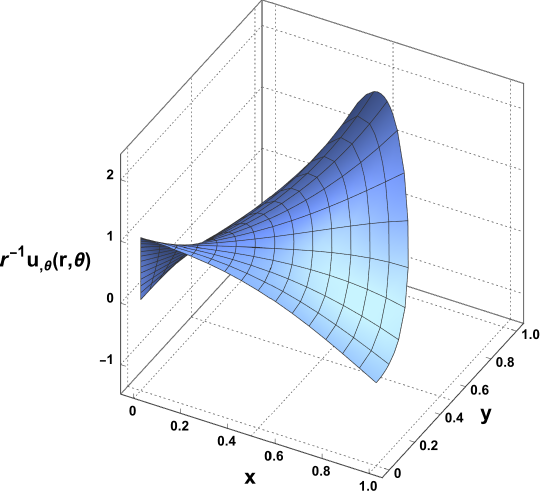}
    \label{fig:GrafSolStress_DN_OmegaPi_2Alpha3_2_c}}
    \caption{3D plots of the eigensolution (a)  $u_1$ and its derivatives (b) $u_{1,r}$ and (c) $r^{-1} u_{1,\theta}$, for $\omega=\frac{\pi}{2}$ and $\alpha=\frac{3}{2}$.}
    \label{fig:GrafSolStress_DN_OmegaPi_2Alpha3_2}
 \end{figure}

 The absolute and relative error in the Robin boundary condition is analyzed in Fig. \ref{fig:GrafErrores_DN_OmegaPi_2Alpha3_2}. It can be seen that they decrease when increasing the number of shadow terms.  Notice that, the case $S_1=0$ corresponds to the analysis of the main term. Moreover, when $S_1=2$, the exact solution is obtained, resulting in zero error for the Robin boundary condition.
\begin{figure}[H]
    \centering
    \subfloat[]{\includegraphics[scale = 0.6]{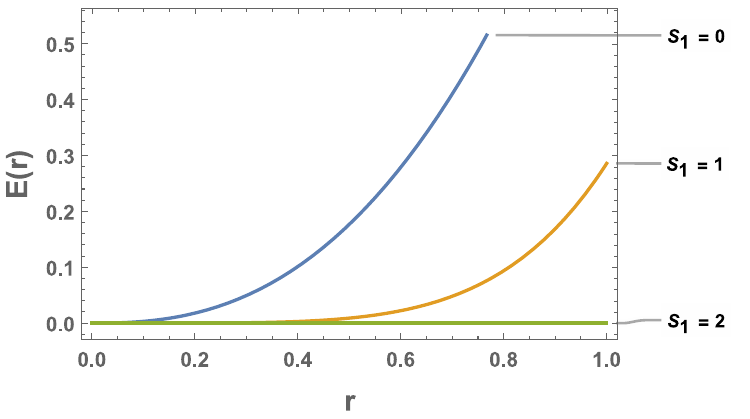}}\hspace{0.5cm}
    \subfloat[]{\includegraphics[scale = 0.6]{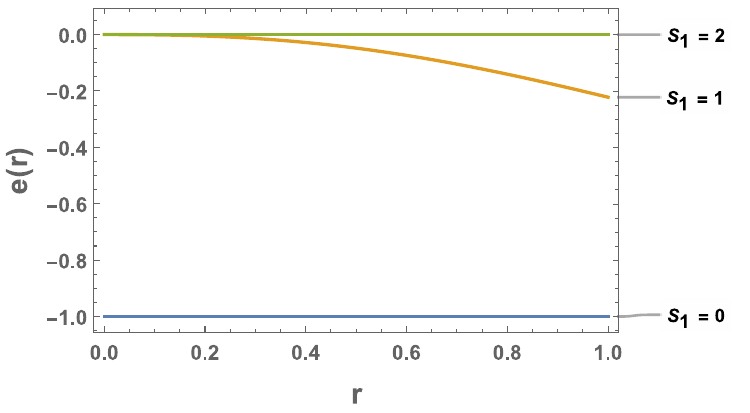}}
     \caption{Absolute and relative errors, $E_{DN}(r)$ and $e_{DN}(r)$,  of approximations of $u_1$, for $\omega=\frac{\pi}{2}$,  $\alpha=\frac{3}{2}$ and $j=1$.}
    \label{fig:GrafErrores_DN_OmegaPi_2Alpha3_2} 
 \end{figure}

 Fig.~\ref{fig:GrafSolOmStressRZOm_DN_OmegaPi_2Alpha3_2} shows the approximations of the eigensolution $u_{1}(r, \pi/2)$ and its derivative $u_{1,r}(r,\pi/2)$ for several values of $S_1$. Note that small differences can be observed for $S_{1} = 1$ and $S_{1} = 2$, although only the latter case would lead to the exact solution. Obviously,  $u_1(r,0) = 0$ and $u_{1,r}(r,0) = 0$.
\begin{figure}[H]
    \centering
    \subfloat[]{\includegraphics[scale = 0.6]{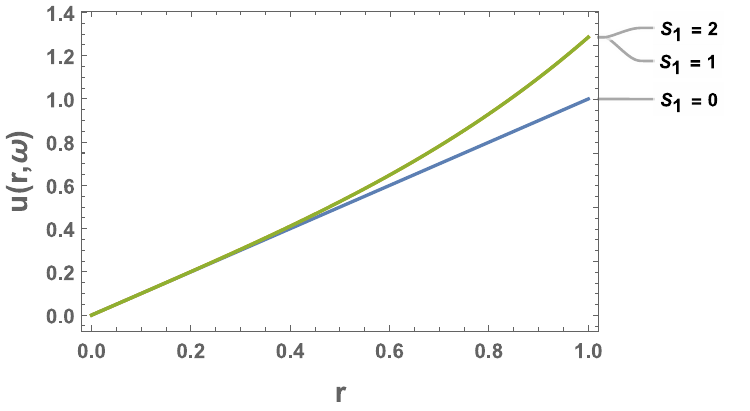}}\hspace{0.5cm}
    \subfloat[]{\includegraphics[scale = 0.6]{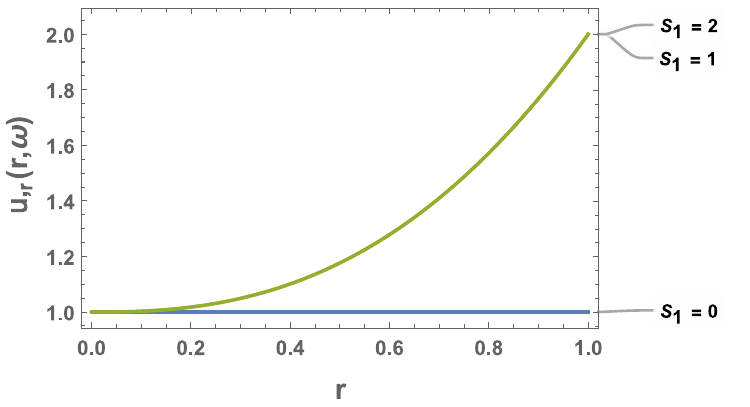}}\hspace{0.5cm}
     \caption{Plot of the approximations of (a) $u_{1}(r,\pi/2)$ and (b) $u_{1,r}(r,\pi/2)$ with increasing $S_1$, for $\omega=\frac{\pi}{2}$ and $\alpha=\frac{3}{2}$.}
    \label{fig:GrafSolOmStressRZOm_DN_OmegaPi_2Alpha3_2} 
 \end{figure}

 Fig. \ref{fig:GrafStressTZOm0_DN_OmegaPi_2Alpha3_2} shows how the approximations of the derivative $r^{-1}u_{1,\theta}(r,0)$ and  $r^{-1}u_{1,\theta}(r,\pi/2)$ change with $S_1$.  

 \begin{figure}[H]
  \centering
  \subfloat[]{\includegraphics[scale = 0.6]{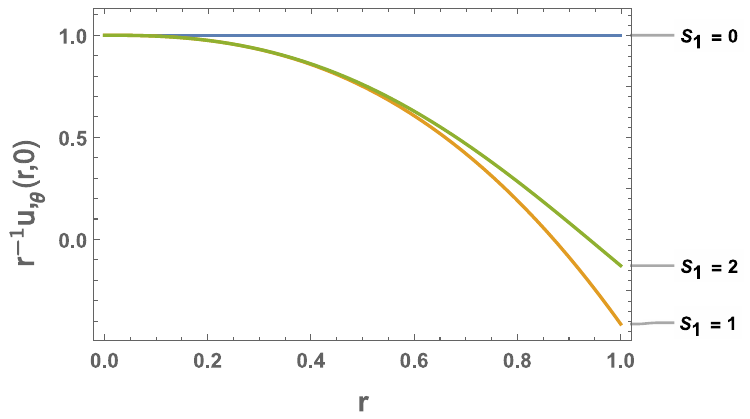}}\hspace{0.5cm}
  \subfloat[]{\includegraphics[scale = 0.6]{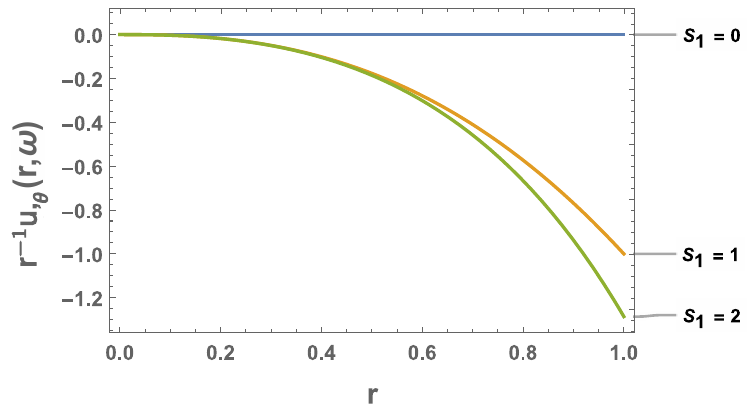}}
   \caption{Plots of the approximations of (a) $r^{-1}u_{1,\theta}(r,0)$ and  (b) $r^{-1}u_{1,\theta}(r,\pi/2)$, with increasing $S_1$, for $\omega=\frac{\pi}{2}$ and $\alpha = 3/2$.}
  \label{fig:GrafStressTZOm0_DN_OmegaPi_2Alpha3_2} 
\end{figure}

In general, for $j\in \N$ we have 

\[ u_{j}(r,\theta) =  u_{j}^{(0)}(r,\theta) + u_{j}^{(1)}(r,\theta) + u_{j}^{(2)}(r,\theta)\]
 
where the main and shadow terms with their respective coefficients are  
\begin{align*}
u_{j}^{(0)}(r,\theta) & = a_{j,0}^{(0)}r^{2j-1} \sin ((2j-1)\theta ), &  a_{j,0}^{(0)}  &=  1,\\
u_{j}^{(1)}(r,\theta) &=  a_{j,1}^{(0)}r^{2j + 3/2} \sin \left((2j + 3/2) \theta \right),&  a_{j,1}^{(0)} &=  \frac{ -\gamma \sin ((2j-1)\pi/2 )}{(2j + 3/2) \cos((2j+3/2)\pi/2)} = -\frac{2\sqrt{2}}{3+4j}, \\
u_{j}^{(2)}(r,\theta) &=  a_{j,2}^{(0)}r^{2j + 4} \sin ((2j + 4) \theta ),& a_{j,2}^{(0)} &= 
 \frac{-\gamma a_{j,1}^{(0)}\sin((2j+3/2)\pi/2) }{ (2j + 4) \cos((2j+4)\pi/2 )} = \frac{1}{(2+j)(3+4j)}. 
\end{align*}

\item  For $\omega = \frac{3\pi}{2}$ and  $\alpha= -\frac{3}{2}$ we get according to     Table \ref{TblDN2}   
\[ \frac{\omega(\alpha+1) }{\pi} =  -\frac{3}{4} = -\frac{2p-1}{2q}, \quad \text{with } p = 2 \text{ and } q = 2.\]
In this case $j > p$ is chosen, and therefore the exact solution is obtained including $q$ shadow terms and without $\log$ terms.  Hence, $\omega,\alpha)$ is an apparently critical pair. The solution for $j = 3$ is given by 
\[ u_{3}(r,\theta) =  \frac{1}{7} r^{2/3} \left(9 \sin \left(\frac{2 \theta }{3}\right)-6 \
\sqrt{2} \sqrt{r} \sin \left(\frac{7 \theta }{6}\right)+7 r \sin \
\left(\frac{5\theta }{3}\right)\right),\]
where the main and shadow terms with their respective coefficients are, 
\begin{align*}
u_{3}^{(0)}(r,\theta) & = a_{3,0}^{(0)} r^{5/3} \sin \left(\frac{5 \theta }{3}\right), &  a_{3,0}^{(0)}  &=  1,\\
u_{3}^{(1)}(r,\theta) &=  a_{3,1}^{(0)}  r^{7/6} \sin \left(\frac{7 \theta }{6}\right),&  a_{3,1}^{(0)} &=  -\frac{6}{7} \sqrt{2},   \\
u_{3}^{(2)}(r,\theta) &=  a_{3,2}^{(0)} r^{2/3} \sin \left(\frac{2 \theta }{3}\right)  ,& a_{3,2}^{(0)} &= \frac{9}{7}.
\end{align*}

 Fig. \ref{fig:GrafSolStress_DN_Omega3Pi_2Alpha-3_2} shows $u_3$ and its derivatives in the domain $\Omega_1$, where a clear   singularity in the derivatives is observed at the corner tip. Interestingly, it is not a logarithmic, but a polynomial singularity, since these derivatives  tend to infinity 
due to the negative power in $r$, in this case $-\frac{1}{3}$.  
\begin{figure}[H]
  \centering
  \subfloat[]{\includegraphics[width=0.33\textwidth]{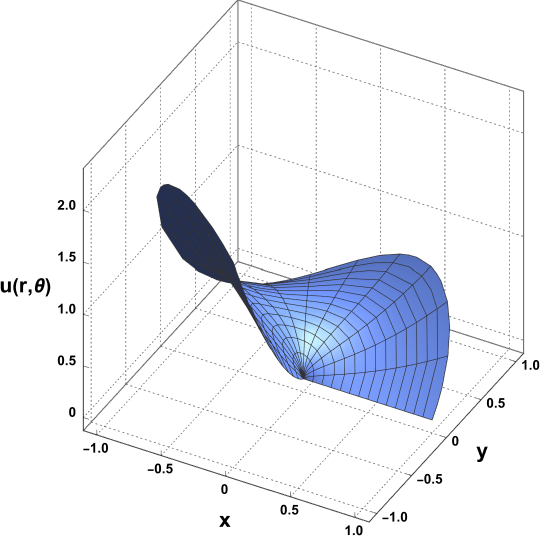}}
  \subfloat[]{\includegraphics[width=0.33\textwidth]{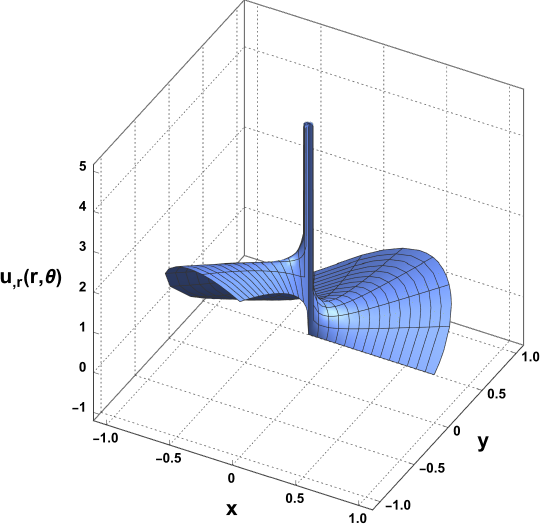}}
  \subfloat[]{\includegraphics[width=0.33\textwidth]{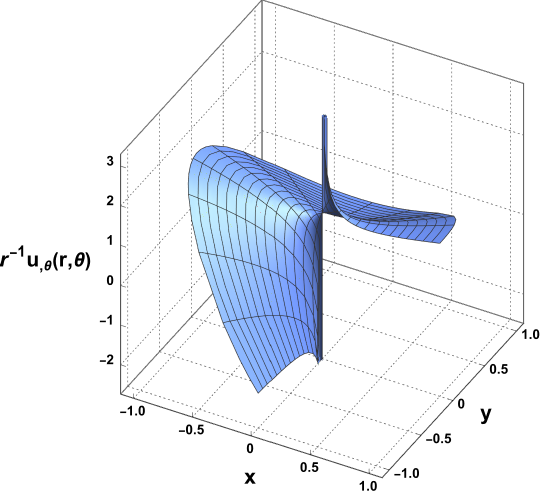}}
  \caption{3D plots of   the eigensolution   (a)   $u_3$ and its derivatives  (b) $u_{3,r}$ and (c) $r^{-1}u_{3,\theta}$, for $\omega=\frac{3\pi}{2}$ and  $\alpha=-\frac{3}{2}$.}
  \label{fig:GrafSolStress_DN_Omega3Pi_2Alpha-3_2}
\end{figure}
 Fig. \ref{fig:GrafErrores_DN_Omega3Pi_2Alpha-3_2} shows how the relative and absolute error in the Robin condition decreases with increasing $S_3$.  
\begin{figure}[H]
  \centering
  \subfloat[]{\includegraphics[scale = 0.6]{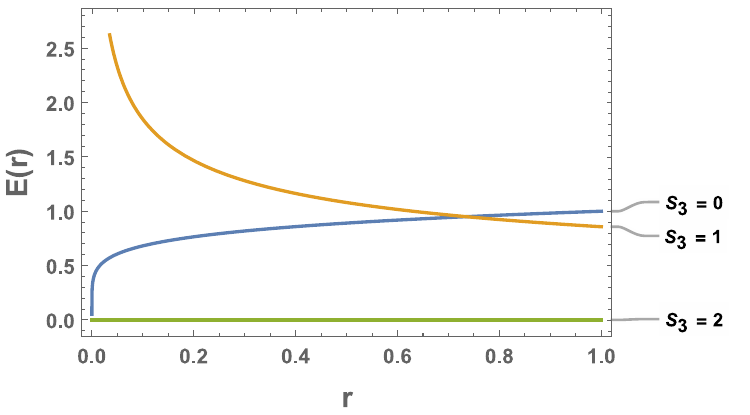}}\hspace{0.5cm}
  \subfloat[]{\includegraphics[scale = 0.6]{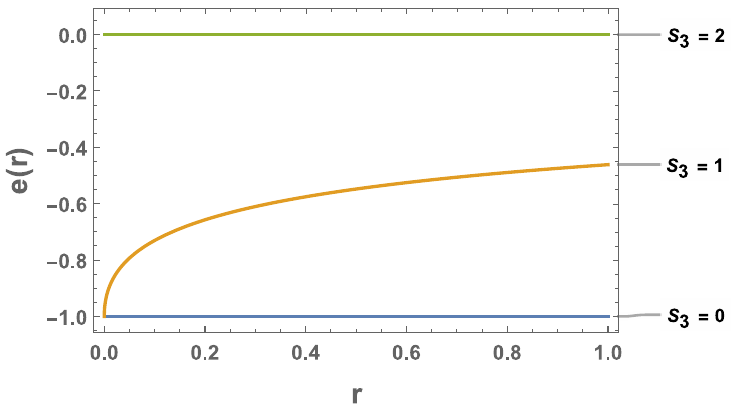}}
   \caption{Absolute and relative errors, $E_{DN}(r)$ and $e_{DN}(r)$, of approximations of $u_3$, for $\omega=\frac{3\pi}{2}$,   $\alpha=-\frac{3}{2}$ and $j=3$.}
  \label{fig:GrafErrores_DN_Omega3Pi_2Alpha-3_2} 
\end{figure}

The   singularity in derivatives can be also observed when the solution is represented at the boundaries of the corner, as  shown in Fig. \ref{fig:GrafStressTZOm0_DN_Omega3Pi_2Alpha-3_2}, where $r^{-1}u_{3,\theta}(r,0)$ and  $r^{-1}u_{3,\theta}(r,3\pi/2)$ are represented. Notice that in this case the solution strongly changes when $S_3$ is increased. This is not observed in Fig. \ref{fig:GrafSolOmStressRZOm_DN_Omega3Pi_2Alpha-3_2}, where  $u_{3}(r,3\pi/2)$ and   $u_{3,r}(r,3\pi/2)$ are represented. Therefore, the complete solution with $S_{3} = 2$ is needed to correctly represent the derivative singularity. 
\begin{figure}[H]
  \centering
  \subfloat[]{\includegraphics[scale = 0.6]{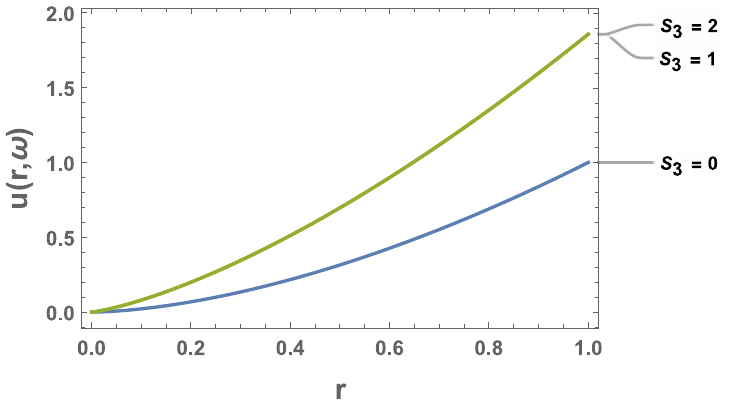}}\hspace{0.5cm}
  \subfloat[]{\includegraphics[scale = 0.6]{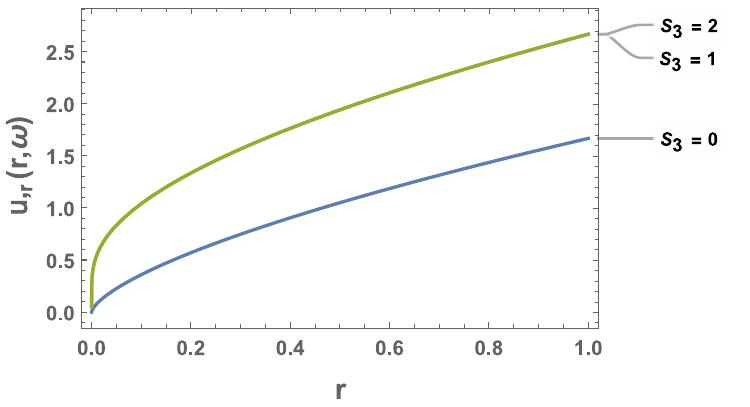}}\hspace{0.5cm}
   \caption{Plot of approximations of (a) $u_{3}(r,3\pi/2)$ and (b) $u_{3,r}(r,3\pi/2)$ with increasing $S_3$, for   $\omega=\frac{3\pi}{2}$ and $\alpha=-\frac{3}{2}$.}
  \label{fig:GrafSolOmStressRZOm_DN_Omega3Pi_2Alpha-3_2} 
\end{figure}

\begin{figure}[H]
\centering
\subfloat[]{\includegraphics[scale = 0.6]{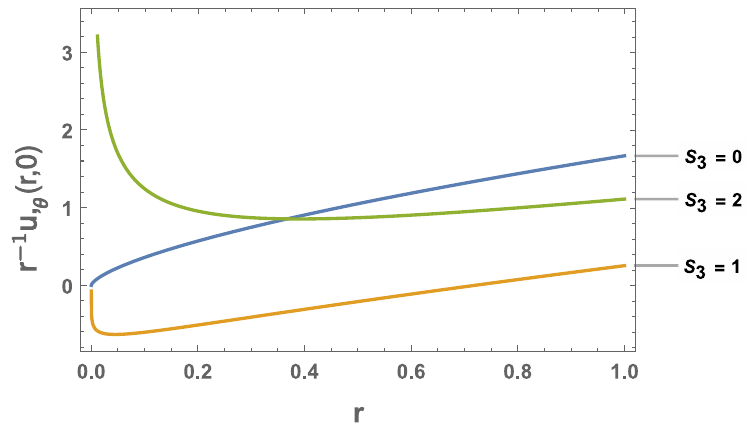}}\hspace{0.5cm}
\subfloat[]{\includegraphics[scale = 0.6]{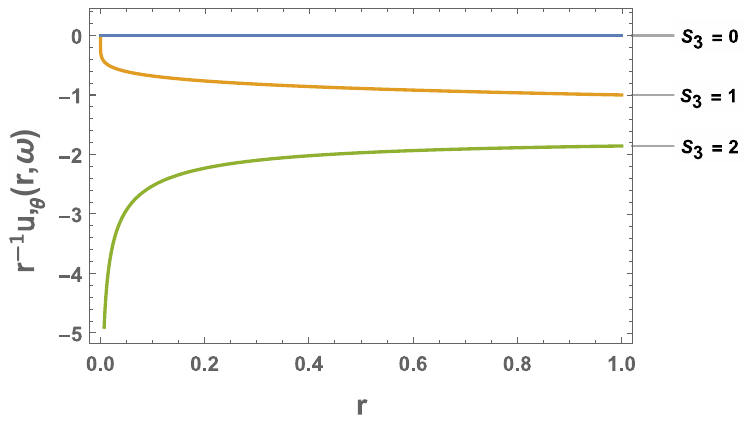}}
 \caption{Plots of approximations of (a) $r^{-1}u_{3,\theta}(r,0)$   and (b) $r^{-1}u_{3,\theta}(r,3\pi/2)$ for  increasing $S_3$, for $\omega=\frac{3\pi}{2}$ and $\alpha=-\frac{3}{2}$.}
\label{fig:GrafStressTZOm0_DN_Omega3Pi_2Alpha-3_2} 
\end{figure}

In general, for $j \geq 3$ we have 

\[ u_{j}(r,\theta) =  u_{j}^{(0)}(r,\theta) + u_{j}^{(1)}(r,\theta) + u_{j}^{(2)}(r,\theta),\]
where the main and shadow terms with their respective coefficients are  
\begin{align*}
u_{j}^{(0)}(r,\theta) & = a_{j,0}^{(0)}r^{(2j-1)/3} \sin ((2j-1)\theta/3 ), &  a_{j,0}^{(0)}  &=  1,\\
u_{j}^{(1)}(r,\theta) &=  a_{j,1}^{(0)}r^{(4j - 5)/6} \sin \left((4j -5)\theta/6 \right),&  a_{j,1}^{(0)} &=  \frac{ -\gamma \sin ((2j-1)\pi/2 )}{((4j - 5)/6) \cos((4j - 5 )\pi/4)} = \frac{6\sqrt{2}}{5-4j}, \\
u_{j}^{(2)}(r,\theta) &=  a_{j,2}^{(0)}r^{(2j - 4)/3} \sin ((2j - 4) \theta/3 ),& a_{j,2}^{(0)} &= 
\frac{-\gamma a_{j,1}^{(0)}\sin((4j - 5) \pi/4) }{ ((2j - 4)/3) \cos((2j - 4)\pi/2 )} = \frac{9}{(j-2) (4j- 5)}.
\end{align*}
Applying~\eqref{energy} and considering $R=1$, then  
\begin{align*}
  \int_{\epsilon}^{1} r^{\alpha + 1} |u(r,\omega)|^2 \df r & = \frac{6 r^{\frac{4 j}{3}-\frac{7}{6}}}{(5 - 4j)^2} \left(\frac{36}{8j -7} + \sqrt{r}\frac{3(4j-5)}{2j-1} + r\frac{(4j-5)^2}{8j-1}\right)\bigg. \bigg|_{\epsilon}^{1} 
\end{align*}
and
\begin{align*}
  \int_{\epsilon}^{1}\int_{0}^{\omega} & r\left( \frac{\partial u(r,\theta) }{\partial r}\right)^2 + \frac{1}{r}\left( \frac{\partial u(r,\theta) }{\partial \theta}\right)^2 \df \theta \df r    = \frac{r^{\frac{4 (j-2)}{3}}}{6 (5-4 j)^2} \left(\frac{48 (2 j-1) (5-4 j)^2 r^{3/2}}{7-8 j} + 72 (1-2 j) r  \right. \\ 
  & +  \left.\frac{3}{2} \pi  (4 j-5) r ((2 j (4j-7)+5)r + 36) +   \frac{864(4 j-5) \sqrt{r}}{13-8 j}+\frac{243 \pi }{j-2}\right)\bigg. \bigg|_{\epsilon}^{1}
\end{align*}
From these last integrals, it is easy to see that the energy is finite for $j \geq 3$.  In the case $j=2$, $a_{j,2}^{(0)}$ is infinite, therefore, the energy is infinite, and for $j = 1$ the power of $r$ is negative and when $\epsilon$ vanishes  the energy is infinite, which is in agreement with Table \ref{TblDN2}.

\item  For $\omega = \pi$ and  $\alpha= -\frac{3}{2}$ we get   according to    Table \ref{TblDN2}    
\begin{equation}
\frac{\omega(\alpha + 1) }{\pi} =  -\frac{1}{2} = -\frac{2p-1}{2q}, \quad \text{with } p = 1 \text{ and } q = 1.
\label{ex:DN_omega_pi_alpha-3/2}
\end{equation} 
In this case  we also choose $j > p$, Thus, we obtain the exact solution without log terms and $q$ shadow terms. Thus, $(\omega,\alpha)$ is an apparent critical pair. The solution for $j = 2$ is given by
\[ u_{2}(r,\theta) =  r^{3/2} \sin \left(\frac{3 \theta }{2}\right)- r \sin (\theta ),\]
where the main and shadow terms with their respective coefficients are  
\begin{align*}
u_{2}^{(0)}(r,\theta) & = a_{2,0}^{(0)}  r^{3/2} \sin \left(\frac{3 \theta }{2}\right), &  a_{2,0}^{(0)}  &=  1,\\
u_{2}^{(1)}(r,\theta) &= a_{2,1}^{(0)}r \sin \left(\theta\right),&  a_{2,1}^{(0)} &= -1.  \\
\end{align*}

Fig.~\ref{fig:GrafSolStress_DN_OmegaPi_1Alpha-3_2}  shows $u_2$ and its derivatives in the domain $\Omega_1$, where a clear non differentiability of  derivatives is observed at the corner tip.

\begin{figure}[H]
    \centering
    \subfloat[]{\includegraphics[width=0.33\textwidth]{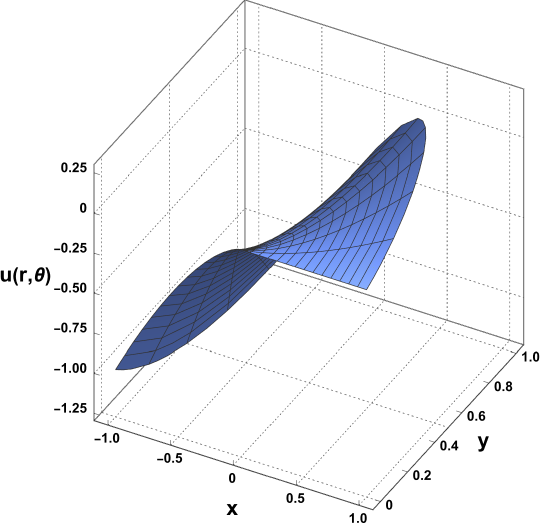}}
    \subfloat[]{\includegraphics[width=0.33\textwidth]{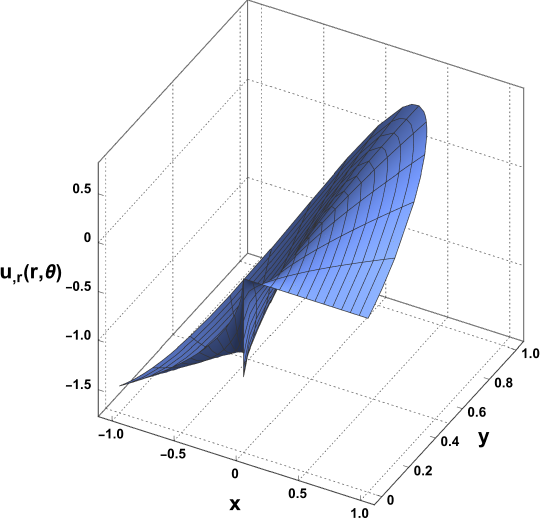}}
    \subfloat[]{\includegraphics[width=0.33\textwidth]{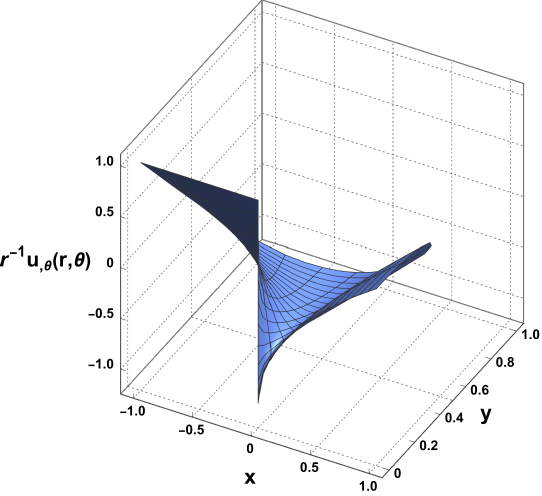}}
    \caption{3D plots of   the eigensolution   (a)   $u_2$ and its derivatives  (b) $u_{2,r}$ and (c) $r^{-1}u_{2,\theta}$, for $\omega=\pi$ and $\alpha=-\frac{3}{2}$.}
    \label{fig:GrafSolStress_DN_OmegaPi_1Alpha-3_2}
 \end{figure}
Since the exact solution is obtained when including just one shadow term, the error in the Robin boundary condition vanishes. Note that the approximation of $u_2(r,\pi)$  on the Robin boundary  for $S_2=0$ is equal to the solution $u_2(r,\pi)$ given by taking  $S_2=1$, see Fig.~\ref{fig:GrafSolOmStressRZOm_DN_OmegaPi_1Alpha-3_2}(a). The same observation is obviously valid for $u_{2,r}(r,\pi)$, see Fig.~\ref{fig:GrafSolOmStressRZOm_DN_OmegaPi_1Alpha-3_2}(b). 

\begin{figure}[H]
    \centering
    \subfloat[]{\includegraphics[scale = 0.6]{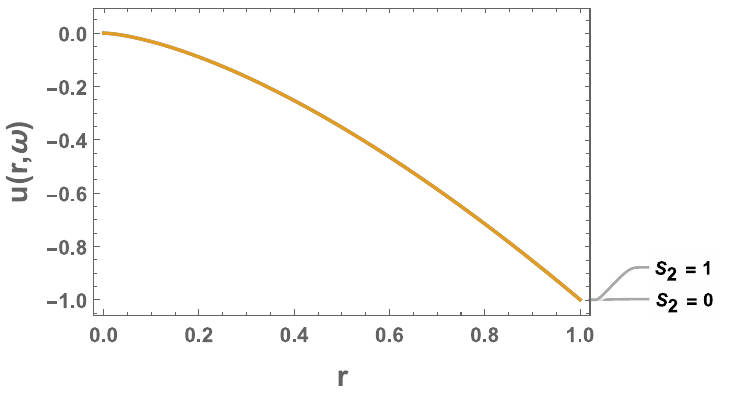}}\hspace{0.5cm}
    \subfloat[]{\includegraphics[scale = 0.6]{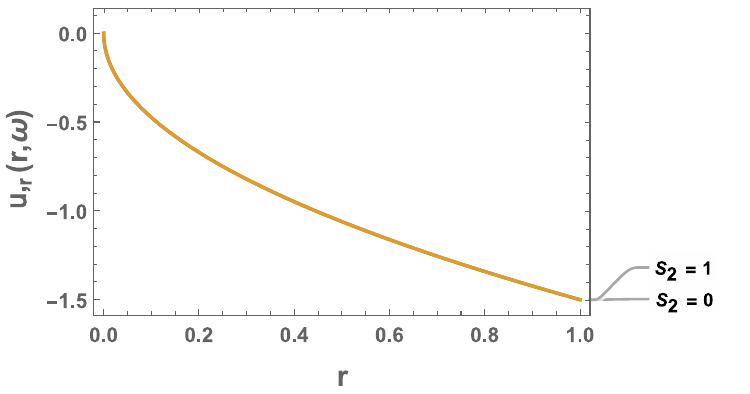}}\hspace{0.5cm}
     \caption{Plot of approximations of (a) $u_{2}(r,\pi)$ and (b) $u_{2,r}(r,\pi)$ with increasing $S_2$, for $\omega=\pi$ and $\alpha=-\frac{3}{2}$.}
    \label{fig:GrafSolOmStressRZOm_DN_OmegaPi_1Alpha-3_2} 
 \end{figure}
 
 The derivative  $r^{-1}u_{2,\theta}$ on the Dirichlet and Robin boundaries is  shown in Fig. \ref{fig:GrafStressTZOm0_DN_OmegaPi_1Alpha-3_2}, where the effect of including the shadow term can be noticed.  
 
 The   non differentiability of both derivatives  $u_{2,r}(r,\pi)$ and $r^{-1}u_{2,\theta}(r,0)$  at $r=0$, appreciated in the 3D representation in Fig.~\ref{fig:GrafSolStress_DN_OmegaPi_1Alpha-3_2},  is also observed in Figs.~\ref{fig:GrafSolOmStressRZOm_DN_OmegaPi_1Alpha-3_2}(b)  and~\ref{fig:GrafStressTZOm0_DN_OmegaPi_1Alpha-3_2}(a).
 \begin{figure}[H]
  \centering
  \subfloat[]{\includegraphics[scale = 0.6]{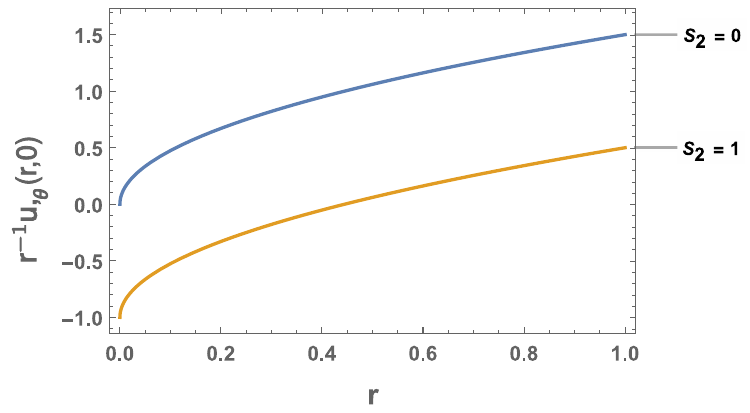}}\hspace{0.5cm}
  \subfloat[]{\includegraphics[scale = 0.6]{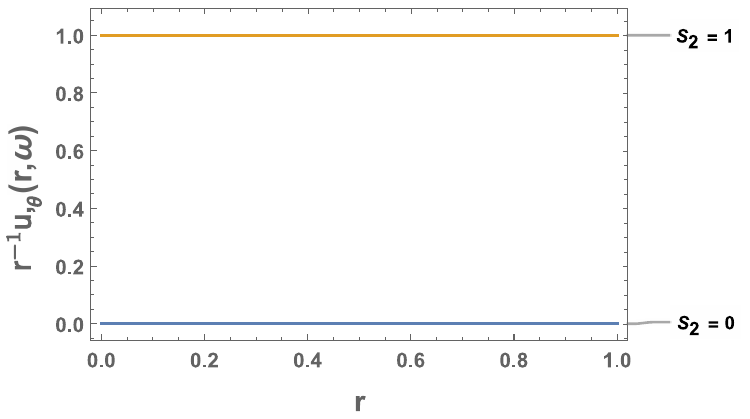}}
   \caption{Plots of approximations of (a) $r^{-1}u_{2,\theta}(r,0)$   and (b) $r^{-1}u_{2,\theta}(r,\pi)$ for  increasing $S_2$, for $\omega=\pi$ and $\alpha=-\frac{3}{2}$.}
  \label{fig:GrafStressTZOm0_DN_OmegaPi_1Alpha-3_2} 
 \end{figure}
In general, for $j\geq 2$ we have 
\[ u_{j}(r,\theta) =  u_{j}^{(0)}(r,\theta) + u_{j}^{(1)}(r,\theta),\]
where the main and shadow terms with their respective coefficients are  
\begin{align*}
u_{j}^{(0)}(r,\theta) & = a_{j,0}^{(0)}r^{(2j-1)/2} \sin ((2j-1)\theta/2 ), &  a_{j,0}^{(0)}  &=  1,\\
u_{j}^{(1)}(r,\theta) &=  a_{j,1}^{(0)}r^{j-1} \sin \left((j-1) \theta \right),&  a_{j,1}^{(0)} &=  \frac{ -\gamma \sin ((2j-1)\pi/2 )}{(j -1) \cos((j-1)\pi)} =\frac{1}{1-j} ,
\end{align*}
and similarity to the previous example we can show that the energy is finite for $j\geq 2$. From  \eqref{energy}  it holds 
\begin{align*}
  \int_{\epsilon}^{1} r^{\alpha + 1} |u(r,\omega)|^2 \df r & = \frac{2 r^{2 j- \frac{1}{2}}}{4j -1} \bigg. \bigg|_{\epsilon}^{1} 
\end{align*}
and
\begin{align*}
  \int_{\epsilon}^{1}\int_{0}^{\omega} & r\left( \frac{\partial u(r,\theta) }{\partial r}\right)^2 + \frac{1}{r}\left( \frac{\partial u(r,\theta) }{\partial \theta}\right)^2 \df \theta \df r    = \frac{1}{4} \left(r^{2 j-2} \left(\pi  (2 j-1) r+\frac{16 (1-2 j) \sqrt{r}}{4 j-3}+\frac{2 \pi }{j-1}\right)\right)\bigg. \bigg|_{\epsilon}^{1}.
\end{align*}
From this last integral, it is easy to conclude that the energy is finite for $j \geq 2$.

\item  For $\omega = 2\pi/3$ and  $\alpha= 2$ we get   according to Table \ref{TblDN1}  
\[ \frac{\omega(\alpha + 1) }{\pi} =  \frac{2}{1} = \frac{p}{2q-1}, \quad \text{with } p = 2 \text{ and } q = 1.\]
Thus,  the series of shadow terms is  infinite and includes $\log$ terms, i.e. $(\omega,\alpha)$ is an actual critical pair. For the sake of simplicity, only one shadow term is considered.   The eigensolution  $u_{1}$ is approximated  by the sum of the main and   the first shadow term 
\[ u_{1}(r,\theta) \approx  \frac{r^{3/4} \left(5 \pi  \sin \left(\frac{3 \theta }{4}\right)+2 \
\theta  r^3 \cos \left(\frac{15 \theta }{4}\right)+2 r^3 \sin \left(\
\frac{15\theta }{4}\right) \log (r)\right)}{5 \pi },\]
where the main and the shadow term with their respective coefficients are 
\begin{align*}
u_{1}^{(0)}(r,\theta) & = a_{1,0}^{(0)}  r^{3/4} \sin\left( \frac{3 \theta }{4} \right), &  a_{1,0}^{(0)}  & =  1,\\
u_{1}^{(1)}(r,\theta) & = a_{1,1}^{(0)}  r^{15/4} \sin \left(\frac{15 \theta }{4}\right) + a_{1,1}^{(1)}  r^{15/4} \left(\theta\cos \left(\frac{15 \theta }{4}\right) + \sin \left(\frac{15 \theta }{4}\right) \log(r)\right),&  a_{1,1}^{(1)} &= \frac{2}{5\pi}.  
\end{align*}
By construction, the term $a_{1,1}^{(0)} =  0$. 

Fig.~\ref{fig:GrafSolStress_DN_Omega2Pi_3Alpha2_1}  shows $u_1$ and its derivatives in the domain $\Omega_1$, where a clear singularity in the derivatives is observed at the corner tip, this behaviour disappears for sufficiently large $j$.

\begin{figure}[H]
    \centering
    \subfloat[]{\includegraphics[width=0.33\textwidth]{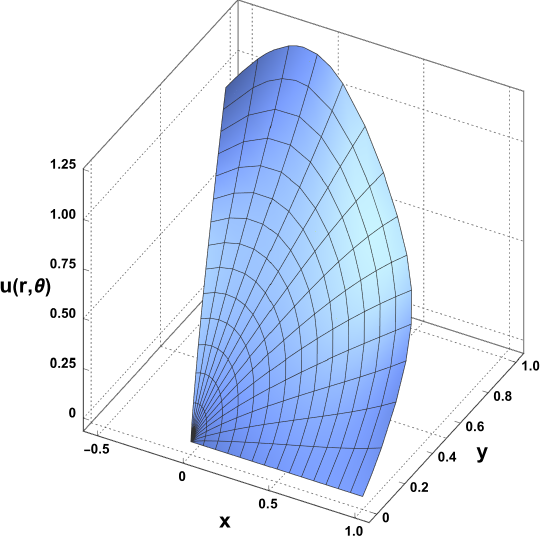}}
    \subfloat[]{\includegraphics[width=0.33\textwidth]{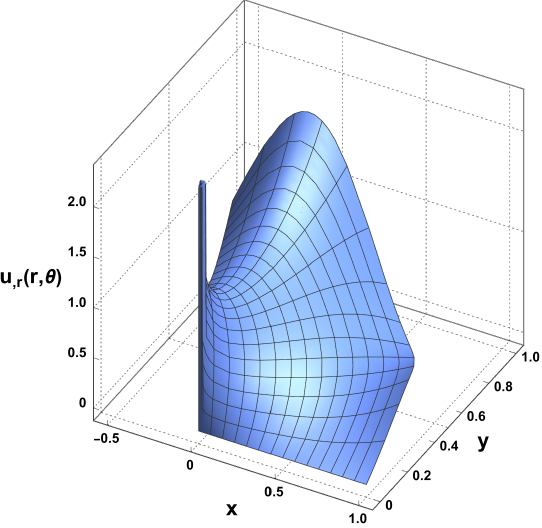}}
    \subfloat[]{\includegraphics[width=0.33\textwidth]{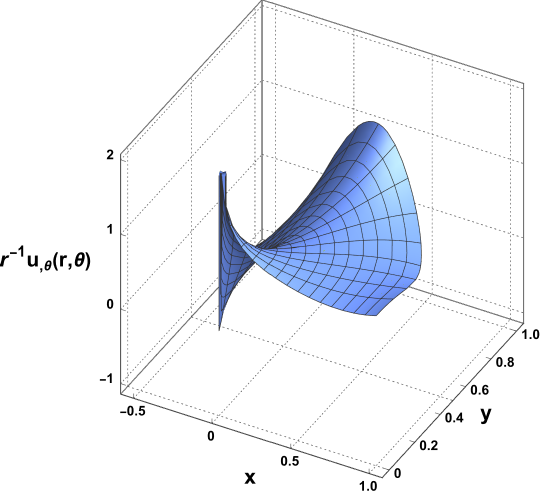}}
    \caption{3D plots of   an approximation of the eigensolution   (a)   $u_1$ and its derivatives  (b) $u_{1,r}$ and (c) $r^{-1}u_{1,\theta}$, for $S_1=1$, $\omega=2\pi/3$ and $\alpha=2$.}   \label{fig:GrafSolStress_DN_Omega2Pi_3Alpha2_1}
 \end{figure}

Fig.\ref{fig:GrafErrores_DN_Omega2Pi_3Alpha2_1} shows how the relative and absolute error in the Robin boundary condition decreases with increasing $S_1$.  A different range $r\in(0,3/2)$ has been used for a better representation. 

\begin{figure}[H]
    \centering
    \subfloat[]{\includegraphics[scale = 0.6]{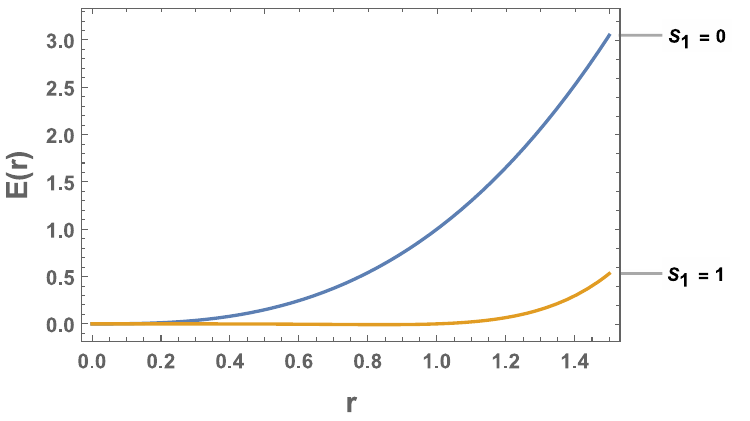}}\hspace{0.5cm}
    \subfloat[]{\includegraphics[scale = 0.6]{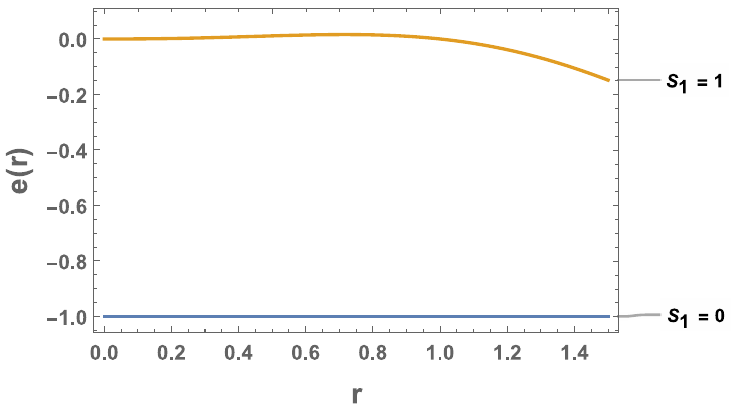}}
     \caption{Absolute and relative errors, $E_{DN}(r)$ and $e_{DN}(r)$, for $\omega=2\pi/3$, $\alpha=2$ and $j=1$.}
    \label{fig:GrafErrores_DN_Omega2Pi_3Alpha2_1} 
 \end{figure}

Fig. \ref{fig:GrafSolOmStressRZOm_DN_Omega2Pi_3Alpha2_1} shows how  the approximation of $u_{1}(r, 2\pi/3)$
and its derivative  $u_{1,r}(r,2\pi/3)$ change when increasing $S_1$. A greater influence of $S_{1}$ is appreciated in the derivative approximation.
\begin{figure}[H]
    \centering
    \subfloat[]{\includegraphics[scale = 0.6]{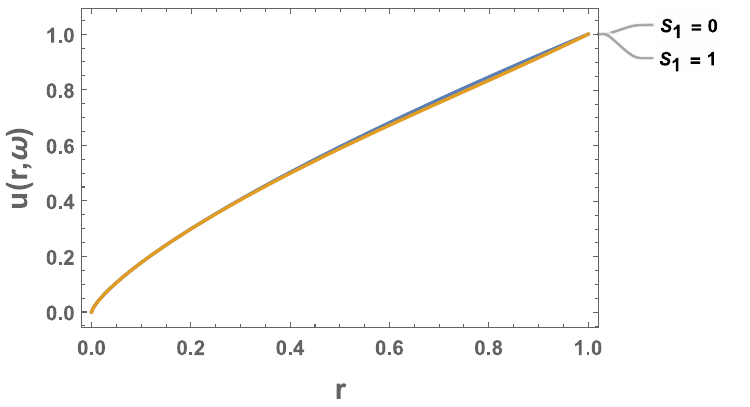}}\hspace{0.5cm}
    \subfloat[]{\includegraphics[scale = 0.6]{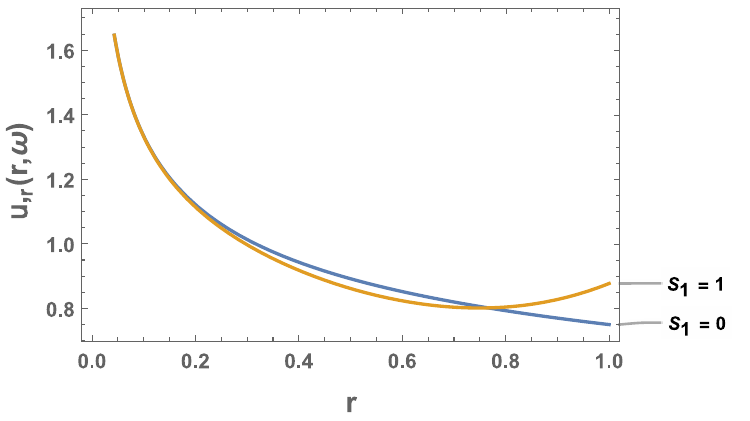}}\hspace{0.5cm}
     \caption{Plots of approximations of (a) $u_{1}(r,2\pi/3)$ and (b) $u_{1,r}(r,2\pi/3)$ with increasing $S_1$, for $\omega=2\pi/3$ and $\alpha=2$.}
    \label{fig:GrafSolOmStressRZOm_DN_Omega2Pi_3Alpha2_1} 
 \end{figure}

Fig. \ref{fig:GrafStressTZOm0_DN_Omega2Pi_3Alpha2_1} shows how the derivative $r^{-1}u_{1,\theta}$ changes with   increasing $S_1$ on the Dirichlet and  Robin boundaries. The singularity in this derivative observed in Fig. \ref{fig:GrafSolStress_DN_Omega2Pi_3Alpha2_1} is clearly shown here on the Dirichlet boundary, but not on the Robin boundary. Therefore, there is a jump in this derivative at the corner tip.
 \begin{figure}[H]
  \centering
  \subfloat[]{\includegraphics[scale = 0.6]{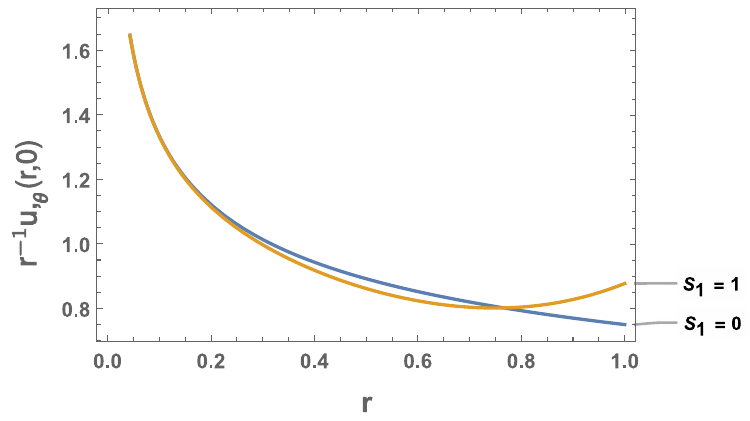}}\hspace{0.5cm}
  \subfloat[]{\includegraphics[scale = 0.6]{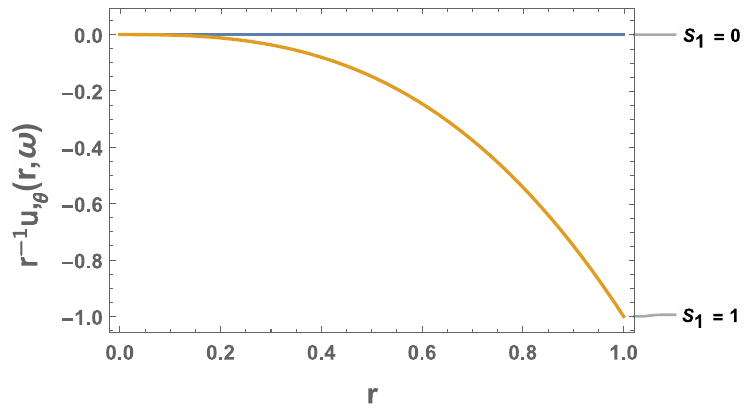}}
   \caption{Plots of approximations of (a) $r^{-1}u_{1,\theta}(r,0)$   and (b) $r^{-1}u_{1,\theta}(r,2\pi/3)$ with  increasing $S_1$, for $\omega=2\pi/3$ and $\alpha=2$.}
  \label{fig:GrafStressTZOm0_DN_Omega2Pi_3Alpha2_1} 
  \end{figure}

In general, for $j\in \N$ we have 

\[ u_{j}(r,\theta)    \approx  u_{j}^{(0)}(r,\theta) + u_{j}^{(1)}(r,\theta),\]
 
where the main and shadow terms with their respective coefficients are  
\begin{align*}
u_{j}^{(0)}(r,\theta) & = a_{j,0}^{(0)}  r^{(2j-1) 3/4} \sin\left( (2j-1)\frac{3 \theta }{4} \right),\\  
u_{j}^{(1)}(r,\theta) & = r^{(6j + 9)/4}\left[ a_{j,1}^{(0)}\sin\left( (6j + 9)\frac{\theta}{4} \right) +   a_{j,1}^{(1)} \left(\theta \cos \left((6j + 9)\frac{\theta}{4} \right) + \log(r)\sin\left( (6j + 9)\frac{\theta}{4}\right) \right)\right]  ,
\end{align*}
\end{itemize}
$a_{j,0}^{(0)}   =  1$,  $a_{j,1}^{(0)} =0 $ and $a_{j,1}^{(1)} = \frac{2}{\pi(2j + 3)}$.  In the case of using two shadow terms, the coefficients are obtained by solving the following system
\[\left(
\begin{array}{ccc}
 0 & -\pi(\frac{7}{2} + j) & -\frac{4 \pi }{3} \\[0.25cm]
 0 & 0 & -\pi(7 + 2j)) \\[0.25cm]
 0 & 0 & 0 \\
\end{array}
\right) \left(\begin{array}{c}
  a_{j,2}^{(0)} \\[0.25cm]
  a_{j,2}^{(1)} \\[0.25cm]
  a_{j,2}^{(2)} \\
 \end{array}  \right)  =  
  \left(
\begin{array}{c}
 0\\[0.25cm]
 -\frac{2}{\pi  (2 j+3)}\\[0.25cm]
 0\\
\end{array}\right),\qquad \left(\begin{array}{c}
  a_{j,2}^{(0)} \\[0.25cm]
  a_{j,2}^{(1)} \\[0.25cm]
  a_{j,2}^{(2)} \\
 \end{array}  \right)  = \left(\begin{array}{c}
 0\\[0.25cm]
 -\frac{16}{3 (3+2 j) (7+2 j)^2 \pi^2}\\[0.25cm]
 \frac{2}{(21+20 j+4 j^2) \pi^2}\\
 \end{array}  \right) \]
Therefore, 
\begin{align*}
  u_{j}^{(2)}(r,\theta)  = &  -\frac{2 r^{\frac{3}{4} (2 j+7)}}{3 \pi ^2 (2 j+3) (2 j+7)^2} \left[\sin \left(\frac{3}{4} \theta  (2 j+7)\right) \left(3 \theta ^2 (2 j+7)-3 (2 j + 7) \log ^2(r)+8 \log (r)\right)\right. \\
   & \left. -2 \theta  \cos \left(\frac{3}{4} \theta  (2 j+7)\right) (3 (2 j+7) \log (r)-4)\right] .
\end{align*}

\subsection{Graphics for the recursive  D-D approach}\label{subsec:GraphDD}
For values of $\omega$ and $\alpha$, we are going to consider three representative examples.
\begin{itemize}
  \item  For $\omega = \pi$ and  $\alpha= -3/2$ we get   according to  Table \ref{TblDD1}   
  \[ \frac{\omega(\alpha + 1) }{\pi} =  -\frac{1}{2} = -\frac{2p-1}{2q}, \quad \text{with } p = 1 \text{ and } q = 1,\]
this means, that the exact solution is obtained including $q$ shadow terms and without $\log$ terms, i.e., $(\omega,\alpha)$ is an  apparently critical pair.
 The solution for $j=1$   given by 
  \begin{equation}\label{ex:DD_omega_pi_alpha-3/2}
    u_{1}(r,\theta) = r \sin (\theta ) - r^{3/2} \sin \left(\frac{3 \theta }{2}\right),
  \end{equation}
is composed by the following main and shadow terms with their respective coefficients
\begin{align*}
u_{1}^{(0)}(r,\theta) & = \as_{1,0}^{(0)} r \sin (\theta ), &  \as_{1,0}^{(0)}  &=  1,\\
u_{1}^{(1)}(r,\theta) & = \as_{1,1}^{(0)}r^{3/2} \sin \left(\frac{3 \theta }{2}\right),&  \as_{1,1}^{(0)} &= -1.
\end{align*}
This eigensolution $u_{1}$ has the same expression except for the change of sign as $u_{2}$, for the same corner problem in the D-N approach  (see above), but the main and shadow terms are interchanged. Thus, similar conclusions could be deduced. 
3D plots of $u_{1}$ and its derivatives in the domain $\Omega_1$ are shown in Fig. \ref{fig:GrafSolStress_DD_OmegaPi_1Alpha-3_2}, cf. Fig.~\ref{fig:GrafSolStress_DN_OmegaPi_1Alpha-3_2}. Since the exact solution is obtained by including just one shadow term, the error in the Robin boundary condition vanishes.
\begin{figure}[H]
    \centering
    \subfloat[]{\includegraphics[width=0.33\textwidth]{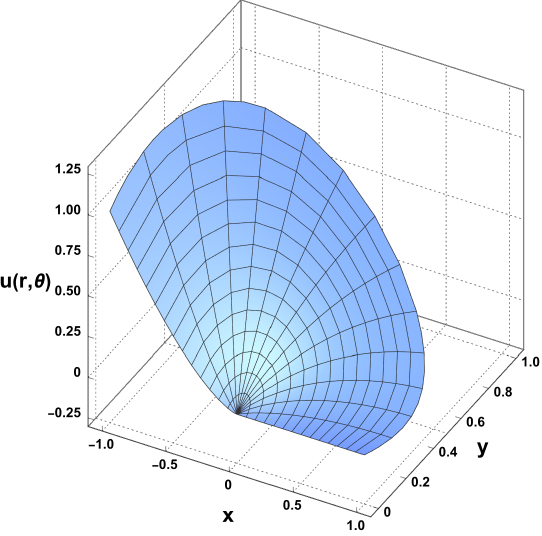}}
    \subfloat[]{\includegraphics[width=0.33\textwidth]{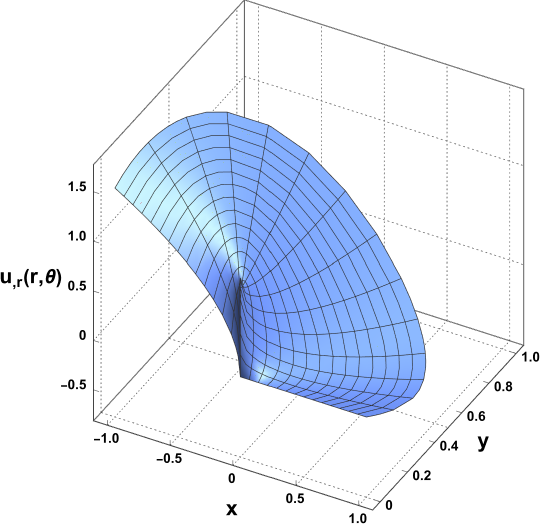}}
    \subfloat[]{\includegraphics[width=0.34\textwidth]{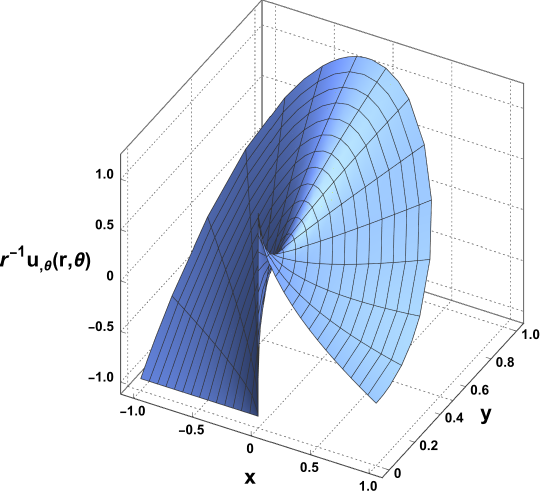}}
    \caption{3D plots of the eigensolution (a)  $u_1$ and its derivatives (b) $u_{1,r}$ and (c) $r^{-1} u_{1,\theta}$, for $\omega=\pi$, $\alpha=-\frac{3}2$.}
    \label{fig:GrafSolStress_DD_OmegaPi_1Alpha-3_2}
 \end{figure}

The Fig. \ref{fig:GrafSolOmStressRZOm_DD_OmegaPi_1Alpha-3_2}  shows how  $u_{1}(r, \pi)$ and its derivative $u_{1,r}(r,\pi)$ change  with increasing $S_1$, cf.  
Fig. \ref{fig:GrafSolOmStressRZOm_DN_OmegaPi_1Alpha-3_2}.
\begin{figure}[H]
    \centering
    \subfloat[]{\includegraphics[scale = 0.6]{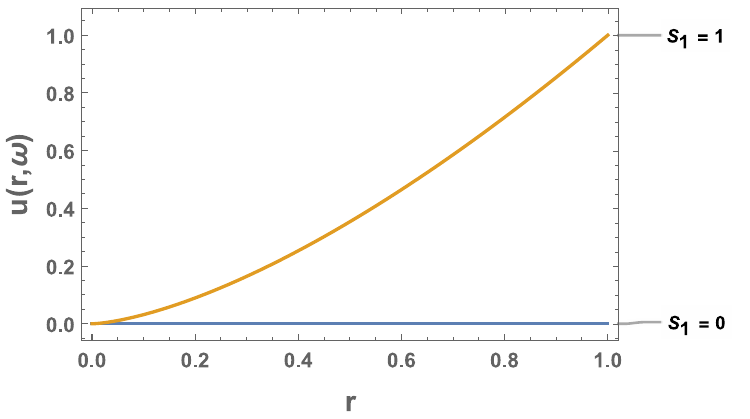}}\hspace{0.5cm}
    \subfloat[]{\includegraphics[scale = 0.6]{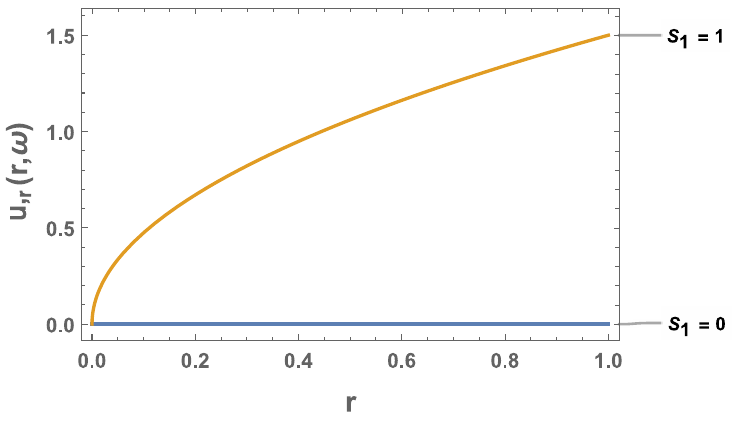}}\hspace{0.5cm}
     \caption{Plots of approximations of (a) $u_{1}(r,\pi)$ and (b) $u_{1,r}(r,\pi)$ with increasing $S_1$, for $\omega=\pi$ and $\alpha=-\frac{3}{2}$.}
\label{fig:GrafSolOmStressRZOm_DD_OmegaPi_1Alpha-3_2} 
 \end{figure}

 The derivative  $r^{-1}u_{1,\theta}$ on the Dirichlet and Robin boundaries is  plotted in Fig.~\ref{fig:GrafStressTZOm0_DD_OmegaPi_1Alpha-3_2}, where the effect of including the shadow term is noticed only on the Dirichlet boundary, cf. Fig.~\ref{fig:GrafStressTZOm0_DN_OmegaPi_1Alpha-3_2} 

 \begin{figure}[H]
  \centering
  \subfloat[]{\includegraphics[scale = 0.6]{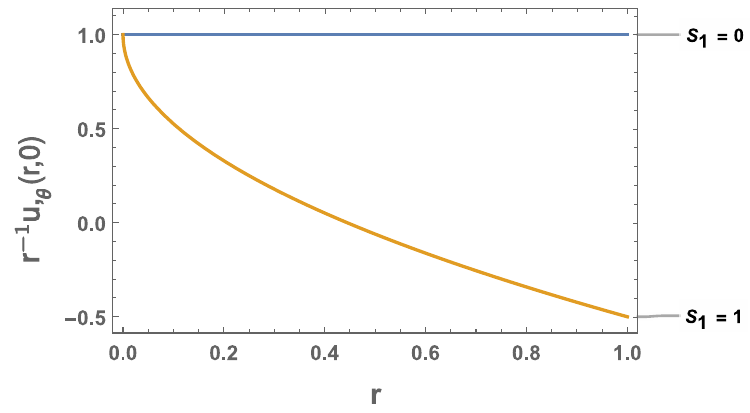}}\hspace{0.5cm}
  \subfloat[]{\includegraphics[scale = 0.6]{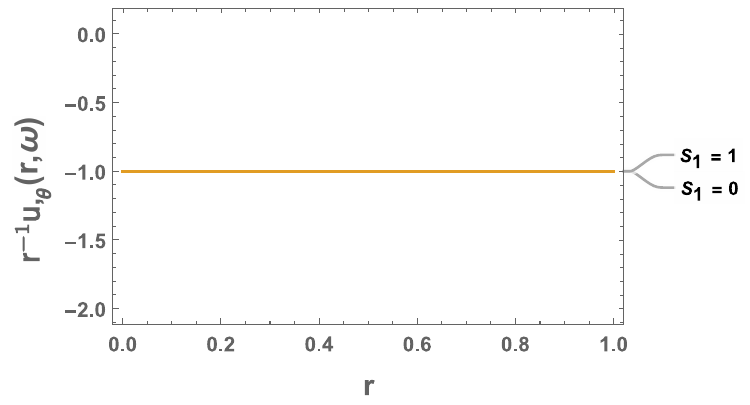}}
   \caption{Plots of approximations of (a) $r^{-1}u_{1,\theta}(r,0)$   and (b) $r^{-1}u_{1,\theta}(r,\pi)$ for  increasing $S_1$, for $\omega=\pi$ and $\alpha=-\frac{3}{2}$.}
  \label{fig:GrafStressTZOm0_DD_OmegaPi_1Alpha-3_2} 
\end{figure}

 In general, for $j\in \N$ we have 
\[ u_{j}(r,\theta) =  u_{j}^{(0)}(r,\theta) + u_{j}^{(1)}(r,\theta),\]
where the main and shadow terms with their respective coefficients are  
\begin{align*}
u_{j}^{(0)}(r,\theta) & = \as_{j,0}^{(0)}r^{j} \sin (\theta ), &  \as_{j,0}^{(0)}  &=  1,\\
u_{j}^{(1)}(r,\theta) &=  \as_{j,1}^{(0)}r^{j+ 1/2} \sin((j+ 1/2) \theta ),&  \as_{j,1}^{(0)} &=  -j .
\end{align*}
 
  \item  For $\omega = \pi$ and  $\alpha = -5/3$ we get   according to  Table \ref{TblDD1}  
\[\frac{\omega(\alpha+1) }{\pi} =  -\frac{2}{3} = -\frac{2p}{2q-1}, \quad \text{with } p = 2 \text{ and } q = 2.\]
Thus, the series of shadow terms is infinite and includes $\log$ terms, the size of the linear system increases when $k$ is a multiple of $3$, i.e. $(\omega,\alpha)$ is an actual critical pair. 

An approximation of the eigensolution  $u_{1}$     by the sum of the main and   the three  shadow terms  is given by
\begin{equation}
 u_{1}(r,\theta) \approx  \frac{10}{9} r^{7/3} \sin \left(\frac{7 \theta }{3}\right)-\frac{2 r^{5/3} \sin \left(\frac{5 \theta }{3}\right)}{\sqrt{3}}+\frac{35 r^3 (\theta\cos (3 \theta )+\sin (3 \theta ) \log (r))}{27 \pi }+r \sin (\theta ),   
 \label{DDu1-5/3}
\end{equation}
where the main and shadow terms with their respective coefficients are  
\begin{align*}
u_{1}^{(0)}(r,\theta) & = \as_{1,0}^{(0)}r \sin (\theta ), &  \as_{1,0}^{(0)}  &=  1\\
u_{1}^{(1)}(r,\theta) & = \as_{1,1}^{(0)}r^{5/3} \sin \left(\frac{5 \theta }{3}\right),&  \as_{1,1}^{(0)} &= -\frac{2}{\sqrt{3}}  \\
u_{1}^{(2)}(r,\theta) & = \as_{1,2}^{(0)} r^{7/3} \sin \left(\frac{7 \theta }{3} \right), & \as_{1,2}^{(0)} &=  \frac{10}{9}\\
u_{1}^{(3)}(r,\theta) & = \as_{1,3}^{(0)}r^3 \sin(3\theta) + \as_{1,3}^{(1)} r^{3} \left[\theta \cos(3\theta) + \log(r) \sin (3 \theta)\right], &  \as_{1,3}^{(0)} &= 0\\
 &  & \as_{1,3}^{(1)} &=  \frac{35}{27\pi} 
\end{align*}
 
3D plots of an approximation of $u_1$ and its derivatives in the domain $\Omega_1$ are shown in Fig.~\ref{fig:GrafSolStress_DD_OmegaPi_1Alpha-5_3}.
\begin{figure}[H]
    \centering
    \subfloat[]{\includegraphics[width=0.33\textwidth]{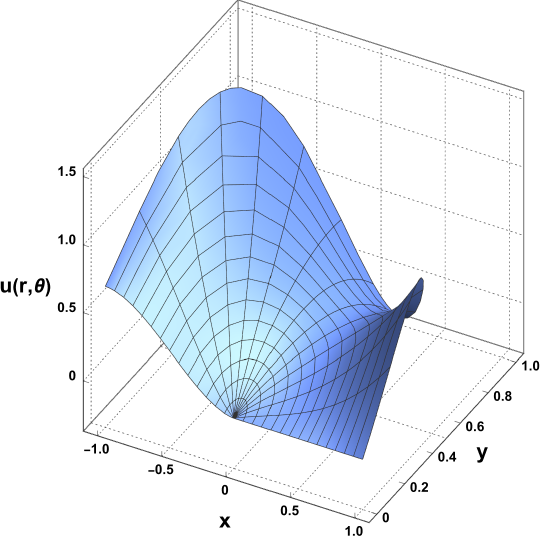}}
    \subfloat[]{\includegraphics[width=0.33\textwidth]{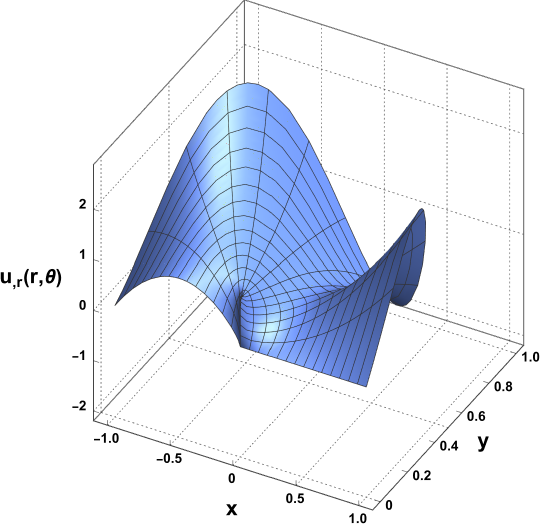}}
    \subfloat[]{\includegraphics[width=0.34\textwidth]{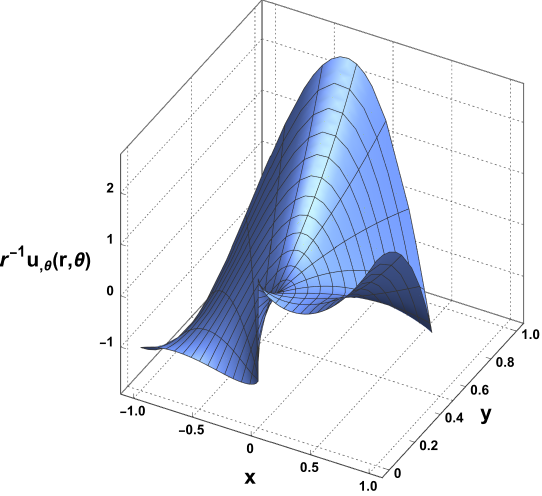}}
    \caption{3D plots of an approximation  of the eigensolution (a)  $u_1$ and its derivatives (b) $u_{1,r}$ and (c) $r^{-1} u_{1,\theta}$, for $\omega=\pi$ and $\alpha=-\frac{5}{3}$.}
    \label{fig:GrafSolStress_DD_OmegaPi_1Alpha-5_3}
 \end{figure}
 Fig.~\ref{fig:GrafErrores_DD_OmegaPi_1Alpha-5_3} shows how the relative and absolute errors in the Robin boundary condition decrease with increasing $S_1$.  
\begin{figure}[H]
    \centering
    \subfloat[]{\includegraphics[scale = 0.6]{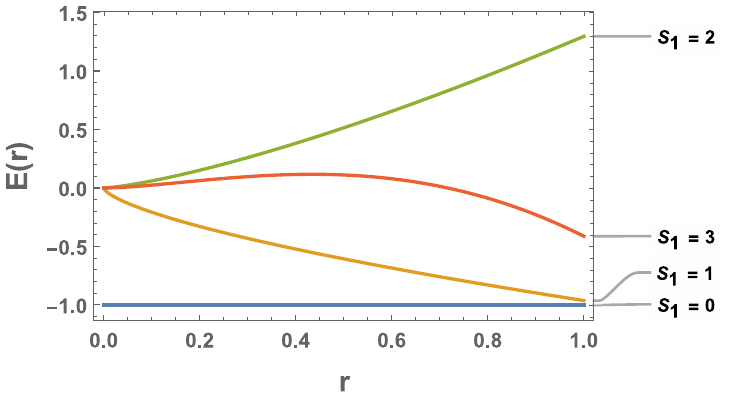}}\hspace{0.5cm}
    \subfloat[]{\includegraphics[scale = 0.6]{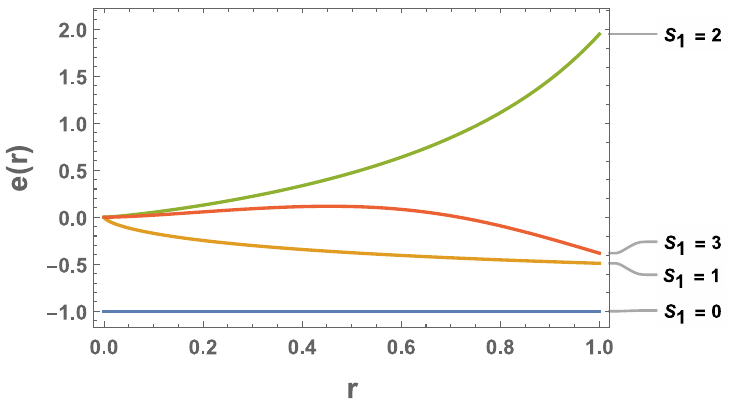}}
     \caption{Absolute and relative errors, $E_{DN}(r)$ and $e_{DN}(r)$, for $\omega=\pi$, $\alpha=-\frac{5}{3}$ and $j=1$.}
    \label{fig:GrafErrores_DD_OmegaPi_1Alpha-5_3} 
 \end{figure}
Fig.~\ref{fig:GrafSolOmStressRZOm_DD_OmegaPi_1Alpha-5_3}    shows how   $u_{1}(r, \pi)$ and $u_{1}(r, \pi)$ change  with increasing $S_1$. 
\begin{figure}[H]
    \centering
    \subfloat[]{\includegraphics[scale = 0.6]{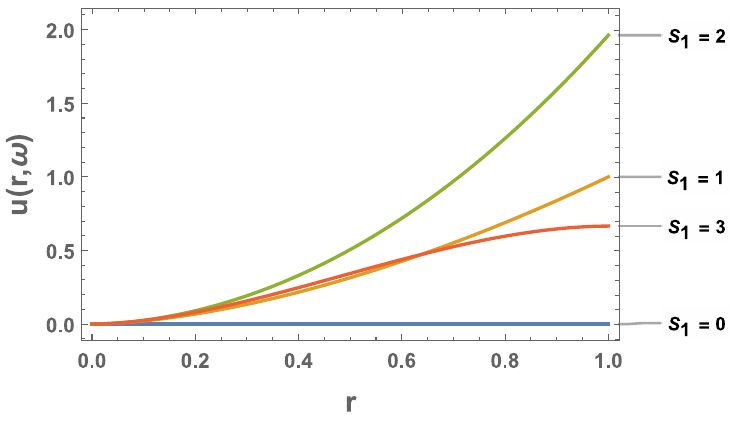}}\hspace{0.5cm}
    \subfloat[]{\includegraphics[scale = 0.6]{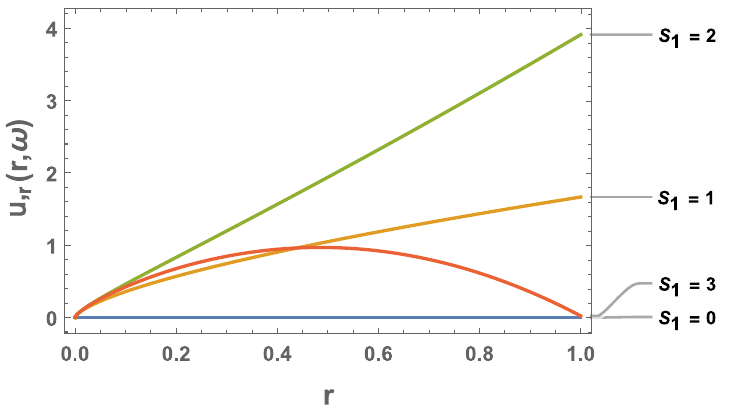}}\hspace{0.5cm}
     \caption{Plots of approximations of (a) $u_{1}(r,\pi)$ and (b) $u_{1,r}(r,\pi)$ with increasing $S_1$,  for $\omega=\pi$ and $\alpha=-\frac{5}{3}$.}  \label{fig:GrafSolOmStressRZOm_DD_OmegaPi_1Alpha-5_3} 
 \end{figure}
Fig.~\ref{fig:GrafStressTZOm0_DD_OmegaPi_1Alpha-5_3} shows how the derivative $r^{-1}u_{1,\theta}$ changes with   increasing $S_1$ on the Dirichlet and  Robin boundaries. 
 \begin{figure}[H]
  \centering
  \subfloat[]{\includegraphics[scale = 0.6]{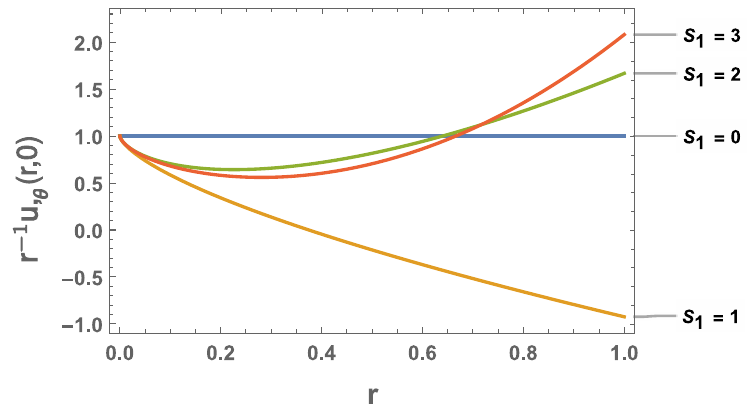}}\hspace{0.5cm}
  \subfloat[]{\includegraphics[scale = 0.6]{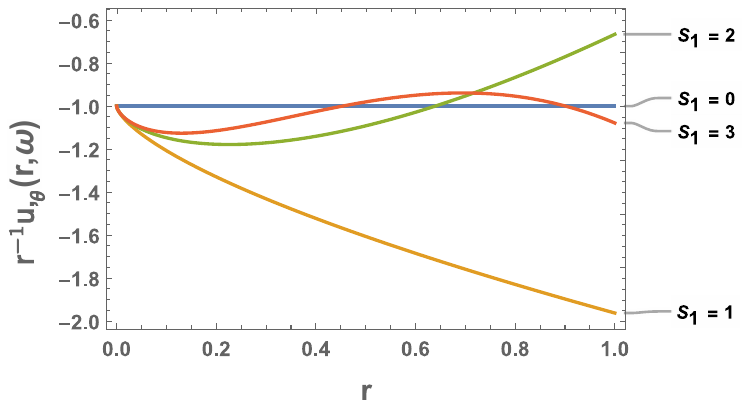}}
   \caption{Plots of approximations of (a) $r^{-1}u_{1,\theta}(r,0)$   and (b) $r^{-1}u_{1,\theta}(r,\pi)$ for  increasing $S_1$,  for $\omega=\pi$ and $\alpha=-\frac{5}{3}$.}
  \label{fig:GrafStressTZOm0_DD_OmegaPi_1Alpha-5_3} 
\end{figure}

In general, for $j\in \N$ we have 
\[ u_{j}(r,\theta) \approx  u_{j}^{(0)}(r,\theta) + u_{j}^{(1)}(r,\theta) +   u_{j}^{(2)}(r,\theta) + u_{j}^{(3)}(r,\theta)\]
where the main and shadow terms with their respective coefficients are
\begin{align*}
  u_{j}^{(0)}(r,\theta) & = \as_{j,0}^{(0)} r^j \sin (\theta  j), \hspace{2.8cm} \as_{j,0}^{(0)}   =  1,    \\  
  u_{j}^{(1)}(r,\theta) & = \as_{j,1}^{(0)} r^{j+\frac{2}{3}} \sin \left(\theta  \left(j+\frac{2}{3}\right)\right), \qquad \as_{j,1}^{(0)}   =  -\frac{2 j}{\sqrt{3}},  \\  
  u_{j}^{(2)}(r,\theta) & = \as_{j,2}^{(0)} r^{j+\frac{4}{3}} \sin \left(\theta  \left(j+\frac{4}{3}\right)\right), \qquad \as_{j,2}^{(0)}  =  \frac{2}{9}(3j^2 + 2 j), \\   
  u_{j}^{(3)}(r,\theta) & = r^{j+2} \left(\as_{j,3}^{(0)}\sin (\theta  (j+2)) + \as_{j,3}^{(1)}(\theta  \cos (\theta  (j+2)) + \log(r) \sin (\theta  (j+2))) \right), \\
   & \quad\ \as_{j,3}^{(0)}  = 0,   \qquad \as_{j,3}^{(1)}  =  \frac{j(3 j+2) (3 j+4)}{27\pi}  .
\end{align*}

  \item  For $\omega = \pi/2$ and  $\alpha = 1/2$ we get   according to  Table \ref{TblDD2}  
  \[ \frac{\omega(\alpha + 1) }{\pi} =  \frac{3}{4} = \frac{2p-1}{2q}, \quad \text{with } p = 2 \text{ and } q = 2.\]

In this case  we   choose $j \geq p$, thus, we obtain the exact solution without log terms and $q$ shadow terms. Thus, $(\omega,\alpha)$ is an apparent critical pair. The solution for $j = 2$   given by
  \[u_{2}(r,\theta) = 4 \sqrt{2} r^{5/2} \sin \left(\frac{5 \theta }{2}\right) + r^4 \sin\  (4 \theta )+10 r \sin (\theta ),\]
is composed by the following main and shadow terms with their respective coefficients 
\begin{align*}
u_{2}^{(0)}(r,\theta) & = \as_{2,0}^{(0)} r^4 \sin (4\theta ), &  \as_{1,0}^{(0)}  &=  1,\\
u_{2}^{(1)}(r,\theta) & = \as_{2,1}^{(0)} r^ {5/2} \sin \left(\frac{5 \theta }{2}\right),&  \as_{2,1}^{(0)} &= 4\sqrt{2},\\
u_{2}^{(2)}(r,\theta) & = \as_{2,2}^{(0)} r \sin (\theta ),&  \as_{2,2}^{(0)} &= 10.\\
\end{align*}

3D plots of  $u_2$ and its derivatives are shown in Fig.~\ref{fig:GrafSolStress_DD_OmegaPi_2Alpha1_2}.%
Fig.~\ref{fig:GrafErrores_DD_OmegaPi_2Alpha1_2} shows how the relative and absolute errors in the Robin boundary condition decrease with increasing $S_2$.  
\begin{figure}[H]
    \centering
    \subfloat[]{\includegraphics[width=0.33\textwidth]{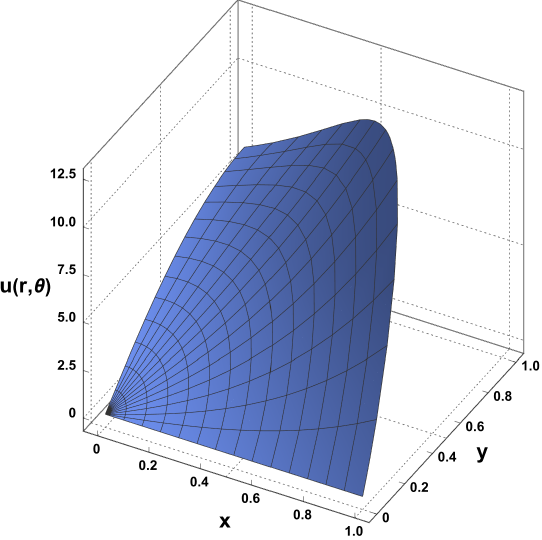}}
    \subfloat[]{\includegraphics[width=0.33\textwidth]{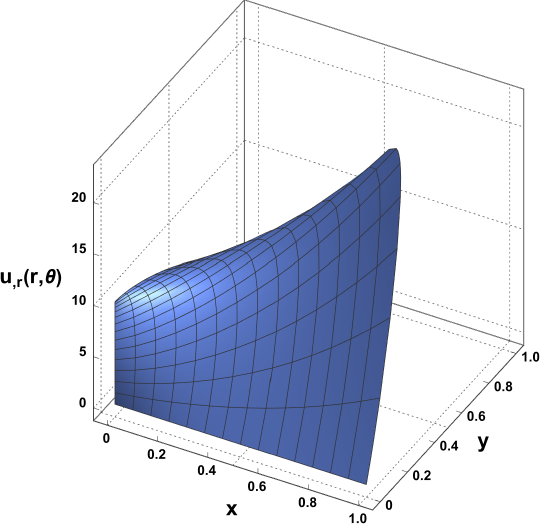}}
    \subfloat[]{\includegraphics[width=0.34\textwidth]{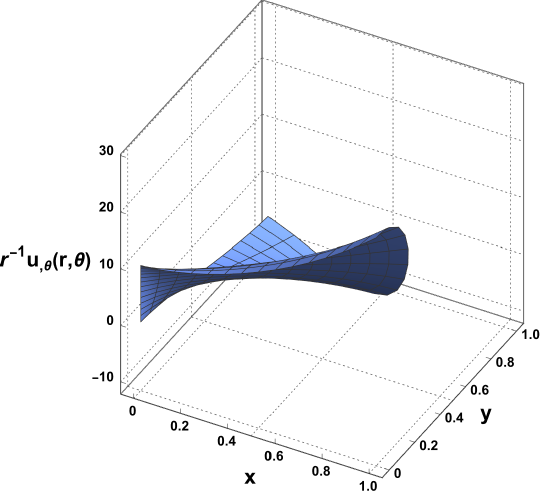}}
    \caption{3D plots of the eigensolution (a)  $u_2$ and its derivatives (b) $u_{2,r}$ and (c) $r^{-1} u_{2,\theta}$, for $\omega=\pi/2$ and $\alpha=\frac{1}2$.}
\label{fig:GrafSolStress_DD_OmegaPi_2Alpha1_2}
 \end{figure}
 \begin{figure}[H]
  \centering
  \subfloat[]{\includegraphics[scale = 0.6]{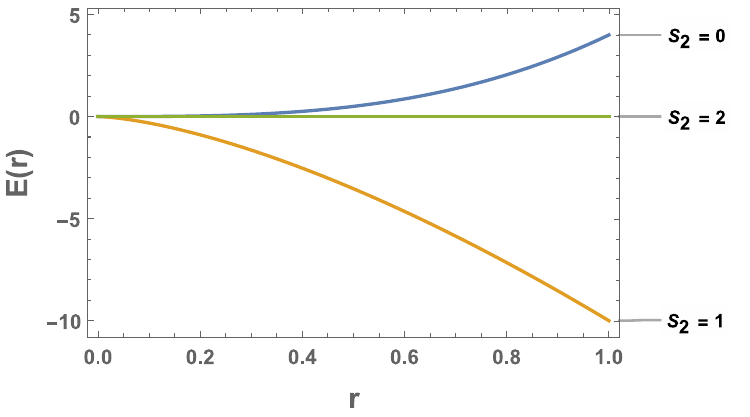}}\hspace{0.5cm}
  \subfloat[]{\includegraphics[scale = 0.6]{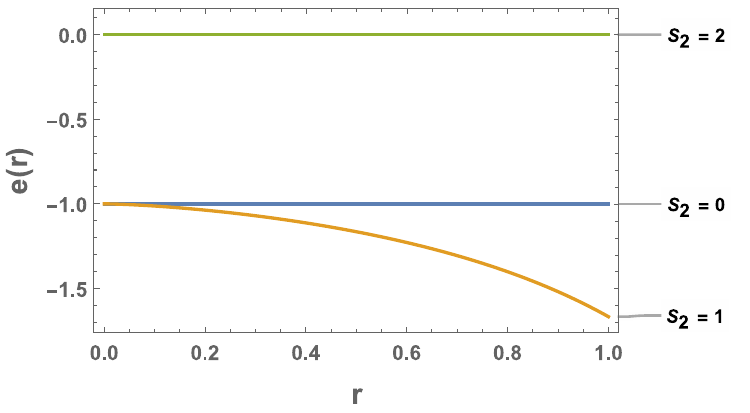}}
   \caption{Absolute and relative errors, $E_{DN}(r)$ and $e_{DN}(r)$, for  $\omega=\pi/2$, $\alpha=\frac{1}{2}$ and $j = 2$.}
  \label{fig:GrafErrores_DD_OmegaPi_2Alpha1_2} 
\end{figure}
Fig.~\ref{fig:GrafSolOmStressRZOm_DD_OmegaPi_2Alpha1_2}  shows how the approximations of the eigensolution  $u_{2}(r, \pi/2)$  and its derivative  $u_{2,r}(r, \pi/2)$ change with increasing $S_2$.  
\begin{figure}[H]
  \centering
  \subfloat[]{\includegraphics[scale = 0.6]{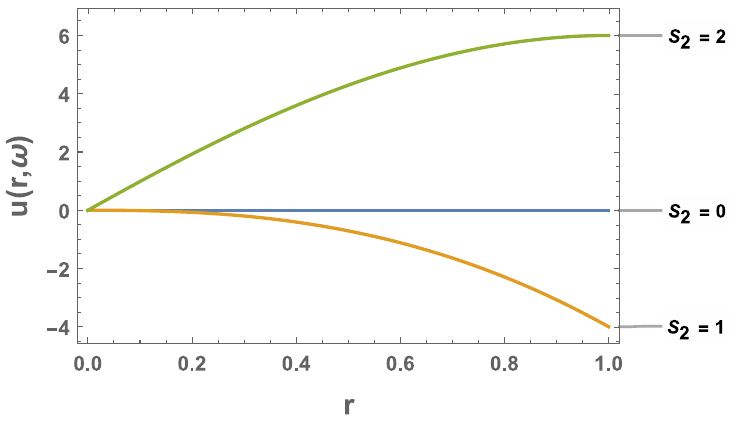}}\hspace{0.5cm}
  \subfloat[]{\includegraphics[scale = 0.6]{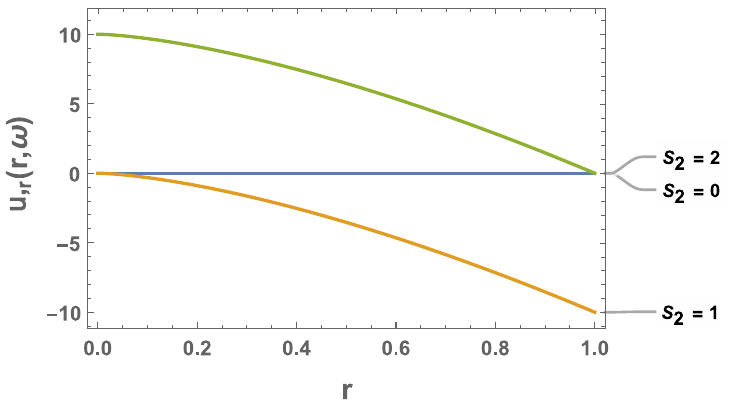}}\hspace{0.5cm}
   \caption{Plots of approximations of (a) $u_{2}(r,\pi/2)$ and (b) $u_{2,r}(r,\pi/2)$ with increasing $S_2$, for  $\omega=\pi/2$ and $\alpha=\frac{1}{2}$.}
\label{fig:GrafSolOmStressRZOm_DD_OmegaPi_2Alpha1_2} 
\end{figure}
Fig.~\ref{fig:GrafStressTZOm0_DD_OmegaPi_2Alpha1_2} shows how the approximations of the derivative $r^{-1}u_{2,\theta}(r,0)$ and  $r^{-1}u_{2,\theta}(r,\pi/2)$ change with $S_2$.  
 Note that for $r^{-1}u_{2,\theta}(r,\pi/2)$   the solutions   overlap for $S_{1}$ and $S_{2}$.

\begin{figure}[H]
\centering
\subfloat[]{\includegraphics[scale = 0.6]{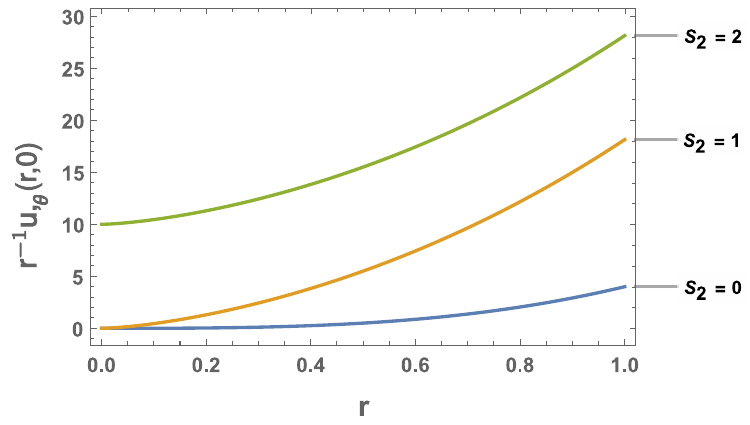}}\hspace{0.5cm}
\subfloat[]{\includegraphics[scale = 0.6]{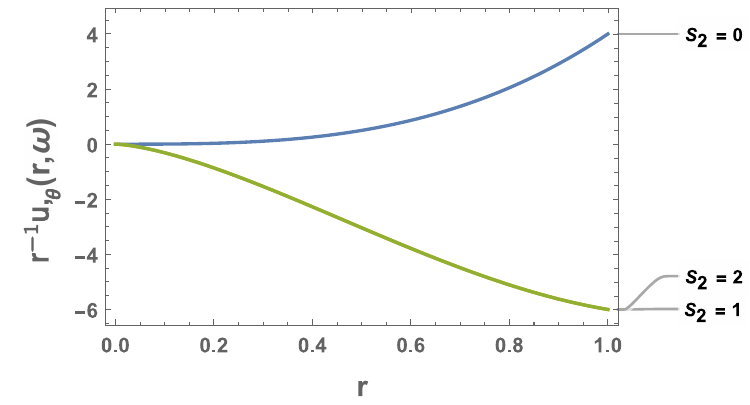}}
 \caption{Plots of approximations of (a) $r^{-1}u_{2,\theta}(r,0)$   and (b) $r^{-1}u_{2,\theta}(r,\pi/2)$ with  increasing $S_2$,  for  $\omega=\pi/2$ and $\alpha=\frac{1}{2}$.}
\label{fig:GrafStressTZOm0_DD_OmegaPi_2Alpha1_2}
\end{figure}
 \end{itemize}
 
 In general, for $j\in \N$ we have 
 \[ u_{j}(r,\theta) =  u_{j}^{(0)}(r,\theta) + u_{j}^{(1)}(r,\theta) +   u_{j}^{(2)}(r,\theta) + u_{j}^{(3)}(r,\theta),\]
 where the main and shadow terms with their respective coefficients are
  \begin{align*}
   u_{j}^{(0)}(r,\theta) & = \as_{j,0}^{(0)}  r^{2 j} \sin (2 \theta  j),   & \as_{j,0}^{(0)}  & =  1, \\
   u_{j}^{(1)}(r,\theta) & = \as_{j,1}^{(0)}  r^{2 j-\frac{3}{2}} \sin \left(\theta  \left(2 j-\frac{3}{2}\right)\right), &  \as_{j,1}^{(0)}  & = 2 \sqrt{2} j, \\
   u_{j}^{(2)}(r,\theta) & = \as_{j,2}^{(0)} r^{2 j-3} \sin (\theta  (2 j-3)), &   \as_{j,2}^{(0)}  & = 4 j^2-3 j  .   
 \end{align*}

  To compute the eigensolution energy in a neighbourhood of the corner tip   we apply  \eqref{energy} giving 
 \begin{align*}
 \int_{\epsilon}^{R} r^{\alpha + 1} |u(r,\omega)|^2 \df r & =2 j^2 r^{4 j- 7/2} \left(\frac{(3-4 j)^2}{8 j-7} + r^{3/2}\frac{3-4j}{2j- 1} + r^3\frac{4}{4j - 1}\right)\bigg. \bigg|_{\epsilon}^{R},
 \end{align*}
 and
 \begin{align*}
   \int_{\epsilon}^{R}\int_{0}^{\omega} & r\left( \frac{\partial u(r,\theta) }{\partial r}\right)^2 + \frac{1}{r}\left( \frac{\partial u(r,\theta) }{\partial \theta}\right)^2 \df \theta \df r   = \frac{1}{12} j r^{4 j} \left(\frac{16 j \left((3-2 j) r^{3/2} + \frac{4 (4 j-3) r^3}{8 j-3}+\frac{2 (2 j-3) (3-4 j)^2}{8 j-9}\right)}{r^{9/2}}\right. \\
     & + \  \left. \frac{3 \pi  \left(4 j (4 j-3) r^3+j (2 j-3) (3-4 j)^2+2 r^6\right)}{r^6}\right)\bigg. \bigg|_{\epsilon}^{R}.
 \end{align*}
 From the integrals above, it is easy to see that the energy is finite for $j \geq 2$.  In the case $j = 1$, the power of $r$ is negative and when $\epsilon\rightarrow 0$ the obtained energy is infinite, which is in agreement with Table~\ref{TblDD2}.

\subsection{Graphics for \texorpdfstring{$\alpha = -1$}{a = -1}}
Consider, as an example,  the inner corner angle  $\omega = \pi/2$. To find the roots of \eqref{trascendental}   the command \verb|FindRoot| of Mathematica \cite{Wolfram1991} is used.  3D plots of the eigensolution $u_1$ and its derivatives for $\gamma = 1/2$ are shown in   
 Fig.~\ref{fig:GrafSolStress_DR_OmegaPi_2Alpha-1}.
\begin{figure}[H]
  \centering
  \subfloat[]{\includegraphics[width=0.33\textwidth]{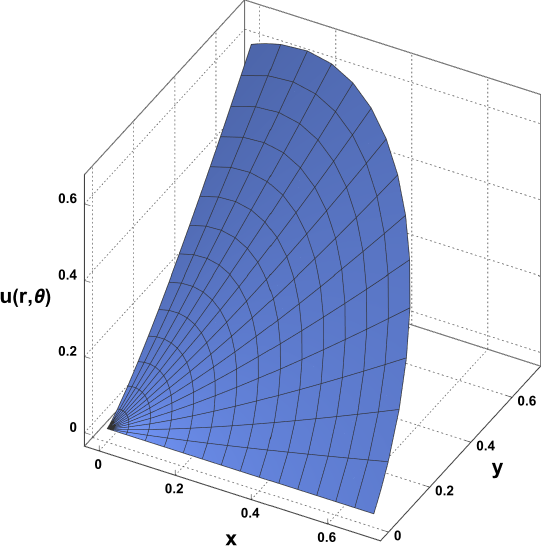}}
  \subfloat[]{\includegraphics[width=0.33\textwidth]{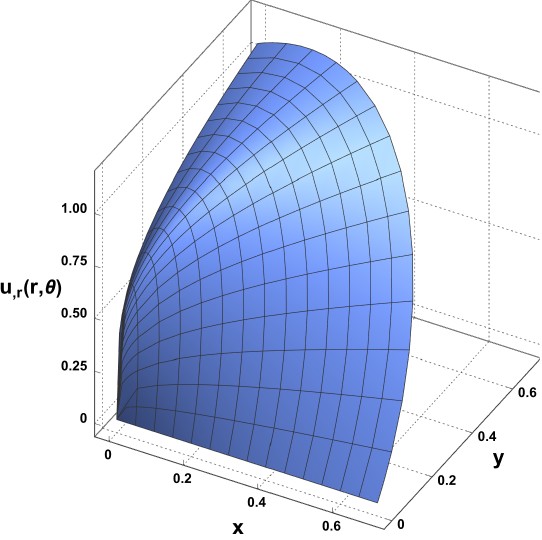}}
  \subfloat[]{\includegraphics[width=0.33\textwidth]{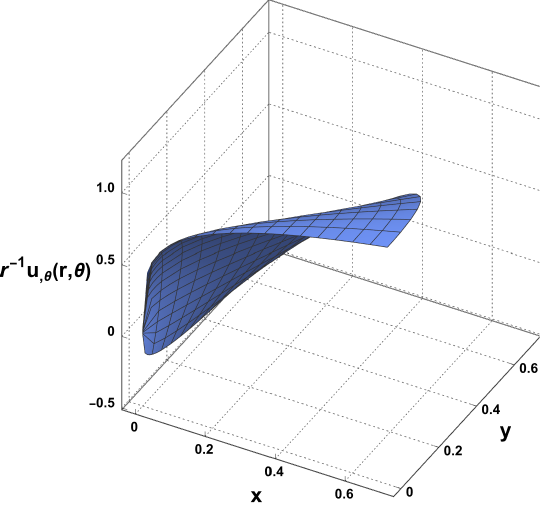}}
  \caption{3D plots of the eigensolution (a)  $u_1$ and its derivatives (b) $u_{1,r}$ and (c) $r^{-1} u_{1,\theta}$ associated to $\lambda_{1}$, for $\omega=\pi/2$, $\alpha = -1$, and $\gamma = 1/2$.
 }
  \label{fig:GrafSolStress_DR_OmegaPi_2Alpha-1}
\end{figure}

The first five roots of the transcendental eigenequation~\eqref{trascendental} calculated for several values of $\gamma$ are summarised in Table \ref{Tabalpha-1}, and the first three of them are also  shown in Fig. \ref{Figalpha-1}.
\begin{table}[H]
	\begin{center}
\begin{tabular}{| c | c || c || c |}
	\hline 
            & $\gamma =  1$   &  $\gamma = 2 $  &  $\gamma = 1/2$ \\
  $j\in \N$ & $\lambda_{j}$   &  $\lambda_{j} $  &  $\lambda_{j} $ \\
  \hline  & & & \\[-1.5 mm]
   $1$ & $1.395773844$ &  $1.575274586$ &  $1.243401927$ \\
   $2$ & $3.193207935$ &  $3.343211251$ &  $3.101747463$ \\
   $3$ & $5.122730124$ &  $5.232427165$ &  $5.062670665$ \\
   $4$ & $7.089212594$ &  $7.173105492$ &  $7.045106072$ \\
   $5$ & $9.069907943$ &  $9.137183491$ &  $9.035194103$\\
   \hline
\end{tabular}
\caption{Five first roots of $\tan(\lambda_{j}\omega) + \frac{\lambda_{j}}{\gamma} = 0$ . }
\label{Tabalpha-1}
\end{center}
\end{table}
\begin{figure}[H]
  \centering
  \includegraphics[scale = 0.6]{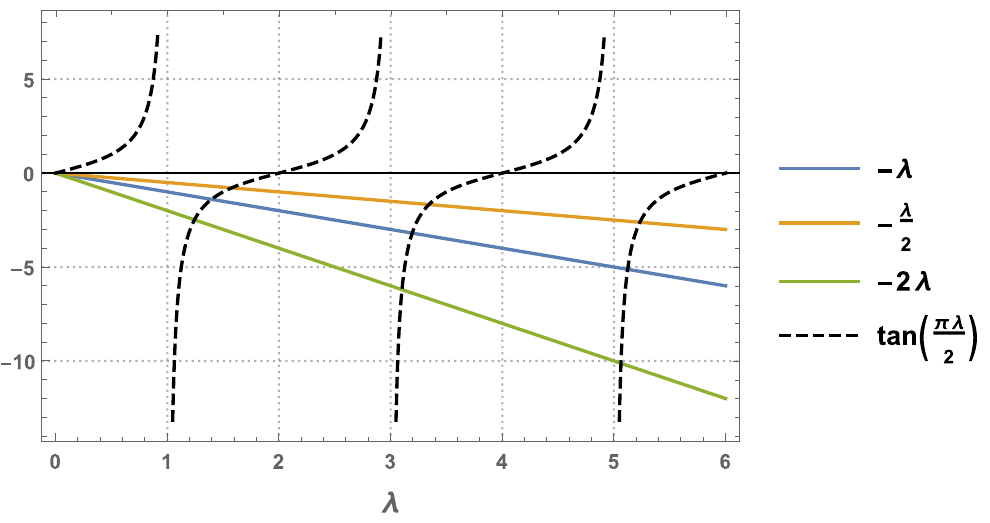}
  \label{fig:GrafFuncionTranscendente} 
  \caption{Graph of intersection between $\tan(\lambda\omega )$ and $-\lambda/\gamma$, for several values of $\gamma$. }
  \label{Figalpha-1}
\end{figure}
We can observe that the relationship $(2j-1)\frac{\pi}{2\omega} < \lambda_{j} < j\frac{\pi}{\omega}$ in~\eqref{trascendental} is fulfilled. 

\section{An application to fracture mechanics. A bridged crack problem}

The following application of the previous results to fracture mechanics is related to the original motivation of the present work as described in Section~\ref{sec:DR_definition}. Singularities in the derivatives of  displacement $u$, analysed in the previous sections, determine also singularities in stresses, by means of the linear elastic constitutive law. Notably, stress singularities are a critical feature in fracture mechanics studies.
In the following we will focus on a model problem of a whole plane with a semi-infinite crack bridged
by a continuous distribution of linear (Winkler) springs with power-law variation of spring
stiffness in antiplane mode, also called Mode III. As schematically indicated in Fig.~\ref{fig:BridgedCrack}, such a crack problem, can be reduced to a D-R problem in a half-plane. Thus, in the present notation, we  consider a D-R corner problem with $\omega=\pi$ and $K(r) = K_0 \left(\frac{r}{a}\right)^{\alpha}= \kappa r^\alpha$ with $\alpha\in\mathbb{R}$. 
\begin{figure}[H]
\centering
 \includegraphics[width=1\textwidth]{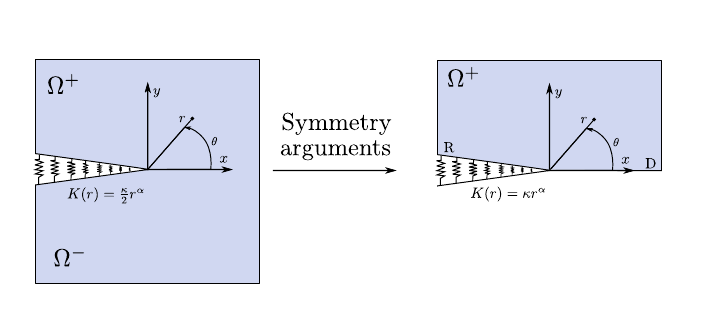} 
\caption{Mode III bridged crack problem reduces to a D-R problem for half-plane: $\omega = \pi$ and $K(r) = \kappa r^\alpha$.}
\label{fig:BridgedCrack}
\end{figure}

The stress singularity analysis for Mode III bridged cracks with power-law variation of spring stiffness is performed, identifying three characteristic regimes:
\begin{itemize}
\item For $\alpha>-1$. The recursive D-N procedure  converges, yielding singularity exponents (eigenvalues) 
\begin{equation*}
\lambda_j=(2j-1)\frac{\pi}{2\omega}=j-\frac{1}{2}.
\end{equation*}
Thus, for $j=1$, $\lambda_1 = \frac{1}{2}$. The corresponding singular eigensolution is
\begin{equation*}
u_1(r,\theta)=\sqrt{r} \sin(\theta/2 )+\text{more regular shadow terms}=O(\sqrt{r}).
\end{equation*} 
This result represents the \textit{classical square root stress singularity at the crack tip} as considered in Linear Elastic Fracture Mechanics (LEFM).

\item For $\alpha<-1$, the recursive D-D procedure  converges, leading to
\begin{equation*}
\lambda_{j} = j\frac{\pi}{\omega}=j. 
\end{equation*} 
Thus, for $j=1$, $\lambda_1=1$. The corresponding eigensolution takes de form
\begin{equation*}
  u_1(r,\theta)=r\sin(\theta )+\text{more regular shadow terms}=O(r).
\end{equation*}
In this case, the stresses at the bridged-crack tip are continuous with no stress singularity.

\item For $\alpha=-1$, a closed form  solution is available, but the singularity exponent $\lambda_{j}$ is determined by numerically solving a transcendental equation
\begin{equation}
  \tan(\lambda_{j}\pi) + \frac{\lambda_{j}}{\gamma} = 0 \quad  \text{with }   (2j-1)\frac{\pi}{2\omega}=j-\frac{1}{2}< \lambda_{j} < j\frac{\pi}{\omega}=j.
\end{equation}
Thus, for $j=1$, $\frac{1}{2}<\lambda_1(\gamma)<1$.  Therefore, the corresponding eigensolution takes de form
\begin{equation*}
u_{1}(r,\theta) = r^{\lambda_{1}} \sin(\lambda_{1} \theta)=O(r^{\lambda_{1}} ).
\end{equation*}
This represents a \textit{weak stress-singularity at the bridged-crack tip.} This particular case was previously studied by Ueda et al. \cite{Ueda2006}.
\end{itemize}

In summary, the analysis determines three characteristic regimes based on the value $\alpha$, see Fig.~\ref{StressSingBridged}.
 \begin{itemize}
  \item  $\alpha>-1$ with \textit{classical crack-tip  stress singularity}
  \item $\alpha=-1$ with \textit{weak  stress singularity at the crack tip}  
  \item  $\alpha<-1$ with \textit{no stress singularity at the crack tip, with  continuous stresses}. 
  \end{itemize}

  \begin{figure}[H]
    \includegraphics[width=1\textwidth]{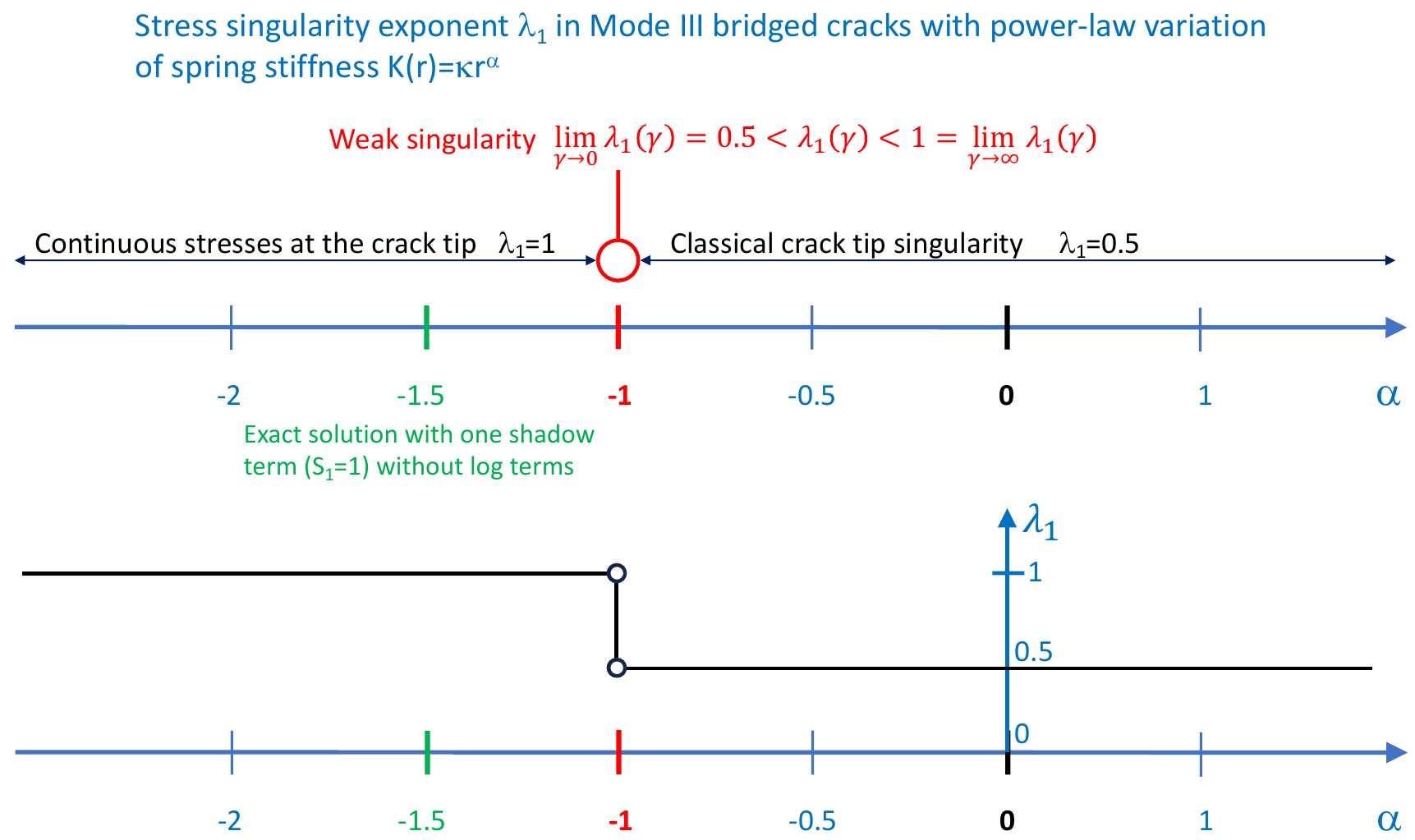}
    \caption{Overview of stress singularities in bridged cracks.}
    \label{StressSingBridged}
   \end{figure}
  
An example for Mode III bridged crack with $\alpha=-\frac{3}{2}$ is studied  by the recursive D-D approach in Section \ref{subsec:GraphDD}, where the expression of the eigensolution $u_1$ in  \eqref{ex:DD_omega_pi_alpha-3/2} yields a continuous traction along the $x$-axis, cf. \eqref{ex:DN_omega_pi_alpha-3/2}, 
\begin{equation}
\sigma_{yz}(x,y=0)  = \begin{cases}
 1-\frac{3}{2} \sqrt{x} & \text{  for  }   x\geq 0,\\
 1 & \text{  for  }  x\leq 0.
\end{cases}
\end{equation} 
A similar study for $\alpha=-\frac{5}{3}$ 
provides an approximation of the eigensolution $u_1$ in  \eqref{DDu1-5/3}, leading to analogous conclusions.

\section{Concluding remarks}
An original recursive methodology is developed to compute singular eigensolutions of corner problems for the Laplace equation with the homogeneous Dirichlet boundary condition and the homogeneous Robin boundary condition with power-law variation of its coefficient. A key advantage of this methodology is its simplicity, since it does not need advanced mathematical concepts, and its suitability for implementation in computer algebra software. The latter feature is especially relevant for applications of the present results in physics and engineering, in view of the structure of these singular eigensolutions: the sum of a main term and a finite or infinite series of the associated higher-order terms whose expressions can be long and complicated. 

The present recursive methodology is general in the sense that it covers the full ranges of both the inner angle of corner domain   $0<\omega\leq 2\pi$ and the exponent in power-law  variation of the coefficient in the Robin boundary condition  $\alpha\in\R$. Some previous works covered particular cases, such as Ueda et al. \cite{Ueda2006} derived singular eigensolutions for  $\omega=\pi$ and $\alpha=-1$, and Jimenez-Alfaro et al. \cite{Jimenez-Alfaro2020} derived singular eigensolutions for  $0<\omega\leq 2\pi$ but only for $\alpha=0$.

The present approach determines for any combination of $\omega$ and $\alpha$ whether the series of shadow terms is finite or infinite, and in the case that it is finite which is the number of shadow terms, and also if the shadow terms include just power-law terms or also power-logarithmic terms and how many of them, see Tables~\ref{TblDN1} and~\ref{TblDD1}.


Once the double asymptotic series of the main and the associated shadow terms for the D-R corner problems is constructed and the critical pairs $(\omega,\alpha)$  are identified (see Tables~\ref{TblDN1} and~\ref{TblDD1}), the question of how to construct stable asymptotic expansions for the present problem when the corner angle $\omega$ varies can be posed, similar  to the stable asymptotic expansions introduced in \cite{MazyaRossmann1992} for the D-D corner problems.

The present approach and results might  be applied in the domain decomposition methods to solve BVPs by using Robin interface condition, and in the numerical solution of contact problems by using penalty method.
    
\section{Acknowledgments}%
This research  was partially supported by the Spanish Ministry of Science and Innovation and
European Regional Development Fund (PID2021-123325OB-I00).
The research of the first author (NP-L) was funded under Grant QUALIFICA (PROGRAMA:\@ AYUDAS A ACCIONES COMPLEMENTARIAS DE I+D+i) by Junta de Andalucía grant number QUAL21 005 USE and the funding received from the European Union’s Horizon 2020 research and innovation programme under the Marie Sklodowska-Curie grant agreement No 101034297 – Project Energy for Future.
The third author (SJ-A) acknowledges the Iberdrola Foundation under the Marie Sklodowska-Curie Grant Agreement No 101034297.


\appendix

\section{Basic results}\label{AppA}
\begin{rmkAppndx}
When we consider an irreducible fraction, $\frac{p}{q}$, then we have three possibilities to write this irreducible fraction, i.e, there are $p',q' \in \N$ such that $\frac{p}{q}$ has the following form:
\[\frac{2p'-1}{2q'},\qquad\frac{2p'}{2q'-1},\quad \text{or}\quad \frac{2p'-1}{2q'-1}\]
which we can reduce to two in following form
\[\frac{2p'-1}{2q'},\quad \text{or} \quad \frac{p'}{2q'-1}. \] 
\end{rmkAppndx}

\begin{prpAppndx}\label{PrpB0}
Let $k,q, p\in\N$, such that $\frac{2p-1}{2q}$ is an irreducible fraction. Then, $\sin(\frac{k}{q}(2p-1)\frac{\pi}{2})\neq 0$,  for all $k\leq q$. 
\end{prpAppndx}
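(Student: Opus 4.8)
The plan is to reduce the trigonometric statement to an elementary divisibility fact. Writing the argument as $\frac{k}{q}(2p-1)\frac{\pi}{2} = \pi\,\frac{k(2p-1)}{2q}$ and recalling that $\sin(\pi x)=0$ if and only if $x\in\Z$, the claim $\sin\!\left(\frac{k}{q}(2p-1)\frac{\pi}{2}\right)\neq 0$ becomes equivalent to $\frac{k(2p-1)}{2q}\notin\Z$, i.e.\ to $2q \nmid k(2p-1)$. So it suffices to establish $2q \nmid k(2p-1)$ for every integer $k$ with $1\le k\le q$.

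First I would turn the hypothesis into a coprimality statement: that $\frac{2p-1}{2q}$ is irreducible means precisely $\gcd(2p-1,2q)=1$. By Euclid's lemma, a number coprime to $2p-1$ that divides the product $k(2p-1)$ must divide $k$; hence $2q \mid k(2p-1)$ holds if and only if $2q\mid k$. This equivalence is the crux of the argument.

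Then I would invoke the range restriction on $k$. For $1\le k\le q$ one has $0<k\le q<2q$, so $2q$ cannot divide $k$; by the equivalence just obtained, $2q\nmid k(2p-1)$, and therefore the sine is nonzero, which is exactly the assertion.

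Since the statement is genuinely elementary, I do not anticipate a real obstacle. The only step meriting care is the coprimality reduction: one should check that irreducibility of $\frac{2p-1}{2q}$ is exactly $\gcd(2p-1,2q)=1$ and that this legitimately upgrades $2q\mid k(2p-1)$ to $2q\mid k$ through Euclid's lemma. Incidentally, since $2p-1$ is odd, irreducibility here is equivalent to $\gcd(2p-1,q)=1$, but the argument does not require this refinement.
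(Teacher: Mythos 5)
Your proof is correct and takes essentially the same route as the paper's: both reduce the claim to showing that $\frac{k(2p-1)}{2q}$ is not an integer and then use the coprimality of $2p-1$ and $2q$ supplied by irreducibility. The only (cosmetic) difference is that you handle all $1\le k\le q$ uniformly via $2q\nmid k$, whereas the paper evaluates the boundary case $k=q$ separately as $\sin\left((2p-1)\frac{\pi}{2}\right)=(-1)^{p-1}$ and applies the divisibility argument only for $k<q$.
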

\begin{proof}
For $k = q$ we have $\sin((2p-1)\frac{\pi}{2})= (-1)^{p-1}\neq 0$. For the case $k < q$ with $q \geq 2$, we note that $\frac{2p-1}{q}$ is irreducible and $k$ is not a multiple of $q$, then $\frac{k}{2q}(2p-1)\in\Q\setminus \N$. Therefore, $\sin\left(\frac{k}{q}(2p-1)\frac{\pi}{2}\right)\neq 0$.
\end{proof}

\begin{prpAppndx}\label{PrpB1}
	Let $j,k\in \N$ and $\alpha \in \R$. If $(2j-1)\frac{\pi}{2\omega} + k(\alpha +1) = 0,$ then $\sin(\omega k(\alpha + 1)) \neq 0$. 
\end{prpAppndx}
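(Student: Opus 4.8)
The plan is to prove this by direct substitution, since the hypothesis pins down the value of $k(\alpha+1)$ exactly. Note first that the quantity $\lambda_j = (2j-1)\frac{\pi}{2\omega}$ appearing in the hypothesis is precisely the D-N eigenvalue, so the assumption is exactly condition~\eqref{CondDN_1}, namely $\lambda_j + k(\alpha+1) = 0$. The goal is then to show that condition~\eqref{CondDN_2} cannot hold simultaneously, which is the content claimed in Remark~\ref{Rmk1}.

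First I would solve the hypothesis for the relevant product: from $(2j-1)\frac{\pi}{2\omega} + k(\alpha+1) = 0$ it follows immediately that
\[
k(\alpha+1) = -(2j-1)\frac{\pi}{2\omega}.
\]
The key step is to multiply through by $\omega$, because $\omega k(\alpha+1)$ is precisely the argument of the sine we need to control. This gives
\[
\omega\, k(\alpha+1) = -(2j-1)\frac{\pi}{2}.
\]
The crucial observation is that $2j-1$ is an odd integer for every $j\in\N$, so the right-hand side is an odd integer multiple of $\frac{\pi}{2}$.

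Finally I would evaluate the sine at this argument. Since $\sin\!\left((2j-1)\frac{\pi}{2}\right) = (-1)^{j-1}$, which never vanishes, we conclude
\[
\sin\bigl(\omega\, k(\alpha+1)\bigr) = \sin\!\left(-(2j-1)\frac{\pi}{2}\right) = (-1)^{j} \neq 0,
\]
as required. There is really no obstacle in this argument: the whole proof is a substitution of the hypothesis into the target expression, followed by the elementary fact that the sine of an odd multiple of $\pi/2$ equals $\pm 1$. The only point worth stating carefully is that the oddness of $2j-1$ is what guarantees we land exactly on a zero of the cosine (equivalently, an extremum of the sine) rather than on a zero of the sine, which is what makes the two inconsistency conditions~\eqref{CondDN_1} and~\eqref{CondDN_2} mutually exclusive.
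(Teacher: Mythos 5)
Your proof is correct and follows exactly the paper's argument: substitute the hypothesis to get $\omega k(\alpha+1)=-(2j-1)\tfrac{\pi}{2}$ and observe that the sine of an odd multiple of $\pi/2$ is $\pm 1$. (Your sign $(-1)^{j}$ is in fact the correct one; the paper writes $(-1)^{j+1}$, a harmless slip that does not affect the nonvanishing conclusion.)
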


\begin{proof}
It holds $\omega k(\alpha + 1 ) = -(2j-1)\frac{\pi}{2} $. Then, $\sin(\omega k(\alpha + 1))= \sin(-(2j-1)\frac{\pi}{2})= (-1)^{j+1}\neq 0$ for all $j\in\N$. 
\end{proof}

\begin{prpAppndx}\label{PrpB2}
	Let $j,k\in \N$ and $\alpha \in \R$. If $(2j-1)\frac{\pi}{2\omega} + k(\alpha +1) = 0,$ then $\cos(\omega(k-1)(\alpha + 1)) \neq 0$  
\end{prpAppndx}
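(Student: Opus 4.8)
The plan is to reduce everything to the single hypothesis $(2j-1)\frac{\pi}{2\omega} + k(\alpha+1) = 0$. Multiplying this by $\omega$ gives the key identity $\omega k(\alpha + 1) = -(2j-1)\frac{\pi}{2}$, which is exactly the quantity already exploited in the proof of Proposition~\ref{PrpB1}. Dividing the hypothesis by $k$ instead records the companion identity $\omega(\alpha+1) = -\frac{(2j-1)\pi}{2k}$. The strategy is then to write $\omega(k-1)(\alpha+1) = \omega k(\alpha+1) - \omega(\alpha+1)$ and expand the cosine by the angle-difference formula, so that the two identities above can be substituted term by term.

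First I would apply $\cos(A - B) = \cos A\cos B + \sin A \sin B$ with $A = \omega k(\alpha+1)$ and $B = \omega(\alpha+1)$. Using the first identity, $\cos A = \cos\!\left((2j-1)\tfrac{\pi}{2}\right) = 0$ and $\sin A = \sin\!\left(-(2j-1)\tfrac{\pi}{2}\right) = (-1)^{j}$, since $2j-1$ is odd. The $\cos A\cos B$ term therefore drops out and one is left with the clean expression
\[
\cos\!\left(\omega(k-1)(\alpha+1)\right) = (-1)^{j}\,\sin\!\left(\omega(\alpha+1)\right).
\]

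Next I would insert the companion identity to obtain $\sin(\omega(\alpha+1)) = -\sin\!\left(\frac{(2j-1)\pi}{2k}\right)$, so that the claim reduces to showing $\sin\!\left(\frac{(2j-1)\pi}{2k}\right) \neq 0$. This sine vanishes precisely when $\frac{2j-1}{2k}$ is an integer, i.e. when $2k$ divides $2j-1$; but $2k$ is even while $2j-1$ is odd, so no such divisibility can occur for any $j,k \in \N$. Hence the sine is nonzero, and consequently $\cos\!\left(\omega(k-1)(\alpha+1)\right) = (-1)^{j+1}\sin\!\left(\frac{(2j-1)\pi}{2k}\right) \neq 0$.

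I do not anticipate a genuine obstacle, as the result is ultimately a parity observation. The only point requiring a little care is the bookkeeping of signs and the evaluation of $\sin$ and $\cos$ at odd multiples of $\frac{\pi}{2}$; keeping the two derived identities separate — one from multiplying by $\omega$, the other from dividing by $k$ — is what makes the angle-difference expansion collapse cleanly rather than producing a messier transcendental condition on $\omega(\alpha+1)$.
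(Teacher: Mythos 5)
Your proof is correct and follows essentially the same route as the paper's: both decompose $\omega(k-1)(\alpha+1)$ as $\omega k(\alpha+1)-\omega(\alpha+1)$, kill the $\cos$ term using $\cos\bigl((2j-1)\tfrac{\pi}{2}\bigr)=0$, and reduce the claim to $\sin\bigl(\tfrac{(2j-1)\pi}{2k}\bigr)\neq 0$. Your parity justification (an odd numerator over an even denominator is never an integer) is in fact slightly sharper than the paper's appeal to irreducibility of $\tfrac{2j-1}{2k}$, which need not hold, e.g.\ for $j=5$, $k=3$.
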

\begin{proof}
It holds $ \omega k(\alpha +1) =-(2j-1)\frac{\pi}{2}$.  Then,
\[\cos(\omega(k-1)(\alpha + 1)) = \cos((2j-1)\frac{\pi}{2} + \omega(\alpha + 1)) = -(-1)^{j+1}\sin(\omega(\alpha+1)) =  (-1)^{j} \sin\left(-\frac{(2j-1)}{2k} \pi\right) \neq 0,\] for all $j,k\in \N$, because  $\frac{2j-1}{2k}$ is an irreducible fraction.   
\end{proof}

\begin{prpAppndx}\label{PrpB3}
	Let $a\in \R$ and $n\in\N_{0}$. Then, 
  \begin{equation*}
      \int r^a \log^n(r)\, dr = 
\begin{cases}
\displaystyle
r^{a+1} \sum_{j = 0}^{n} (-1)^j \frac{n!}{(n-j)!} \frac{\log^{n-j}(r)}{(a+1)^{j+1}} + C, & \text{if } a \neq -1, \\[1.2em]
\displaystyle
\frac{\log^{n+1}(r)}{n+1} + C, & \text{if } a = -1.
\end{cases}   
  \end{equation*}  
  where $C$ is a constant. 
\end{prpAppndx}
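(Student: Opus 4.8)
The plan is to prove the formula by induction on $n$, treating the exceptional case $a=-1$ separately, since it does not follow the same closed form.

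First I would dispose of the case $a=-1$ directly, where no induction is needed. With the substitution $w=\log(r)$, $\df w = \df r/r$, the integral $\int r^{-1}\log^n(r)\,\df r$ becomes $\int w^n\,\df w = \frac{w^{n+1}}{n+1}+C = \frac{\log^{n+1}(r)}{n+1}+C$, which is exactly the claimed expression, valid for every $n\in\N_0$.

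For $a\neq -1$ I would argue by induction on $n$. The base case $n=0$ is the elementary integral $\int r^a\,\df r = \frac{r^{a+1}}{a+1}+C$, and the right-hand side with $n=0$ collapses to its single $j=0$ term $r^{a+1}\frac{\log^0(r)}{(a+1)}$, so the two agree. For the inductive step, assuming the formula for $n-1$, I would integrate by parts with $u=\log^n(r)$ and $\df v = r^a\,\df r$, so that $\df u = n\log^{n-1}(r)\,\df r/r$ and $v = r^{a+1}/(a+1)$, giving
\[
\int r^a \log^n(r)\,\df r = \frac{r^{a+1}}{a+1}\log^n(r) - \frac{n}{a+1}\int r^a \log^{n-1}(r)\,\df r.
\]
Substituting the induction hypothesis into the remaining integral then reduces everything to an algebraic identity.

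The only real work is the bookkeeping in this last step, which I expect to be the main (though entirely routine) obstacle. The boundary term $\frac{r^{a+1}}{a+1}\log^n(r)$ is precisely the $j=0$ summand of the target expression. For the rest, multiplying the induction-hypothesis sum (indexed $j=0,\dots,n-1$) by $-n/(a+1)$ produces terms with power $(a+1)^{-(j+2)}$, coefficient $(-1)^{j+1}\,n\cdot(n-1)!/(n-1-j)!$, and logarithm $\log^{n-1-j}(r)$. Reindexing by $j'=j+1$ turns $n\cdot(n-1)!$ into $n!$, $(n-1-j)!$ into $(n-j')!$, and the denominator exponent into $j'+1$, so these become exactly the summands $j'=1,\dots,n$ of the claimed formula; merging them with the $j=0$ term recovers the full sum from $0$ to $n$ and closes the induction. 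As an alternative that avoids the induction entirely, one may simply differentiate the proposed closed form and verify by the product rule that the successive $\log$-power and $(a+1)$-power contributions telescope, leaving only $r^a\log^n(r)$.
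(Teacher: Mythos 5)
Your proof is correct and follows essentially the same route as the paper, whose entire argument is the one-line remark that the formula follows from integrating by parts $n$ times; your induction on $n$ with a single integration by parts per step is just the rigorous form of that iteration, with the reindexing bookkeeping (and the separate substitution for $a=-1$) spelled out explicitly.
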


\begin{proof}
	The proof follows from integration by parts $n$ times. 
\end{proof}

\begin{prpAppndx}\label{PrpB4}
	The series $\sum_{n = 0}^{\infty} (r\log(r))^{n}$ converges absolutely, at least for $0< r < 1$.
\end{prpAppndx}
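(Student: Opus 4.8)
The plan is to recognize the series $\sum_{n=0}^{\infty}(r\log(r))^n$ as a geometric series with common ratio $x = r\log(r)$, so that absolute convergence will follow from the standard criterion $|x|<1$ together with a uniform bound on $|r\log(r)|$ over $(0,1)$. First I would fix $r\in(0,1)$ and observe that since $\log(r)<0$ there, the ratio $x=r\log(r)$ is negative; hence absolute convergence is equivalent to showing $|r\log(r)|<1$.

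The key step is a one-variable extremum analysis of the function $\phi(r):=r\log(r)$ on the interval $(0,1)$. Differentiating gives $\phi'(r)=\log(r)+1$, whose unique zero on $(0,1)$ is $r=e^{-1}$. Since $\phi'(r)<0$ for $r<e^{-1}$ and $\phi'(r)>0$ for $r>e^{-1}$, this critical point is a global minimum of $\phi$ on $(0,1)$, with value $\phi(e^{-1})=e^{-1}\log(e^{-1})=-e^{-1}$. Using $\lim_{r\to 0^+}r\log(r)=0$ and $\phi(1)=0$, I conclude that the range of $\phi$ on $(0,1)$ is contained in $[-e^{-1},0)$, so that
\begin{equation*}
|r\log(r)|\le e^{-1}<1\qquad\text{for all } r\in(0,1).
\end{equation*}

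With this bound in hand, the conclusion is immediate: for each fixed $r\in(0,1)$ the geometric series $\sum_{n\ge 0}|r\log(r)|^n$ is dominated by the convergent geometric series $\sum_{n\ge 0}e^{-n}$, so the original series converges absolutely. I would note in passing that the bound $e^{-1}$ is uniform in $r$, which yields absolute convergence on all of $(0,1)$ at once rather than merely pointwise. There is no genuine obstacle here; the only point requiring care is the behaviour of $\phi$ near the endpoint $r=0$, which is handled by the standard limit $r\log(r)\to 0$, and the verification that the interior critical point is indeed the global minimum rather than a maximum.
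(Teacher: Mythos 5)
Your proof is correct and rests on the same underlying idea as the paper's: the series is geometric in $r\log(r)$, so everything reduces to the ratio having absolute value less than $1$. The only difference is that you explicitly justify the bound $|r\log(r)|\le e^{-1}<1$ on $(0,1)$ via the extremum analysis of $\phi(r)=r\log(r)$, whereas the paper's proof writes the geometric partial sum and simply asserts $\lim_{m\to\infty}r^{m}|\log(r)|^{m}=0$ without recording why the ratio is below $1$; your version supplies exactly the detail the paper leaves tacit, and is if anything the more complete of the two.
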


\begin{proof}
	The partial sums are given by $\sum_{n = 0}^{m} (r|\log(r)|)^{n} =  \frac{1-(r|\log(r)|)^m}{1-r|\log(r)|}$. Taking the limit over $m$ we deduce that 
	\[ \lim_{m\to \infty}r^m |\log(r)|^m  = 0, \quad\text{with}\quad 0< r <1,\] and, then, our result. 
\end{proof}

\section{Recursive structure of the linear systems}\label{AppB}
We rewrite the linear systems in~\eqref{SystDN} and~\eqref{SystDD}.  For this, we begin from the equalities~\eqref{SumSystDN} and~\eqref{SumSystDD}. Since the calculations are similar, we will focus on the D-N case. Using the angle sum on the right-hand side of~\eqref{SumSystDN} it holds:
\begin{align*}
	&\sum_{m=0}^{L_{j,k}} \log^{m}(r) \sum_{l=m}^{L_{j,k}} a_{j,k}^{(l)} \binom{l}{m} \omega^{l-m} \left[ \frac{(l-m)}{\omega} \cos\left(  \omega k(\alpha + 1) + \frac{\pi}{2} (l-m)\right) \right. \\
	& \hspace{5cm} \left. - (\lambda_{j} + k(\alpha + 1))\sin\left( \omega k(\alpha + 1) + \frac{\pi}{2} (l-m)\right)\right] \\
	& = -\gamma \sum_{m=0}^{L_{j,k-1}} \log^{m}(r)  \sum_{l=m}^{L_{j,k-1}} a_{j,k-1}^{(l)} \binom{l}{m}  \omega^{l-m} \left[ \cos(\omega(\alpha +1)) \cos\left(  \omega k(\alpha + 1) + \frac{\pi}{2} (l-m)\right) \right. \\ 
   & \hspace{6cm} + \ \left. \sin(\omega(\alpha +1))\sin\left( \omega k(\alpha + 1) + \frac{\pi}{2} (l-m)\right)\right].
\end{align*}
For each $j$ and $k$ fixed we can interpret, the previous polynomial sum, as a system
\begin{align}\label{RrcSystDN}
\left( \frac{1}{\omega}\bm{\tilde {R}}_{j,k} - (\lambda_{j}  + k(\alpha + 1)) \bm{N}_{j,k} \right) \bm{a}_{j,k} = -\gamma\left(\cos(\omega(\alpha +1)) \bm{R}_{j,k} \frac{}{} + \frac{}{} \sin(\omega(\alpha +1)) \bm{N}_{j,k} \right) \bm{a}_{j,k-1},
\end{align}
where $\bm{a}_{j,k}$ is defined as in the system~\eqref{SystDN},
\begin{align*}
	 \bm{R}_{j,k} &= \left[ \rho_{j,k}^{(m,l)}  \right]_{m = 0, \ldots, L_{j,k};\ l = m,\ldots, L_{j,k} }	 = 
	\begin{pmatrix} 
		\rho_{j,k}^{(0,0)} & \rho_{j,k}^{(0,1)}  & \cdots & \rho_{j,k}^{(0,L_{j,k})}  \\[1.5mm] 
		        0         & \rho_{j,k}^{(1,1)}  & \cdots & \rho_{j,k}^{(1,L_{j,k})}  \\[1.5mm] 
		        \vdots    &  \ddots            & \ddots & \vdots                   \\[1.5mm] 
				0		  &   0                & \cdots & \rho_{j,k}^{(L_{j,k},L_{j,k})}
	\end{pmatrix},\\	
	\bm{\tilde{R}}_{j,k} &= \left[ \tilde{\rho}_{j,k}^{(m,l)}  \right]_{m = 0, \ldots, L_{j,k};\ l = m,\ldots, L_{j,k} }	 = 
	\begin{pmatrix} 
		\tilde{\rho}_{j,k}^{(0,0)} & \tilde{\rho}_{j,k}^{(0,1)}  & \cdots & \tilde{\rho}_{j,k}^{(0,L_{j,k})}  \\[1.5mm] 
		        0         & \tilde{\rho}_{j,k}^{(1,1)}  & \cdots & \tilde{\rho}_{j,k}^{(1,L_{j,k})}  \\[1.5mm] 
		        \vdots    &  \ddots            & \ddots & \vdots                   \\[1.5mm] 
				0		  &   0                & \cdots & \tilde{\rho}_{j,k}^{(L_{j,k},L_{j,k})}
	\end{pmatrix},\\	
	\bm{N}_{j,k} &= \left[ \nu_{j,k}^{(m,l)}  \right]_{m = 0, \ldots, L_{j,k};\ l = m,\ldots, L_{j,k} }	 = 
	\begin{pmatrix} 
		\nu_{j,k}^{(0,0)} & \nu_{j,k}^{(0,1)}  & \cdots & \nu_{j,k}^{(0,L_{j,k})}  \\[1.5mm] 
		        0         & \nu_{j,k}^{(1,1)}  & \cdots & \nu_{j,k}^{(1,L_{j,k})}  \\[1.5mm] 
		        \vdots    &  \ddots            & \ddots & \vdots                   \\[1.5mm] 
				0		  &   0                & \cdots & \nu_{j,k}^{(L_{j,k},L_{j,k})}
	\end{pmatrix}
\end{align*}
with  
\begin{align*}
\rho_{j,k}^{(m,l)} & =\binom{l}{m} \omega^{l-m} \cos\left(\omega k(\alpha + 1) + \frac{\pi}{2} (l-m)\right)\\ 
\nu_{j,k}^{(m,l)} & =\binom{l}{m} \omega^{l-m}  \sin\left( \omega k(\alpha + 1) + \frac{\pi}{2} (l-m)\right) \\
\tilde{\rho}_{j,k}^{(m,l)} &= (l-m)\rho_{j,k}^{(m,l)}
\end{align*}
and \[\bm{a}_{j,k-1} = \left[ a_{j,k-1}^{(0)}, a_{j,k-1}^{(1)}, \ldots, a_{j,k-1}^{(L_{j,k-1})}  \right]^\top,\quad \text{when}\quad L_{j,k} = L_{j,k-1}\] or 
\[\bm{a}_{j,k-1} = \left[ a_{j,k-1}^{(0)}, a_{j,k-1}^{(1)}, \ldots, a_{j,k-1}^{(L_{j,k-1})}, 0  \right]^\top,\quad \text{when} \quad L_{j,k} = L_{j,k-1} + 1;\] which agrees with the conditions given by~\eqref{CondDN_1} and~\eqref{CondDN_2}. 
Analogous calculations can be made for the case D-D, obtaining
\begin{align}\label{RrcSystDD}
	\bm{\Ms}_{j,k} \bm{\as}_{j,k} =  - &\frac{\cos(\omega(\alpha  + 1))}{\gamma} \left(\frac{1}{\omega} \bm{\tilde{\Ms}}_{j,k}  + (\lambda_{j}- (k-1)(\alpha +1)) \bm{\Ns}_{j,k} \right)\bm{\as}_{j,k-1}  \\ 
	- & \frac{\sin(\omega(\alpha  + 1))}{\gamma} \left( \frac{1}{\omega} \bm{\tilde{\Ns}}_{j,k}  - (\lambda_{j}- (k-1)(\alpha +1)) \bm{\Ms}_{j,k}  \right) \bm{\as}_{j,k-1} \nonumber, 
\end{align}
where, $\bm{\Ms}_{j,k}$ and $\bm{\as}_{j,k}$ are defined as in the system~\eqref{SystDD},
\begin{align*}
	\bm{\tilde{\Ms}}_{j,k} &= \left[ \tilde{\ms}_{j,k}^{(m,l)}  \right]_{m = 0, \ldots, L_{j,k}; \ l = m,\ldots, L_{j,k} }	 = 
	\begin{pmatrix} 
		\tilde{\ms}_{j,k}^{(0,0)} & \tilde{\ms}_{j,k}^{(0,1)}  & \cdots & \tilde{\ms}_{j,k}^{(0,L_{j,k})}  \\[1.5mm] 
		        0         & \tilde{\ms}_{j,k}^{(1,1)}  & \cdots & \tilde{\ms}_{j,k}^{(1,L_{j,k})}  \\[1.5mm] 
		        \vdots    &  \ddots            & \ddots & \vdots                   \\[1.5mm] 
				0		  &   0                & \cdots & \tilde{\ms}_{j,k}^{(L_{j,k},L_{j,k})}
	\end{pmatrix}\\
	\bm{\tilde{\Ns}}_{j,k} &= \left[ \tilde{\ns}_{j,k}^{(m,l)}  \right]_{m = 0, \ldots, L_{j,k};\ l = m,\ldots, L_{j,k} }	 = 
	\begin{pmatrix} 
		\tilde{\ns}_{j,k}^{(0,0)} & \tilde{\ns}_{j,k}^{(0,1)}  & \cdots & \tilde{\ns}_{j,k}^{(0,L_{j,k})}  \\[1.5mm] 
		        0         & \tilde{\ns}_{j,k}^{(1,1)}  & \cdots & \tilde{\ns}_{j,k}^{(1,L_{j,k})}  \\[1.5mm] 
		        \vdots    &  \ddots            & \ddots & \vdots                   \\[1.5mm] 
				0		  &   0                & \cdots & \tilde{\ns}_{j,k}^{(L_{j,k},L_{j,k})}
	\end{pmatrix}\\
	\bm{\Ns}_{j,k} &= \left[ \ns_{j,k}^{(m,l)}  \right]_{m = 0, \ldots, L_{j,k};\ l = m,\ldots, L_{j,k} }	 = 
	\begin{pmatrix} 
		\ns_{j,k}^{(0,0)} & \ns_{j,k}^{(0,1)}  & \cdots & \ns_{j,k}^{(0,L_{j,k})}  \\[1.5mm] 
		        0         & \ns_{j,k}^{(1,1)}  & \cdots & \ns_{j,k}^{(1,L_{j,k})}  \\[1.5mm] 
		        \vdots    &  \ddots            & \ddots & \vdots                   \\[1.5mm] 
				0		  &   0                & \cdots & \ns_{j,k}^{(L_{j,k},L_{j,k})}
	\end{pmatrix},
\end{align*}
with
\begin{align*}
	\tilde{\ms}_{j,k}^{(m,l)} &= (l-m)\ms_{j,k}^{(m,l)},\\
	\ns_{j,k}^{(m,l)} & =  \binom{l}{m} \omega^{l-m} \cos\left(\frac{\pi}{2} (l-m) - k\omega (\alpha + 1)  \right), \\  \\
	\tilde{\ns}_{j,k}^{(m,l)} & = (l-m)\ns_{j,k}^{(m,l)},
\end{align*}
and 
\[\bm{\as}_{j,k-1} = \left[ \as_{j,k-1}^{(0)}, \as_{j,k-1}^{(1)}, \ldots, \as_{j,k-1}^{(L_{j,k-1})}  \right]^\top\quad \text{when} \quad  L_{j,k} = L_{j,k-1}\] or 
\[\bm{\as}_{j,k-1} = \left[ \as_{j,k-1}^{(0)}, \as_{j,k-1}^{(1)}, \ldots, \as_{j,k-1}^{(L_{j,k-1})}, 0  \right]^\top \quad \text{when}\quad L_{j,k} =L_{j,k-1} + 1;\] which agrees with the condition given by~\eqref{CondDD}. With these recursive systems and taking in account  Tables~\ref{TblDN1}--\ref{TblDD2}, 
 it is easy to build an algorithm to find the coefficients of solutions.

\section{Solutions for D-D approach}\label{AppC}
Recall that $\Omega$ is an angular sector defined in \eqref{Omegadef}.
\begin{prpAppndx}\label{AppPrp1}
	Consider the following system
	 \begin{equation}\label{SystPrp1}
		\begin{split}
		\Delta u & = 0 ,\qquad \quad\text{in } \Omega,\\
		u(r,0)  & = 0,\qquad\quad \forall \ r>0, \\
		u(r,\omega) & =   c_{0} r^{\beta }, \quad \forall \ r>0,
		\end{split}
	\end{equation}
	where $c_{0}$ and $\beta$ are given real numbers. Then, the solution is given by 
	\[u(r,\theta) =  r^{\beta} \left[(a_{0} + a_{1} \log(r))\sin (\beta \theta )  + a_{1}\theta \cos(\beta \theta ) \right] ,\]
where $a_{0} = \displaystyle{\frac{c_{0}}{ \sin(\beta \omega )}}$ and $a_{1}=0$ when $\sin(\beta\omega)\neq 0$, and  if  $\sin(\beta\omega)= 0$, then $a_ {0}$ is arbitrary and  $a_{1} = \displaystyle{\frac{c_{0}}{ \omega\cos(\beta\omega )}}$. 
\end{prpAppndx}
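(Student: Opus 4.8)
The plan is to seek the solution within the harmonic power-logarithmic ansatz used throughout the paper, namely as a real-linear combination of the two harmonic functions $\Im\{z^{\beta}\}$ and $\Im\{z^{\beta}\log z\}$, where $z = re^{\is\theta}$ and the radial exponent $\beta$ is dictated by the power-law boundary data. Expanding these in polar coordinates gives $\Im\{z^{\beta}\} = r^{\beta}\sin(\beta\theta)$ and $\Im\{z^{\beta}\log z\} = r^{\beta}[\log(r)\sin(\beta\theta) + \theta\cos(\beta\theta)]$, so the candidate is precisely
\[
u(r,\theta) = r^{\beta}\left[(a_{0}+a_{1}\log r)\sin(\beta\theta) + a_{1}\theta\cos(\beta\theta)\right].
\]
First I would record that this $u$ is harmonic: since $z^{\beta}$ and $z^{\beta}\log z$ are holomorphic on the sector for a fixed branch of the logarithm, their imaginary parts are harmonic, and harmonicity is preserved under real-linear combinations; alternatively one verifies $\Delta u = 0$ directly via $\Delta = \partial_{rr} + r^{-1}\partial_{r} + r^{-2}\partial_{\theta\theta}$.

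Next I would dispatch the condition at $\theta = 0$. Both $\sin(\beta\theta)$ and $\theta\cos(\beta\theta)$ vanish at $\theta = 0$, so $u(r,0)=0$ holds automatically for every choice of $a_{0}$ and $a_{1}$; this is exactly why the ansatz is assembled from the $\sin$- and $\theta\cos$-type terms rather than their cosine counterparts.

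The substance of the proof is the condition at $\theta = \omega$. Evaluating there and factoring out $r^{\beta}$, the requirement $u(r,\omega) = c_{0}r^{\beta}$ becomes
\[
\bigl(a_{0}\sin(\beta\omega) + a_{1}\omega\cos(\beta\omega)\bigr) + a_{1}\sin(\beta\omega)\log r = c_{0}
\]
for all $r>0$. Since $\{1,\log r\}$ are linearly independent as functions of $r$, this splits into $a_{1}\sin(\beta\omega)=0$ together with $a_{0}\sin(\beta\omega) + a_{1}\omega\cos(\beta\omega) = c_{0}$. Here I expect the only genuine obstacle, namely the resonance dichotomy. If $\sin(\beta\omega)\neq 0$, the first equation forces $a_{1}=0$ and the second gives $a_{0} = c_{0}/\sin(\beta\omega)$, recovering the stated non-logarithmic solution. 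If instead $\sin(\beta\omega)=0$, the first equation is vacuous so $a_{0}$ is arbitrary, and the second reduces to $a_{1}\omega\cos(\beta\omega) = c_{0}$, which is solvable precisely because $\sin(\beta\omega)=0$ forces $\cos(\beta\omega)\neq 0$ (the two cannot vanish simultaneously), yielding $a_{1} = c_{0}/(\omega\cos(\beta\omega))$. I would close by noting that the logarithmic term is indispensable in the resonant case: there $\Im\{z^{\beta}\}$ vanishes identically on $\theta=\omega$ and cannot match a nonzero $c_{0}$ by scaling, whereas $\Im\{z^{\beta}\log z\}$ supplies the pure power $\omega\cos(\beta\omega)\,r^{\beta}$ on that side, its $\log r$ contribution dropping out exactly because it carries the factor $\sin(\beta\omega)=0$.
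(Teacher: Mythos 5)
Your proposal is correct and follows essentially the same route as the paper: the paper verifies the same power-logarithmic ansatz and records exactly the $2\times 2$ upper-triangular system $\bigl(\begin{smallmatrix}\sin(\beta\omega) & \omega\cos(\beta\omega)\\ 0 & \sin(\beta\omega)\end{smallmatrix}\bigr)\bigl(\begin{smallmatrix}a_0\\ a_1\end{smallmatrix}\bigr)=\bigl(\begin{smallmatrix}c_0\\ 0\end{smallmatrix}\bigr)$ that you derive by separating the constant and $\log r$ contributions on $\theta=\omega$, deferring the remaining details to Mghazli's analogous propositions. Your write-up simply makes explicit the harmonicity check, the automatic Dirichlet condition at $\theta=0$, and the resonance dichotomy on $\sin(\beta\omega)$, all of which match the paper's intended argument.
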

We note that the above coefficients solve the following system
\[\begin{pmatrix}\sin{(\beta\omega)} & \omega\cos{(\beta\omega) }  \\
	0 & \sin{(\beta\omega)} 
\end{pmatrix} \begin{pmatrix} a_{0}\\ a_{1} \end{pmatrix} = \begin{pmatrix}c_{0}\\ 0 \end{pmatrix}.\]

\begin{prpAppndx}\label{AppPrp2}
Consider the following system
	 \begin{equation}\label{SystPrp2}
		\begin{split}
		\Delta u & = 0 ,\qquad \quad\text{in } \Omega,\\
		u(r,0)  & = 0,\qquad\quad \forall \ r>0, \\
		u(r,\omega) & =   c_{1} r^{\beta }\log(r), \quad \forall \ r>0,
		\end{split}
	\end{equation}
	where $c_{1}$ and $\beta$ are  given real numbers. Then, the solution is given by 
	\[u(r,\theta) = r^{\beta} \left[ (a_{0}  + a_{1}\log(r) + a_{2}(\log ^2(r) -\theta ^2))\sin (\beta \theta ) +  ( a_{1}\theta +  a_{2} 2\theta \log(r)  )\cos(\beta \theta )\right].  \]
If  $\sin(\beta\omega)\neq 0$, then 
	\begin{align*}
		a_{0} & =  -a_{1} \omega\frac{\cos(\beta \omega )}{ \sin(\beta \omega )},\\
		a_{1} & = \frac{c_{1}}{ \sin(\beta \omega )}, \\
		a_{2} & = 0.
	\end{align*}
		
If  $\sin(\beta\omega)= 0$, then 	
\begin{align*}
a_{0} & \quad\text{is arbitrary,} \\
a_{1} &=  0,\\
a_{2} &= \frac{c_{1}}{ 2\omega\cos(\beta \omega )}. 
\end{align*}
\end{prpAppndx}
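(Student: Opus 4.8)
The plan is to exhibit the proposed $u$ as a real-linear combination of the harmonic functions $\Im\{z^{\beta}\log^{l}(z)\}$ for $l=0,1,2$, and then to fix the coefficients $a_0,a_1,a_2$ by matching the boundary data on $\Gamma_2$. First I would recall that on the angular sector $\Omega$ (with $\omega\le 2\pi$) one may choose the branch of the logarithm with $\arg z=\theta\in(0,\omega)$, so that $z^{\beta}\log^{l}(z)$ is holomorphic on $\Omega$ and hence its imaginary part is harmonic. Expanding $z^{\beta}\log^{l}(z)=r^{\beta}e^{\is\beta\theta}(\log r+\is\theta)^{l}$ by the binomial formula yields
\[
\Im\{z^{\beta}\}=r^{\beta}\sin(\beta\theta),\qquad
\Im\{z^{\beta}\log z\}=r^{\beta}\bigl[\log r\,\sin(\beta\theta)+\theta\cos(\beta\theta)\bigr],
\]
\[
\Im\{z^{\beta}\log^{2}z\}=r^{\beta}\bigl[(\log^{2}r-\theta^{2})\sin(\beta\theta)+2\theta\log r\,\cos(\beta\theta)\bigr].
\]
Comparing with the stated formula shows that the proposed $u$ is exactly $a_0\,\Im\{z^{\beta}\}+a_1\,\Im\{z^{\beta}\log z\}+a_2\,\Im\{z^{\beta}\log^{2}z\}$, so harmonicity is immediate; the Dirichlet condition $u(r,0)=0$ is likewise immediate, since each of these imaginary parts vanishes identically at $\theta=0$.

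The substantive step is the boundary condition at $\theta=\omega$. Setting $\theta=\omega$ and collecting the coefficients of the linearly independent functions $r^{\beta}$, $r^{\beta}\log r$ and $r^{\beta}\log^{2}r$ (so that equality with $c_1 r^{\beta}\log r$ must hold power-by-power in $\log r$), I obtain the triangular system
\[
a_2\sin(\beta\omega)=0,\qquad
a_1\sin(\beta\omega)+2a_2\omega\cos(\beta\omega)=c_1,
\]
\[
a_0\sin(\beta\omega)+a_1\omega\cos(\beta\omega)-a_2\omega^{2}\sin(\beta\omega)=0.
\]
Solving in the two regimes finishes the argument. When $\sin(\beta\omega)\neq0$, the first equation forces $a_2=0$, the second gives $a_1=c_1/\sin(\beta\omega)$, and the third gives $a_0=-a_1\omega\cos(\beta\omega)/\sin(\beta\omega)$, reproducing the claimed values. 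When $\sin(\beta\omega)=0$, the first equation is vacuous and $\cos(\beta\omega)=\pm1\neq0$, so the second yields $a_2=c_1/(2\omega\cos(\beta\omega))$; the third collapses to $a_1\omega\cos(\beta\omega)=0$, hence $a_1=0$, while $a_0$ remains free.

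I expect the only delicate point to be the degenerate case $\sin(\beta\omega)=0$: there the single harmonic function $\Im\{z^{\beta}\log z\}$ can no longer produce the required $\log r$ term on $\Gamma_2$, and one is forced to promote the ansatz to include $\Im\{z^{\beta}\log^{2}z\}$, which supplies the missing contribution through its $2\theta\log r\,\cos(\beta\theta)$ part. The residual freedom in $a_0$ is harmless, since the corresponding term $a_0 r^{\beta}\sin(\beta\theta)$ is a homogeneous solution vanishing on $\Gamma_2$ (because $\sin(\beta\omega)=0$), i.e.\ a shifted main term absorbed elsewhere in the asymptotic series, exactly as discussed for the coefficients $\as_{j,k}^{(0)}$ in the main text. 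To assert uniqueness one should restrict to the class of power-logarithmic functions of the displayed form, within which the system above determines $u$ up to that single homogeneous mode; this mirrors the companion Proposition~\ref{AppPrp1}, the present statement differing only by the extra $\log^{2}$ term needed to handle the logarithmic right-hand side.
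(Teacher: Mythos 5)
Your proposal is correct and follows essentially the same route as the paper, which simply records that the coefficients solve the displayed $3\times 3$ upper-triangular system and defers the (straightforward) verification to the analogues of Mghazli's Propositions A1--A3; your derivation of that system by expanding $\Im\{z^{\beta}\log^{l}z\}$ and matching coefficients of $1,\log r,\log^{2}r$ on $\Gamma_{2}$ is exactly the intended check. Your closing remark that uniqueness holds only within the power-logarithmic class, up to the homogeneous mode $r^{\beta}\sin(\beta\theta)$ when $\sin(\beta\omega)=0$, is a correct and useful clarification of what the proposition actually asserts.
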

We note that above coefficients solve the following system
\[\begin{pmatrix}\sin{(\beta\omega)} & \omega\cos{(\beta\omega) } & -\omega^{2}\sin{(\beta\omega)} \\
	0 & \sin{(\beta\omega)}  & 2 \omega \cos{(\beta\omega)}\\
    0 & 0 & \sin{(\beta\omega)}
\end{pmatrix} \begin{pmatrix} a_{0}\\ a_{1} \\ a_{2} \end{pmatrix} = \begin{pmatrix} 0 \\ c_{1} \\ 0  \end{pmatrix}.\]
In addition, it is easy to see that the sum of the last two solutions solves the following system 
\begin{equation}\label{SystPrp3}
	\begin{split}
	\Delta u & = 0 ,\qquad \quad\text{in } \Omega,\\
	u(r,0)  & = 0,\qquad\quad \forall \ r>0, \\
	u(r,\omega) & =   r^{\beta }(c_{0}  + c_{1} \log(r)), \quad \forall \ r>0,
	\end{split}
\end{equation}
in this case $\bm{c} = \left[c_{0}, c_{1},  0 \right]^{\top}$.
Now we generalized the last two propositions.
\begin{prpAppndx}\label{AppPrpn}
	Consider the following system
		 \begin{equation}\label{SystPrpn}
			\begin{split}
			\Delta u & = 0 ,\qquad \quad\text{in } \Omega,\\
			u(r,0)  & = 0,\qquad\quad \forall \ r>0, \\
			u(r,\omega) & =  r^{\beta }\sum_{i = 0 }^{k-1} c_{i}\log(r)^{i}, \quad \forall \ r>0,
			\end{split}
		\end{equation}
		where $\beta$ and $c_{i}$ are real numbers given for $i\in\{0,\ldots, k-1\}$. Then, the solution is   
		 \[u(r,\theta) =  r^{\beta} \sum_{j = 0}^{k} a_{j} \sum_{i = 0}^{j} C_{j-i}^{(i)}\frac{\df^{i} \sin (\beta \theta)}{\df \theta ^{i}}  \theta^{i} (\log(r))^{j-i}  ,\]
	with 
	\[C_{j}^{(i)} = \begin{cases}  1,&  i = 0, \\
									\frac{1}{i!\beta^{i}}\prod_{l = 1}^{i}( j + l), & i\geq 1.
		  	        \end{cases}\]
and the coefficients solve the following system
\[\bm{M}\bm{a} = \bm{c}, \]
where
\[\bm{M} =  \displaystyle{\begin{pmatrix}\sin{(\beta\omega)} & \omega\cos{(\beta\omega) } & \cdots & \cdots & \frac{\omega^{k}}{\beta^{k}} \frac{\df^{k} \sin{(\beta\theta)} }{\df \theta^{k}}|_{\theta= \omega} \\
	0 & \sin{(\beta\omega)}  & 2 \omega \cos{(\beta\omega)} & \cdots & \frac{k\omega^{k-1}}{\beta^{k-1}} \frac{\df^{k-1} \sin{(\beta\theta)} }{\df \theta^{k-1}}|_{\theta= \omega} \\
	0 & 0 & \sin{(\beta\omega)} & \cdots &  \frac{k(k-1)\omega^{k-2}}{2! \beta^{k-2}} \frac{\df^{k-2} \sin{(\beta\theta)} }{\df \theta^{k-2}}|_{\theta= \omega}  \\
    \vdots & \vdots & \vdots & \ddots & \vdots\\
	0 & 0 & 0 & \sin{(\beta\omega)} &  \frac{k \omega }{\beta} \frac{\df \sin{(\beta\theta)} }{\df \theta}|_{\theta= \omega}    \\
	0 & 0 & 0 & 0 & \sin{(\beta\omega)}
\end{pmatrix}}\]
or 
\[\bm{M} = \left[ m_{i,j}\right]_{i = 0, \ldots, k;  j = 0,\ldots, k } =  \begin{cases}
	\frac{1}{i! \beta^{j-i}} \frac{\df^{i}(\theta^{j})}{\df \theta^{i}}|_{\theta=\omega} \frac{\df^{j-i}\sin{(\beta\theta)}}{\df \theta^{j-i
	}}|_{\theta = \omega}, & i \leq j, \\
	0 ,&  i > j,
\end{cases}  \]
$\bm{a}= [a_{0},a_{1},\ldots,a_{k} ]^\top $ and  $\bm{c}= [c_{0},c_{1},\ldots,c_{k-1},0]^{\top}$.  In the case of $\sin{(\beta\omega)}\neq 0 $ we have that $a_{k}=0$  and  $a_{k-1} = \dfrac{c_{k-1}}{\sin{(\beta\omega)}}$. If $\sin{(\beta\omega)} = 0$ the system have infinite solutions with $a_{0}$ arbitrary, and $a_{k} = \dfrac{c_{k-1}}{k\omega\cos{(\beta\omega)}}$.  
\end{prpAppndx}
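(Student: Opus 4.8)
The plan is to recognize the proposed solution as a linear combination of the harmonic building blocks $\Im\{z^{\beta}\log^{j}(z)\}$ and then to reduce the boundary condition on $\Gamma_{2}$ to the linear system $\bm{M}\bm{a}=\bm{c}$. First I would simplify the constants: since $\prod_{l=1}^{i}(j-i+l)=j!/(j-i)!$, one has $C_{j-i}^{(i)}=\binom{j}{i}\beta^{-i}$ for every $i\geq 0$. Combining this with $\frac{\df^{i}}{\df\theta^{i}}\sin(\beta\theta)=\beta^{i}\sin(\beta\theta+\tfrac{\pi}{2}i)$, the product $C_{j-i}^{(i)}\frac{\df^{i}\sin(\beta\theta)}{\df\theta^{i}}$ collapses to $\binom{j}{i}\sin(\beta\theta+\tfrac{\pi}{2}i)$, so the $j$-th inner sum equals
\[
r^{\beta}\sum_{i=0}^{j}\binom{j}{i}(\log r)^{j-i}\theta^{i}\sin\!\big(\beta\theta+\tfrac{\pi}{2}i\big).
\]
Writing $\is^{i}=e^{\is\pi i/2}$ and expanding $(\log r+\is\theta)^{j}$ by the binomial theorem shows this is exactly $\Im\{z^{\beta}\log^{j}(z)\}$ with $z=re^{\is\theta}$. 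Hence the claimed $u$ is $\sum_{j=0}^{k}a_{j}\,\Im\{z^{\beta}\log^{j}(z)\}$.

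With this identification the first two conditions in~\eqref{SystPrpn} are immediate. Each $z^{\beta}\log^{j}(z)$ is holomorphic on the slit plane containing $\Omega$ (choosing the branch with $\arg z\in(0,2\pi)\supset(0,\omega)$), so its imaginary part is harmonic and $u$ is harmonic as a finite linear combination; and at $\theta=0$ the quantity $z^{\beta}\log^{j}(z)=r^{\beta}(\log r)^{j}$ is real, so every block, and therefore $u$, vanishes on $\Gamma_{1}$. The substantive step is the condition at $\theta=\omega$. I would evaluate $\Im\{z^{\beta}\log^{j}(z)\}$ at $\theta=\omega$, collect the coefficient of each power $(\log r)^{i}$, and equate it to $c_{i}$ (with $c_{k}:=0$). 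Setting $i=j-m$ in the binomial expansion, the coefficient of $(\log r)^{i}$ contributed by the $j$-th block is $\binom{j}{i}\omega^{j-i}\sin(\beta\omega+\tfrac{\pi}{2}(j-i))$; a direct check that this equals $\frac{1}{i!\beta^{j-i}}\frac{\df^{i}(\theta^{j})}{\df\theta^{i}}\big|_{\theta=\omega}\frac{\df^{j-i}\sin(\beta\theta)}{\df\theta^{j-i}}\big|_{\theta=\omega}$ recovers the stated entries $m_{i,j}$, and the requirement that the coefficient of the top power $(\log r)^{k}$ vanish supplies the last, null, component of $\bm{c}$. This establishes $\bm{M}\bm{a}=\bm{c}$.

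It remains to solve the system, and the hard part will be only the degenerate branch. The matrix $\bm{M}$ is upper triangular with every diagonal entry equal to $m_{i,i}=\sin(\beta\omega)$. If $\sin(\beta\omega)\neq0$, then $\bm{M}$ is invertible and back-substitution from the bottom row gives $\sin(\beta\omega)\,a_{k}=c_{k}=0$, hence $a_{k}=0$, followed by $\sin(\beta\omega)\,a_{k-1}=c_{k-1}$, i.e. $a_{k-1}=c_{k-1}/\sin(\beta\omega)$. If instead $\sin(\beta\omega)=0$, which forces $\cos(\beta\omega)\neq0$, the diagonal vanishes and I would use that $m_{i,j}=\cos(\beta\omega)\binom{j}{i}\omega^{j-i}\sin(\tfrac{\pi}{2}(j-i))$ is nonzero exactly when $j-i$ is odd; in particular the superdiagonal entries $m_{i,i+1}=(i+1)\omega\cos(\beta\omega)$ are nonzero while the entire first column is zero. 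Consequently $a_{0}$ never appears and is arbitrary, the last row reads $0=0$ consistently with $c_{k}=0$, and back-substitution along the nonzero superdiagonal determines $a_{1},\dots,a_{k}$ uniquely, the bottom nontrivial equation yielding $k\omega\cos(\beta\omega)\,a_{k}=c_{k-1}$, i.e. $a_{k}=c_{k-1}/(k\omega\cos(\beta\omega))$. The main obstacle is precisely this degenerate case: one must verify that the strictly upper-triangular system with its odd-offset sparsity pattern is consistent, so that the even-offset entries never obstruct the back-substitution, and that the remaining freedom reduces to the single parameter $a_{0}$, which corresponds exactly to adding the power-type homogeneous solution $r^{\beta}\sin(\beta\theta)=\Im\{z^{\beta}\}$ of~\eqref{SystPrpn} admissible when $\sin(\beta\omega)=0$.
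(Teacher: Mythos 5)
Your proof is correct and follows the route the paper intends: the paper itself only remarks that the last three propositions are proved as in Mghazli's Propositions A1--A3 and that the proposed solutions are "not difficult to check," i.e.\ a direct verification, which is exactly what you carry out by identifying the building blocks with $\Im\{z^{\beta}\log^{j}(z)\}$ (matching the paper's general ansatz~\eqref{ExpBasica}) and reading off the entries $m_{i,j}$ from the trace at $\theta=\omega$. Your treatment of the degenerate case $\sin(\beta\omega)=0$ — the odd-offset sparsity pattern, consistency of the last row with $c_{k}=0$, and the back-substitution along the nonzero superdiagonal — correctly supplies the details the paper leaves implicit.
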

The proofs of the last three Propositions are similar to those by Mghazli~\cite[Proposition A1, A2 and  A3]{Mghazli1992} in the Appendix. In the present case we must take into account the Dirichlet boundary condition on $\theta = 0$ and $\theta=\omega$,  we also consider $\beta\in \R$, but this is not an issue. In addition, is not difficult to check the proposed solutions.


\bibliographystyle{abbrvnat}
\bibliography{BibliographyComp}   
\end{document}